\documentclass[12pt, a4paper, oneside]{article}
\usepackage{amsmath, amsthm, amssymb, bm, graphicx,  mathrsfs,geometry,color,cite}
\usepackage[title]{appendix} 
\usepackage{lineno} 
\usepackage{booktabs} 
\usepackage{array} 
\usepackage{caption}
\usepackage{titling} 
\usepackage{authblk}
\usepackage{etoolbox} 
\usepackage{subcaption}
\usepackage[hyperfootnotes=false]{hyperref}

\newtheorem{theorem}{Theorem}[section]
\numberwithin{equation}{section}

\newtheorem{lemma}[theorem]{Lemma}
\newtheorem{corollary}[theorem]{Corollary}

\newtheorem{proposition}[theorem]{Proposition}
\newtheorem{remark}[theorem]{Remark}

\renewcommand{\figurename}{Fig.}

\usepackage{titlesec}
\titleformat{\section}
{\fontsize{14}{16}\selectfont\bfseries}
{\thesection}{1em}{}

\pretitle{\begin{center}\fontsize{20}{24}\selectfont\bfseries}      
	\posttitle{\end{center}\vskip 1em}           
\preauthor{\begin{center}\large}             
	\postauthor{\end{center}}                    
\predate{\begin{center}\small}               
	\postdate{\end{center}}                      
\date{}     

\title{\textbf{Principal spectral theory and variational characterizations for nonlocal coupled cooperative systems and applications}}

\author[a]{Xiandong Lin}
\author[b]{Jiazhuo Cheng}
\author[a]{Qiru Wang \thanks{Corresponding Author: Email: mcswqr@mail.sysu.edu.cn}}

\affil[a]{School of Mathematics, Sun Yat-sen University, Guangzhou 510275, Guangdong, P.R. China}
\affil[b]{School of Mathematics and Systems Science, Guangdong Polytechnic Normal University, Guangzhou 510665, Guangdong, P.R. China}

\date{}

\begin{document}
	
	\maketitle
	
	\vspace{-3em}
	
	\begin{abstract}
		This paper investigates the principal spectral theory of a nonlocal dispersal operator with coupled diffusion and aims to establish a variational characterization of the spectral bound for the case where the system is not strongly coupled. In this setting, a key difficulty arises since the principal eigenfunction may have components that are identically zero, rendering existing generalized eigenvalue methods inapplicable. To overcome this, we reorder the components of the operator using a permutation matrix, thereby decomposing it into suitable suboperators, and characterize the spectral bound of the original operator in terms of the spectral bounds of these suboperators. Building on this principal spectral theory, we provide a variational characterization of the basic reproduction ratio for nonlocal dispersal systems and analyze the dynamical behavior of a class of multi‑genotype stem cell regeneration models with epigenetic transitions, both in the presence and absence of gene mutations, without assuming the existence of a principal eigenvalue. Furthermore, we investigate the threshold dynamics when the system is not strongly coupled.
	\end{abstract}
		
\noindent
\textbf{Keywords:}  Nonlocal dispersal problems; coupled diffusion; 
spectral bound;  basic reproduction ratios; variational characterizations.

\noindent
\textbf{2020 MSC:} 45C05; 35K57; 35R20;  92D25.

\section{Introduction}

This paper considers the following nonlocal dispersal operator on $C(\overline\Omega, \mathbb{R}^{n}) $:
\[
\left[ L\phi\right](x) = D(x)\left[ \mathcal{J}\phi\right] (x) + A(x)\phi(x), \quad x\in \overline\Omega,
\]
where
$\Omega\subset \mathbb{R}^{m}$ is a bounded domain with smooth boundary,
$D(x):=(d_{ij}(x))_{n\times n}$ is a nonnegative matrix and $A(x):=(a_{ij}(x))_{n \times n}$ is a cooperative matrix with $d_{ij}, a_{ij} \in C(\overline\Omega, \mathbb{R})$ and $d_{ii}(x)>0$ for each $i, \ j\in \mathcal{S}:=\left\lbrace 1, 2, \dots, n \right\rbrace$, and 
$ \mathcal{J}$ is a operator on $C(\overline\Omega, \mathbb{R}^{n}) $ defined as 
\[
\left[ \mathcal{J}\phi\right] (x):= \left( \int_{\Omega}J_{1}(x,y)\phi_{1}(y)\, \mathrm{d}y, \int_{\Omega}J_{2}(x,y)\phi_{2}(y)\, \mathrm{d}y, \dots, \int_{\Omega}J_{n}(x,y)\phi_{n}(y)\, \mathrm{d}y \right)^{T},  \quad x\in \overline\Omega.
\]
Here 
$J_{i}(x,y)$ is a continuous function of $(x,y)\in\mathbb{R}^{2m} $ with 
\[
J_{i}(x,x)>0, \quad \int_{\mathbb{R}^{m}}J_{i}(y,x)\, \mathrm{d}y=1, \quad  \text{ for all } x\in \mathbb{R}^{m}, i \in \mathcal{S}.
\]

The positive coupling term $D\mathcal{J}$ in $L$ originates from stem cell regeneration models \cite{MR4070771,MR4278592,MR4612706} and can be interpreted as nonlocal cross-diffusion. Spectral properties, especially the principal eigenvalue, play a critical role in the dynamics of such models \cite{MR4612706}. 

In the scalar case $n=1$, $L$ has been widely used to model long-range diffusion and transitions between multiple states. Its principal eigenvalue has been extensively studied in \cite{MR3498523,MR2257732,MR2718672,MR3639154,MR3632209,MR2028048}, for further works on time-periodic case see \cite{MR2436718,MR3000610,MR4267553,MR4104463,MR4620151,MR4781053}. 

For $n>1$,  Lin and Wang \cite{MR4628895} considered the case where $A$ is weakly irreducible and $D$ is a constant diagonal matrix, with  kernels $J_i(x,y)=k_{i}(x-y)$, and obtained  the existence  criteria of the principal eigenvalue via the generalized Krein–Rutman theorem. Further, Zhang \cite{MR4803717} considered the case where $A$ is pointwise irreducible   and studied the partially degenerate case, that is,  $D(x)=\operatorname{diag}\{1,\dots,1,0,\dots,0\}$, allowing nonlocal dispersal to be absent in some components.   Moreover, in the  the time- and space-periodic setting, Bao and Shen \cite{MR3637938} took $D(x)=I$, pointwise irreducible $A(x,t)$, and  kernels $J_i(x,y)=k(x-y)$, and used perturbation theory for positive semigroups to obtain criteria for the principal eigenvalue. Liang et al. \cite{MR3705788} extended these results to the partially degenerate case using the Krein–Rutman theorem. Feng et al. \cite{MR4754238} further investigated the effect of the diffusion rate on the spectral bound in the time-periodic case.

For the case where  $D$ is a positive matrix, the principal spectral theory of $L$ has been studied in \cite{MR4601060,MR4929554}. Su et al. \cite{MR4601060} investigated   the existence, uniqueness, multiplicity, variational characterizations, spectral bound, essential spectrum, the sign relationship of the principal eigenvalue.   Their analysis was carried out under the assumption that  $L$ is either ``strong" in cooperation or ``strong" in the coupling of the nonlocal dispersal operators(i.e., either $D$  or $A$ is weakly irreducible). Subsequently, Wu et al. \cite{MR4929554} extended the principal spectral theory to unbounded domains and, along with Wang and Zhang \cite{MR5005572}, developed a general method for constructing auxiliary operators with principal eigenvalues to study the spectral bound via asymptotic operators.

In this paper, we focus on the variational characterization of the spectral bound of $L$ without assuming that either $D$ or $A$ is weakly irreducible. 
In the special case where $D$ is a constant diagonal matrix and $A$ is weakly irreducible, Lin et al. \cite{lin2026} have already established such variational characterizations. 
In that setting, the weak irreducibility of $A$ guarantees that when $L$ admits a principal eigenvalue, the corresponding principal eigenfunction is strongly positive. This property allows the application of results from \cite{MR3583503,MR3548277,MR4601060,MR5005572} to derive the Collatz--Wielandt characterization of the spectral bound.

However, when we drop the irreducibility assumption on $D$ or $A$,  the principal eigenfunction of $L$ (if it exists) is no longer guaranteed to be strongly positive. Consequently, the definition of generalized principal eigenvalues of $L$ (see, e.g., \cite{MR4612706,MR3583503,MR2718672,MR5005572}) becomes inapplicable, because in this case generalized eigenvalues may fail to coincide with the spectral bound even when a principal eigenvalue exists.

To overcome this difficulty, we rearrange the components of $D$ and $A$ via a suitable permutation to bring them into block lower triangular form, so that the nonlocal dispersal operators corresponding to the diagonal blocks are eventually essentially strongly positive.
This ensures that if a principal eigenvalue exists, its associated principal eigenfunction is strongly positive.  Building on the results of \cite{lin2026}, we can then characterize the spectral bound using the notion of generalized eigenvalues. Moreover, we prove that the spectral bound of the original operator equals the maximum of the spectral bounds of these block operators, thereby obtaining the variational characterizations of the spectral bound (see Section 3 for details).

As  applications,  we first study a class of cooperative systems with nonlocal and coupled diffusion. 
Based on the characterizations of the spectral bound, we establish the variational characterizations of the basic reproduction ratio, which further extends the results in \cite{lin2026}.

Furthermore, we apply the developed spectral theory to investigate the dynamical behavior of a class of multiple‑genotype stem cell regeneration models with epigenetic transitions, both with and without gene mutations. This problem was previously examined by Su et al.~\cite{MR4612706}. They studied the associated eigenvalue problem and, under the assumption that  a principal eigenvalue exists, used principal eigenvalue theory to analyze the system's dynamics. 
Here, we remove their key assumption that guarantees the existence of a principal eigenvalue. This removal better reflects the essential difference between the study of nonlocal dispersal problems and local diffusion problems.  By applying the more general theory of principal eigenvalues developed in Section 3, we still obtain the same spectral conclusions. Therefore, following the same arguments as in \cite{MR4612706}, the dynamical behavior of the system can be established. Moreover, we extend the analysis to the reducible case, which was not considered in \cite{MR4612706}, and study its dynamics. We further present a numerical method for computing the spectral bound, whose sign determines the dynamical behavior of the system.

The remainder of this paper is organized as follows. In Section 2, we recall  some spectral theory for nonlocal dispersal operator which serve as the foundation for our later study of the spectral bound. In Section 3, we establish  the variational characterizations of the spectral bound. In Section 4, we apply these  characterizations to derive  the variational characterizations of the basic reproduction ratio  for nonlocal dispersal systems. In Section 5, we apply the established spectral theory to investigate the dynamical behavior of a class of multiple‑genotype stem cell regeneration models with epigenetic transitions, both with and without gene mutations.

\section{Preliminaries}

In this section, we recall the existence criteria for the principal eigenvalue of the nonlocal dispersal operator and, for the eventually essentially strongly positive case, the variational characterization of its spectral bound. These results will serve as the foundation for our later study of the spectral bound  in the case where it is not eventually essentially strongly positive.

We begin by introducing some notations.
Denote $X:=C(\overline\Omega, \mathbb{R}^{n})$ equipped with the maximum norm and the positive cone $X^{+}:=C(\overline\Omega, \mathbb{R}^{n}_{+})$. We use $\ge$, $>$, and $\gg$ to denote  the order, strict order, and strong order relations induced by the cone $X^{+}$, respectively; that is,
\[
a\ge b \; \text{ if } \; a-b\in X^{+},\qquad 
a> b \; \text{ if } \; a-b\in X^{+}\setminus\{0\},\qquad 
a\gg b \; \text{ if } \; a-b\in \operatorname{Int}(X^{+}).
\]
In particular, when $b=0$, we say that $a$ is  positive, strictly positive, and strongly positive, respectively.

Let $T$ be a bounded linear operator on $ X $. We say that $T$ is  positive if $ TX^{+} \subseteq X^{+} $, strongly positive if $ T( X^{+} \backslash \left\{ 0 \right\} ) \subseteq {\rm{Int}}\left( X^{+} \right) $, and eventually essentially strongly positive  if for every  $ x \in X^{+} $, there exists $  l = l(x) \in \mathbb{Z}^{+} $ such that $ T^{l}x \in \rm{Int}\left( X^{+} \right) \cup \left\lbrace 0\right\rbrace  $.

Denote by $\sigma(T)$  the spectrum of $T$, and by $s(T)$  its spectral bound, i.e.,
\[
s(T) := \sup \{\operatorname{Re} \mu : \mu \in \sigma(T)\}.
\]
Let $\mathcal{R}(T)$ and $\mathcal{N}(T)$ be the range and null space of $T$, respectively.  $T$ is called a Fredholm operator if $\mathcal{R}(T)$ is closed and both 
\[
\dim \mathcal{N}(T) < \infty \quad \text{and} \quad \operatorname{codim} \mathcal{R}(T) := \dim \big( X / \mathcal{R}(T) \big) < \infty.
\]
Its Fredholm index is then given by
\[
\operatorname{ind}(T) = \dim \mathcal{N}(T) - \operatorname{codim} \mathcal{R}(T).
\]
Following \cite[Section 7.5]{MR1861991}, we define the  essential spectrum of $T$ as
\[
\sigma_e(T) := \bigl\{ \lambda \in \sigma(T) : \lambda I - T \text{ is not a Fredholm operator of index zero} \bigr\}.
\]
The spectral radius and the essential spectral radius of $T$ are denoted by $r(T)$ and $r_e(T)$, respectively. Since Nussbaum \cite{MR264434} showed that various definitions of the essential spectral radius are equivalent, we may adopt the following one:
\[ 
{r_e}\left( T\right):= \sup \left\{ {\left| \lambda  \right|:\lambda  \in {\sigma _e}\left( T\right)} \right\}.
\]
We recall a version of the Krein-Rutman theorem for eventually essentially strongly positive operators, which was proved in \cite{MR4628895}.

\begin{lemma}[{\cite[Lemma 2.2]{MR4628895}}]\label{le2.1}
	Let $ (E,E_{+}) $ be an ordered Banach space with ${\rm{Int}}(E_{+}) \ne 0 $, and  $T$ be an eventually essentially strongly positive operator on $E$. If ${r_e}\left( T \right) < r\left( T \right)$, the following statements are valid:
	
	\begin{itemize}
		\item [\rm(i)] There exists $y \in {\rm{Int}}\left( E_{+} \right)$ such that $ Ty= r\left( T \right) y$;
		
		\item [\rm(ii)] $r\left( T \right)$ is an algebraically simple eigenvalue of $ T  $, i.e.
		\[ \dim \mathop  \cup \limits_{i = 1}^\infty  \ker \left( {{{\left( {r\left( T\right)I - T} \right)}^i}} \right) = 1 ; \]
		
		\item [\rm(iii)] If $  v > 0 $ is an eigenvector associated to a nonzero eigenvalue for $ T $, then there exists $ s > 0 $ such that $ v = sy $;
		
		\item [\rm(iv)] $\left| \lambda  \right| < r\left( T \right)$ for any other $ \lambda \in \sigma \left( T \right) $.
	\end{itemize}
\end{lemma}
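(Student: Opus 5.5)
The plan is to reduce to the classical Krein--Rutman theory via the essential spectral radius gap, and then exploit eventual essential strong positivity to upgrade weak conclusions to strong ones.

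\emph{Step 1: existence of a positive eigenvector.} Since $r_e(T)<r(T)$, the part of $\sigma(T)$ outside the disc of radius $r_e(T)$ consists of isolated eigenvalues of finite algebraic multiplicity. Because $T$ is positive and $\mathrm{Int}(E_+)\neq\emptyset$, the cone is normal-enough to apply the Nussbaum/Krein--Rutman result for positive operators with $r_e(T)<r(T)$. This yields $r(T)\in\sigma(T)$ and an eigenvector $y>0$ with $Ty=r(T)y$. Note that $r(T)>0$ is automatic here, since $r(T)>r_e(T)\ge0$.

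\emph{Step 2: upgrading to (i) and (iii).} Let $v>0$ with $Tv=\lambda v$ and $\lambda\neq0$. By eventual essential strong positivity there is $l$ with $T^lv=\lambda^lv\in\mathrm{Int}(E_+)\cup\{0\}$. Since $\lambda^lv\neq0$, we get $\lambda^l v\in\mathrm{Int}(E_+)$, and because $v>0$ this forces $\lambda^l>0$ and $v\in\mathrm{Int}(E_+)$. In particular $y\gg0$, which proves (i).

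For (iii), take $v>0$ an eigenvector for $\lambda\neq0$, so $v\gg0$. Use the dual cone argument: pick $y^*\in E_+^*$ with $T^*y^*=r(T)y^*$, which exists by the Krein--Rutman theorem applied to $T^*$ using the gap. Then $y^*(v)>0$, and $\lambda y^*(v)=r(T)y^*(v)$ gives $\lambda=r(T)$. Next set $s=\sup\{t: y-tv\ge0\}$, which is finite and positive since both vectors are interior. Then $w=y-sv\ge0$ lies on the boundary of the cone. If $w\neq0$, it is a positive eigenvector, hence interior by the argument above, contradicting maximality of $s$. So $y=sv$, and rescaling gives (iii).

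\emph{Step 3: simplicity (ii).} The kernel of $r(T)I-T$ is one-dimensional: for any real eigenvector $u$, consider $y-tu$ with $t$ chosen so that it hits $\partial E_+$, and apply the same boundary argument. To show $(r(T)I-T)^2u=0$ implies $u\in\mathrm{span}\{y\}$, write $(T-r(T))u=cy$. Applying $y^*$ gives $c\,y^*(y)=0$, and since $y^*(y)>0$ we get $c=0$.

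\emph{Step 4: peripheral spectrum (iv); the main obstacle.} Suppose $|\lambda|=r(T)$ and $\lambda\neq r(T)$ with $Tu=\lambda u$, $u$ complex. Strong positivity arguments normally use the modulus $|u|$, but $E$ is an abstract ordered space. I would therefore follow the approach of the cited lemma, which is essentially the spectral-radius/peripheral argument of Nussbaum. First normalize $r(T)=1$ and replace $T$ by the iterate $T^{l}$, which is strongly positive only pointwise-eventually; using finite-dimensionality of the peripheral spectral subspace, a Baire argument gives a single uniform $l$ on that finite-dimensional cone section. Next, on the finite-dimensional invariant subspace $P E$, where $P$ is the peripheral spectral projection, the restriction is a nonnegative matrix-like operator preserving the cone $PE_+$, and it is strongly positive after iteration. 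Perron--Frobenius for primitive operators on finite-dimensional cones then excludes other peripheral eigenvalues. The delicate point is verifying that $PE_+$ is a cone with interior mapped into itself appropriately, and obtaining the uniform iterate; I expect this to be the hardest step.
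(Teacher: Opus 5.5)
Your Steps 1--3 are essentially right, but Step 4 is a plan rather than a proof. The two points you flag as delicate are exactly what has to be established, and one of them is mis-posed.

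The cone to use is $K:=F\cap E_+$ with $F:=PE$, not $P(E_+)$. At this stage $P$ is not known to be positive, and $P(E_+)$ need not be a closed pointed cone, so Perron--Frobenius cannot be applied to it. With $K$ your argument closes as follows.

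\begin{itemize}
\item Because $r:=r(T)>r_e(T)$, the peripheral spectrum is a finite, conjugation-invariant set of eigenvalues of finite algebraic multiplicity. Hence $P$ is real, $F$ is finite-dimensional and $T$-invariant, and $T|_F$ is invertible.
\item $K$ is solid in $F$, since $y\in F\cap\mathrm{Int}(E_+)$.
\item $T$ maps $\mathrm{Int}(E_+)$ into itself, because $z\ge\delta y$ gives $Tz\ge\delta r y$. So the relatively open sets $U_l:=\{x\in F: T^lx\in\mathrm{Int}(E_+)\}$ increase with $l$.
\item Since $T^lx\neq0$ for $x\in F\setminus\{0\}$ (this disposes of the alternative $T^lx=0$), eventual essential strong positivity says the $U_l$ cover the compact set $\{x\in K:\|x\|=1\}$. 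A finite subcover (this is compactness; no Baire argument is needed) gives a single $L$ with $T^L(K\setminus\{0\})\subseteq\mathrm{Int}(E_+)\cap F$, which lies in the relative interior of $K$.
\item Apply the finite-dimensional Krein--Rutman theorem for maps sending $K\setminus\{0\}$ into the interior of $K$ (Vandergraft) to $T^L|_F$. It says $r^L$ is the only eigenvalue of modulus $r^L$ and is algebraically simple.
\item Every eigenvalue of $T^L|_F$ has the form $\lambda^L$ with $|\lambda|=r$. This forces $\dim F=1$, i.e. $F=\mathrm{span}\{y\}$, which yields (iv) and, as a by-product, (ii).
\end{itemize}

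Equivalently, you can run the classical argument with $w_\phi=\operatorname{Re}(e^{i\phi}u)$ and $t_0=\sup\{t: y-tw_\phi\ge0 \text{ for all }\phi\}$. The same compactness argument gives a uniform iterate on the compact family $\{y-t_0w_\phi\}$.

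There are two smaller repairs in Steps 1--3.

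First, the hypothesis of Nussbaum's Krein--Rutman corollary, which the paper quotes, is that the cone be total. This follows from $\mathrm{Int}(E_+)\neq\emptyset$; normality plays no role.

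Second, and more importantly, the functional $y^*$ is not justified by applying Krein--Rutman to $T^*$. That would require $E_+^*$ to be total in $E^*$. This holds for normal cones such as $C(\overline\Omega,\mathbb{R}^n_+)$, but it can fail for a solid non-normal cone. You have two ways around this:
\begin{itemize}
\item Construct $y^*$ from the leading Laurent coefficient $Q=\lim_{\mu\downarrow r}(\mu-r)^k(\mu-T)^{-1}\ge0$ of the resolvent at the pole $r$. Then $TQ=QT=rQ$, and $y^*:=Q^*f$ is nonzero for a suitable $f\in E_+^*$.
\item Avoid duality altogether. In (iii), $c_1y\le v\le c_2y$ together with $T^nv=\lambda^nv$ gives $\lambda=r$. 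In (ii), suppose $(T-r)u=cy$ with $c>0$. Then $T^nu=r^nu+nr^{n-1}cy$, combined with $-\kappa y\le u\le\kappa y$, yields $(2\kappa-nc/r)y\in E_+$. This is impossible for large $n$.
\end{itemize}
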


The matrices $D(x)$ and $A(x)$ are said to be weakly irreducible in the sense that the index set $\mathcal{S}$ cannot be  split into two disjoint nonempty subsets $\mathcal{I}$ and $\mathcal{J}$ such that,  $d_{ij}(x) \equiv 0$ (resp. $a_{ij}(x) \equiv 0$) for all $i \in \mathcal{I}$ and $j \in \mathcal{J}$  on $\overline{\Omega}$.

\begin{proposition}
	If $D(x)$ or $A(x)$  is  weakly irreducible, then there exists  $c_{0}$ such that $L+c_{0}I$ is an eventually essentially strongly positive operator.
\end{proposition}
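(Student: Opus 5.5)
We choose $c_0$ so large that $a_{ii}(x)+c_0>0$ on $\overline\Omega$ for every $i\in\mathcal S$. Set $T:=L+c_0I$. Then $T=D\mathcal J+(A+c_0I)$ is the sum of two positive operators. Indeed, $D\mathcal J$ is positive because $d_{ij}\ge0$ and $J_i\ge0$ near the diagonal; here we assume, as is implicit in the setting, that $J_i\ge 0$. The operator $A+c_0I$ is positive because its off-diagonal entries are nonnegative (cooperativity) and its diagonal entries are positive. Hence $T^l\phi\ge0$ for all $\phi\in X^+$ and $l\ge1$, and every term in the binomial expansion of $T^l$ is a positive operator. We must show that for every $\phi\in X^+\setminus\{0\}$ some power $T^l\phi$ is strongly positive, or else vanishes. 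Since $T^l\phi$ dominates each of its expansion terms, it suffices to find one product of factors $D\mathcal J$ and $(A+c_0I)$ that maps $\phi$ into $\operatorname{Int}(X^+)$.

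\textbf{Step 1 (spatial spreading within one component).} Since $d_{ii}(x)>0$ and $J_i(x,x)>0$, the diagonal part of $D\mathcal J$ alone spreads positivity in space. If $\phi_i(x_0)>0$ for some $x_0$, then $J_i(x,y)>0$ for $|x-y|<\delta$ by continuity and compactness. Consequently $[(D\mathcal J)\phi]_i\ge d_{ii}[\mathcal J\phi]_i$ is positive on a neighbourhood of $x_0$ of radius about $\delta$. Because $\overline\Omega$ is connected and bounded (a smooth domain), a chaining argument shows that after finitely many applications, $N$ say, component $i$ is positive on all of $\overline\Omega$. The number $N$ depends only on $\delta$ and the geometry of $\Omega$. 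Once component $i$ is everywhere positive, it remains so under further applications of either factor: $(A+c_0I)$ keeps $(a_{ii}+c_0)\psi_i>0$, and $D\mathcal J$ keeps $d_{ii}[\mathcal J\psi]_i>0$. I expect this chaining to be the main technical obstacle. The delicate point is near $\partial\Omega$, where one must use that $\Omega$ is open and connected, so that any two points of $\overline\Omega$ are joined by a chain of $\delta/2$-balls centred in $\Omega$, each meeting $\Omega$ in a set of positive measure.

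\textbf{Step 2 (transfer between components).} Suppose first that $D$ is weakly irreducible. Component $j$ is everywhere positive after Step 1. If $d_{ij}\not\equiv0$, then $[(D\mathcal J)\psi]_i\ge d_{ij}[\mathcal J\psi]_j$, which is positive wherever $d_{ij}>0$, i.e.\ on a nonempty open subset of $\overline\Omega$. Step 1 then spreads component $i$ to all of $\overline\Omega$. The case where $A$ is weakly irreducible is the same, using $[(A+c_0I)\psi]_i\ge a_{ij}\psi_j$. Let $\mathcal I$ be the set of indices that eventually become everywhere positive. It is nonempty because $\phi\neq0$. If some index lay outside $\mathcal I$, weak irreducibility would give $i\notin\mathcal I$ and $j\in\mathcal I$ with $d_{ij}\not\equiv0$ (respectively $a_{ij}\not\equiv0$), and the argument above would put $i$ in $\mathcal I$, a contradiction. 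Hence every component becomes positive after at most $n$ transfer rounds.

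\textbf{Conclusion.} Combining the two steps, each round consists of at most $N$ applications of $D\mathcal J$ and at most one of $(A+c_0I)$, and there are at most $n$ rounds. So the total exponent $l\le n(N+1)$ is finite and uniform in $\phi$. For this $l$ we obtain $T^l\phi\in\operatorname{Int}(X^+)$, since all components are positive continuous functions on the compact set $\overline\Omega$. This proves that $L+c_0I$ is eventually essentially strongly positive.
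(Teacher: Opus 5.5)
Your proof is correct and follows the route the paper indicates. The paper chooses $c_0$ so that $L+c_0I$ is positive and then, citing the analogous argument of Lin and Wang (the same spreading step it uses in the proof of Proposition~\ref{pr3.1}), spreads positivity in space within each component via $d_{ii}>0$ and $J_i(x,x)>0$ and across components via weak irreducibility of $D$ or $A$; you write out in full the details the paper leaves to that reference, and your argument also gives an exponent $l$ independent of $\phi$ and covers the case where only $D+A$ is weakly irreducible.
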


\begin{proof}
	Clearly, there exists a constant $c_0>0$ such that $L+c_0I$ is positive. Moreover, by an argument analogous to that in the proof of \cite[Proposition 2.1]{MR4628895}, one can show that $L + c_0 I$ is eventually essentially strongly positive.
\end{proof}

We remark that  $D(x)$ or $A(x)$  is  weakly irreducible if and only if $D(x)+A(x)$  is  weakly irreducible. And if $A(x)$ is a nonnegative matrix, then we can simply take $c_0 = 0$.

Since $D\mathcal{J}$ is a compact operator on $X$, according to  \cite[Theorem 7.26]{MR1861991} and \cite[Proposition 2.7]{MR3906242}, we have $\sigma_{e}(L)=\bigcup_{x \in \overline\Omega}  \sigma(A(x))$. Note that we can choose a $c_{0}$ such that $A(x)+c_{0}I$ is a nonnegative matrix and $L+c_{0}I$ is positive. According to \cite[p.276]{MR1921782}, we have $s(L+c_{0}I) =r(L+c_{0}I)$ and $s(A(x)+c_{0}I)=r(A(x)+c_{0}I)$. Clearly, $r(L+c_{0}I)= s(L)+c_{0} $ and $r(A(x)+c_{0}I)= \max\limits_{x\in \overline{\Omega}}s(A(x))+c_{0}$. 

$L$ is said to admit a principal eigenvalue if $s(L)$ is an eigenvalue with a positive eigenfunction. According to Krein-Rutman theorem (see \cite[Corollary 2.2]{MR643014}) and Lemma \ref{le2.1}, we have the following results.

\begin{proposition}\label{pr2.1}
	$L$  admits the principal eigenvalue if $s(L)> \max\limits_{x \in \overline{\Omega}}s(A(x)) $.
\end{proposition}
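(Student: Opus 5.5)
Set $T:=L+c_0I$, where $c_0>0$ is chosen so that $A(x)+c_0I$ is a nonnegative matrix for every $x\in\overline\Omega$; then $T$ is a positive operator on $X$. As recalled above, $s(T)=r(T)=s(L)+c_0$. We now identify the essential spectral radius of $T$. Write $T=D\mathcal{J}+M$, where $M$ is the multiplication operator $[M\phi](x)=(A(x)+c_0I)\phi(x)$. Since $D\mathcal{J}$ is compact, $\sigma_e(T)=\sigma_e(M)=\bigcup_{x\in\overline\Omega}\sigma(A(x)+c_0I)$. Each matrix $A(x)+c_0I$ is nonnegative, so by Perron--Frobenius its spectral radius equals its spectral bound, namely $s(A(x))+c_0$. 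Taking the supremum over the compact set $\overline\Omega$ and using continuity of the spectral radius of matrices gives
\[
r_e(T)=\max_{x\in\overline\Omega}s(A(x))+c_0 .
\]
Hence the hypothesis $s(L)>\max_{x\in\overline\Omega}s(A(x))$ is exactly the condition $r_e(T)<r(T)$.

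Next we apply the Krein--Rutman theorem for positive operators with $r_e(T)<r(T)$, in the form \cite[Corollary 2.2]{MR643014}. It yields an eigenvector $\phi>0$ with $T\phi=r(T)\phi$. This gives $L\phi=(r(T)-c_0)\phi=s(L)\phi$, so $s(L)$ is an eigenvalue of $L$ with a positive eigenfunction. By definition, $L$ admits the principal eigenvalue. The eigenfunction $\phi$ need only be positive here, not strongly positive, so Nussbaum's theorem in the form above suffices and no irreducibility of $D$ or $A$ is required.

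If in addition $D$ or $A$ is weakly irreducible, the earlier proposition shows that $T$ is eventually essentially strongly positive. Lemma~\ref{le2.1} then upgrades the conclusion: the eigenfunction can be taken in $\operatorname{Int}(X^+)$, and $s(L)$ is algebraically simple.

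The main obstacle is justifying $r_e(T)=\max_{x}s(A(x))+c_0$. The identity $\sigma_e(T)=\bigcup_x\sigma(A(x)+c_0I)$ only gives $r_e(T)=\max_x r(A(x)+c_0I)$, and we must then equate this with $\max_x s(A(x))+c_0$. We do so by invoking Perron--Frobenius pointwise, $r(A(x)+c_0I)=s(A(x)+c_0I)$, together with continuity of $x\mapsto A(x)$ on the compact set $\overline\Omega$, which guarantees the maximum is attained.
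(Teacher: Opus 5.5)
Your proposal is correct and follows essentially the same route as the paper: shift to the positive operator $L+c_0I$, use compactness of $D\mathcal{J}$ and Perron--Frobenius to get $r_e(L+c_0I)=\max_{x\in\overline\Omega}s(A(x))+c_0<s(L)+c_0=r(L+c_0I)$, and then apply Nussbaum's Krein--Rutman corollary \cite[Corollary 2.2]{MR643014} to obtain a positive eigenfunction for $s(L)$. Your step of applying $r(A(x)+c_0I)=s(A(x))+c_0$ pointwise before taking the maximum states more cleanly the step the paper writes loosely.
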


\begin{theorem}\label{th2.1}
Assume that there exists  $c_{0}$ such that $L+c_{0}I$ is an eventually essentially strongly positive operator. If $s(L)> \max\limits_{x\in \overline{\Omega}}s(A(x))$, then 
\begin{itemize}
	\item [\rm(i)] There exists $y \in {\rm{Int}}\left( X^{+} \right)$ such that $ Ly= s\left(L \right) y$;
	
	\item [\rm(ii)] $s\left( L\right)$ is an algebraically simple eigenvalue of $ L  $, i.e.
	\[ \dim \mathop  \cup \limits_{i = 1}^\infty  \ker \left( {{{\left( {s\left( L\right)I - L} \right)}^i}} \right) = 1 ; \]
	
	\item [\rm(iii)] If $  v > 0 $ is an eigenvector associated to a nonzero eigenvalue for $ L$, then there exists $ l > 0 $ such that $ v = ly $;
	
	\item [\rm(iv)] $\left| \lambda  \right| < s\left( L \right)$ for any other $ \lambda \in \sigma \left( L\right) $.
\end{itemize}
\end{theorem}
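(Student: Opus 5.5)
The plan is to reduce the statement to Lemma \ref{le2.1} by shifting $L$ by a large constant. I will pick $c\ge c_0$ so large that $A(x)+cI$ is a nonnegative matrix for every $x\in\overline\Omega$. Then $T:=L+cI$ is positive. Since $\sigma(T)=\sigma(L)+c$, we have $r(T)=s(T)=s(L)+c$ by the positivity remark preceding Proposition \ref{pr2.1}. The eigenvectors of $T$ and $L$ coincide, with eigenvalues shifted by $c$, and the generalized eigenspaces coincide as well. So (i)--(iii) for $L$ are exactly (i)--(iii) of Lemma \ref{le2.1} for $T$, once its two hypotheses are checked. Item (iv) will come out as $|\lambda+c|<s(L)+c$ for every other $\lambda\in\sigma(L)$, which forces $\operatorname{Re}\lambda<s(L)$; this is the form in which I will read and prove (iv) for $L$.

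\emph{Step 1: the essential spectral gap.} Because $D\mathcal J$ is compact, $\sigma_e(T)=\sigma_e(L)+c=\bigcup_{x\in\overline\Omega}\sigma(A(x)+cI)$. For each $x$, the matrix $A(x)+cI$ is nonnegative, so by Perron--Frobenius every eigenvalue $\mu$ satisfies $|\mu|\le r(A(x)+cI)=s(A(x))+c$. Hence
\[
r_e(T)\le \max_{x\in\overline\Omega}s(A(x))+c< s(L)+c=r(T),
\]
and the strict inequality is exactly the hypothesis $s(L)>\max_{x\in\overline\Omega}s(A(x))$.

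\emph{Step 2: eventual essential strong positivity of $T$.} This is the step I expect to be the main obstacle. The hypothesis only gives this property for $T_0:=L+c_0I$, and I may need $c>c_0$, in which case the property does not transfer automatically. Write $T=T_0+\delta I$ with $\delta=c-c_0\ge 0$, and enlarge $c_0$ at the outset if necessary so that $T_0$ is itself positive.

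\begin{itemize}
\item First I will show that $T_0y\neq 0$ for every $y>0$. Indeed, $T_0y\ge D\mathcal J y$. If $y_i\not\equiv 0$, then $d_{ii}>0$ and $J_i(x,x)>0$ give $\int_\Omega J_i(x,z)y_i(z)\,dz>0$ for $x$ near a point where $y_i>0$.
\item Consequently $T_0^l x\neq0$ for all $l$ whenever $x>0$, so the definition yields some $l$ with $T_0^l x\in\operatorname{Int}(X^+)$.
\item Expanding $(T_0+\delta I)^l$ binomially into nonnegative operators gives $T^l x\ge T_0^l x\gg 0$. Thus $T$ is eventually essentially strongly positive.
\end{itemize}

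\emph{Step 3: conclusion.} Lemma \ref{le2.1} now applies to $T$ and yields $y\in\operatorname{Int}(X^+)$ with $Ty=(s(L)+c)y$, that is, $Ly=s(L)y$, which is (i). It also yields algebraic simplicity, which is (ii), because $\ker\bigl((s(L)I-L)^i\bigr)=\ker\bigl((r(T)I-T)^i\bigr)$. For (iii), a positive eigenvector of $L$ for an eigenvalue $\lambda$ is a positive eigenvector of $T$ for $\lambda+c$. Lemma \ref{le2.1}(iii) then gives that it is a positive multiple of $y$, provided $\lambda+c\neq0$; this can be arranged by taking $c$ larger if needed, at no cost to Steps 1 and 2. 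Finally, Lemma \ref{le2.1}(iv) gives the dominance in the shifted form stated above.
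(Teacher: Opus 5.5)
Your proposal is correct and follows essentially the paper's own argument. The paper likewise shifts $L$ by a constant $c_0$ that makes $A(x)+c_0I$ nonnegative and $L+c_0I$ positive, uses $\sigma_e(L)=\bigcup_{x\in\overline\Omega}\sigma(A(x))$ to get $r_e(L+c_0I)<r(L+c_0I)=s(L)+c_0$, and then invokes Lemma~\ref{le2.1}, tacitly taking the same $c_0$ for positivity and for eventual essential strong positivity (your Step 2 supplies the transfer to larger shifts that it omits).

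Your reading of (iv) as $|\lambda+c|<s(L)+c$, hence $\operatorname{Re}\lambda<s(L)$, is the right one, since the literal $|\lambda|<s(L)$ fails whenever $s(L)\le 0$. Two small simplifications apply. Positivity of $L+c_0I$ should be read as part of the hypothesis, not obtained by enlarging $c_0$, which would forfeit the assumption. And $\lambda+c\neq0$ in (iii) is automatic, since $Tv\ge D\mathcal{J}v\neq0$ for $v>0$.
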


By a similarly arguments as in \cite[Theorem 2.3]{MR4628895} and \cite[Lemma 2.5]{lin2026}, we have the following results.

\begin{proposition}
	The following statements hold:
	\begin{itemize}
		\item [\rm(i)]  Assume that there exists  $c_{0}$ such that $L+c_{0}I$ is an eventually essentially strongly positive operator. Then $ s(L) $ is the principal eigenvalue of $ L $ if and only if $ s(L)> \mathop  {\max} \limits_{x\in\overline{\Omega}} s(A(x)) $;
		
		\item [\rm(ii)] 	If there exists an open set $\Omega_{0}\subset \Omega$ such that $s(A(x))=  \max\limits_{x \in \overline{\Omega}}s(A(x) )$ for all $x\in \Omega_{0}$, then $s(L)> \max\limits_{x \in \overline{\Omega}}s(A(x) )$. 
	\end{itemize}
\end{proposition}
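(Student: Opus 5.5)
For part (i), the reverse implication is Theorem \ref{th2.1}(i) (or Proposition \ref{pr2.1}). For the forward implication, we argue by contradiction. Assume $s(L)$ is the principal eigenvalue, so $L\phi=s(L)\phi$ for some $\phi>0$, and assume $s(L)\le \max_{x}s(A(x))$. Since $\sigma_e(L)=\bigcup_x\sigma(A(x))$, we have $s(L)\ge \max_x s(A(x))$, hence $s(L)=\max_x s(A(x))=:\lambda_0$.

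Set $T=L+c_0I$, which is eventually essentially strongly positive. Then $T\phi=(\lambda_0+c_0)\phi$, and iterating gives $T^l\phi=(\lambda_0+c_0)^l\phi\in\mathrm{Int}(X^+)$ for some $l$. Hence $\phi\gg0$; if $\lambda_0+c_0=0$ this forces $\phi=0$, a contradiction, so we may take $c_0$ large. Now choose $x_0$ with $s(A(x_0))=\lambda_0$, and let $\psi>0$ be a left Perron vector of the cooperative matrix $A(x_0)$, so $\psi^TA(x_0)=\lambda_0\psi^T$. Evaluating the eigen-equation at $x_0$ and pairing with $\psi$ gives
\[
\psi^T D(x_0)[\mathcal J\phi](x_0)+\lambda_0\psi^T\phi(x_0)=\lambda_0\psi^T\phi(x_0),
\]
so $\psi^TD(x_0)[\mathcal J\phi](x_0)=0$. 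However, $d_{ii}(x_0)>0$, $J_i(x_0,x_0)>0$ and $\phi\gg0$ give $[\mathcal J\phi]_i(x_0)>0$ for every $i$. Hence $D(x_0)[\mathcal J\phi](x_0)\gg0$ componentwise, and since $\psi>0$ the pairing is strictly positive. This is a contradiction.

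The subtle point is when $x_0\in\partial\Omega$, where $J_i(x_0,\cdot)$ is positive only near $x_0$. Since $\Omega$ is a smooth domain, the set $B(x_0,r)\cap\Omega$ has positive measure, so the integral is still positive.

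For part (ii), the plan is a test-function/generalized eigenvalue argument. Again set $\lambda_0=\max s(A)$ and suppose $s(L)=\lambda_0$. We reduce to a scalar-type comparison on $\Omega_0$. Take a nonnegative bump $\eta\in C_c(\Omega_0)$, $\eta\not\equiv0$, and vectors built from Perron vectors of $A(x)$. The main obstacle is continuity of these eigenvectors, which is delicate if $A(x)$ is reducible. Instead, I would use the adjoint approach. Suppose $\lambda>\lambda_0$ is close to $\lambda_0$. Since $s(L)=\lambda_0<\lambda$, the resolvent $(\lambda-L)^{-1}$ is positive. Moreover,
\[
(\lambda-L)^{-1}\ge(\lambda-A)^{-1}+(\lambda-A)^{-1}D\mathcal J(\lambda-A)^{-1},
\]
from the Neumann series of $(\lambda-A)^{-1}$ perturbed by $D\mathcal J$. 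On $\Omega_0$, the quantity $\|(\lambda-A(x))^{-1}\|\ge(\lambda-\lambda_0)^{-1}$ blows up uniformly as $\lambda\downarrow\lambda_0$. The double term then contains roughly $(\lambda-\lambda_0)^{-2}$ times the integral of $J$ over a set of positive measure. Iterating the Neumann series $k$ times gives growth like $c^k(\lambda-\lambda_0)^{-k-1}$. This contradicts the bound $\|(\lambda-L)^{-1}\|\le C/\mathrm{dist}(\lambda,\sigma(L))$ only if sharpened. The cleaner route, following \cite[Lemma 2.5]{lin2026}, is to show $r\big(D\mathcal J(\lambda-A)^{-1}\big)\to\infty$ as $\lambda\downarrow\lambda_0$, using the $\Omega_0$ blow-up and the positivity of $J$ near the diagonal. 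Then for some $\lambda>\lambda_0$ this spectral radius equals $1$, and the standard Birman–Schwinger correspondence yields an eigenvalue $\lambda>\lambda_0$ of $L$. This gives $s(L)>\lambda_0$.
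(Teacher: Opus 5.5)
Your part (i) is essentially correct: pairing the eigen-equation at a maximizer $x_0$ of $s(A(\cdot))$ with a left Perron vector $\psi$ of $A(x_0)$ gives $\psi^T D(x_0)[\mathcal J\phi](x_0)=0$, which is impossible once $\phi\gg 0$. There is one slip. Eventual essential strong positivity only gives $T^l\phi\in\mathrm{Int}(X^+)\cup\{0\}$, so it does not exclude $\lambda_0+c_0=0$. Exclude that case instead by noting $(L+c_0I)\phi\ge D\mathcal J\phi>0$ for $\phi>0$, using $d_{jj}>0$ and $J_j(x,x)>0$. This forces $\lambda_0+c_0>0$, and then $\phi\gg0$ follows. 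The paper gives no argument of its own for either part; it defers to \cite[Theorem 2.3]{MR4628895} and \cite[Lemma 2.5]{lin2026}.

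Part (ii) has a genuine gap. The decisive claim, that $r(D\mathcal J(\lambda-A)^{-1})\to\infty$ as $\lambda\downarrow\lambda_0$ ``using the $\Omega_0$ blow-up'', is asserted rather than proved. It also does not get around the obstacle you flagged yourself. The bound $\|(\lambda-A(x))^{-1}\|\ge(\lambda-\lambda_0)^{-1}$ only says that each $(\lambda-A(x))^{-1}$ is large along some Perron direction of $A(x)$. When $A(x)$ is reducible, that direction can sit on different components at different $x$ and need not vary continuously. A lower bound on the spectral radius of the positive operator $(\lambda-A)^{-1}D\mathcal J$ needs a \emph{fixed} $\phi>0$ with $(\lambda-A)^{-1}D\mathcal J\phi\ge c(\lambda)\phi$. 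That is exactly a continuous nonnegative sub-eigenvector of $A(x)$ for $\lambda_0$ on a ball of fixed size, which is the thing you set out to avoid. Your resolvent-norm attempt also fails as stated: there is no bound $\|(\lambda-L)^{-1}\|\le C/\mathrm{dist}(\lambda,\sigma(L))$ for non-normal $L$.

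The missing construction is short.
\begin{enumerate}
\item Choose $x_1\in\Omega_0$ maximizing the number of positive off-diagonal entries of $A(x)$ over $\Omega_0$. By continuity the off-diagonal zero pattern is then constant on a ball $B_1\subset\Omega_0$. Hence a single permutation puts every $A(x)$, $x\in B_1$, into Frobenius normal form with irreducible diagonal blocks $A_{pp}(x)$.
\item The finitely many relatively closed sets $\{x\in B_1: s(A_{pp}(x))=\lambda_0\}$ cover $B_1$, so one of them contains a ball $B_2$. On $B_2$ the normalized Perron vector $w_p(x)\gg0$ of $A_{pp}(x)$ is continuous, since the eigenvalue is simple.
\item Extend $w_p$ by zero to $w(x)\in\mathbb R^n_+$. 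Cooperativity then gives $A(x)w(x)\ge\lambda_0 w(x)$ on $B_2$.
\item Let $\phi=\eta w$ with $0\le\eta\in C_c(B_3)$, $\eta\not\equiv0$, where $B_3\subset B_2$ is small enough that $J_j\ge\delta>0$ on $B_3\times B_3$ for all $j$. Then $D\mathcal J\phi\ge\epsilon\phi$ for some $\epsilon>0$, so $L\phi\ge(\lambda_0+\epsilon)\phi$ on $\overline\Omega$ with $\phi>0$.
\item For $c_0$ large, $(L+c_0I)^k\phi\ge(\lambda_0+\epsilon+c_0)^k\phi$. Gelfand's formula then gives $s(L)\ge\lambda_0+\epsilon$.
\end{enumerate}
The same $\phi$ yields $(\lambda-A)^{-1}D\mathcal J\phi\ge\frac{\epsilon}{\lambda-\lambda_0}\phi$. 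So your Birman--Schwinger route would also close, but once $\phi$ is in hand it is superfluous.
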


\begin{lemma}[{\cite[Theorem 2.6]{lin2026}}]\label{le2.2}
Assume that there exists  $c_{0}$ such that $L+c_{0}I$ is an eventually essentially strongly positive operator. Then
\[
\begin{aligned}
	s(L) =& \inf \left\lbrace \lambda\in \mathbb{R}: \exists \phi \in \mathrm{Int}(X^{+}) \ s.t. \ \left[ L\phi\right](x)\le \lambda \phi(x), \forall x\in \overline{\Omega}  \right\rbrace\\[0.5em]
	=& \inf_{\phi \in \mathrm{Int}(X^{+})} \sup_{x \in \Omega, i \in \mathcal{S}}  \frac{\sum_{j=1}^n d_{ij}(x) \int_{\Omega} J_j(x- y) \phi_j(y) \,\mathrm{d}y + \sum_{j=1}^n a_{ij}(x)\phi_{j}(x) }{\phi_i(x)}\\[0.5em]
	=&\sup \left\lbrace \lambda\in \mathbb{R}: \exists \phi \in \mathrm{Int}(X^{+}) \ s.t. \ \left[ L\phi\right](x)\ge \lambda \phi(x), \forall x\in \overline{\Omega}   \right\rbrace\\[0.5em]
	=& \sup_{\phi \in  \mathrm{Int}(X^{+})} \inf_{x \in \Omega, i \in \mathcal{S}}  \frac{\sum_{j=1}^n d_{ij}(x) \int_{\Omega} J_j(x- y) \phi_j(y) \,\mathrm{d}y + \sum_{j=1}^n a_{ij}(x)\phi_{j}(x) }{\phi_i(x)}.
\end{aligned}
\]		
\end{lemma}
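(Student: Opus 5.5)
We would prove the characterization by comparing $s(L)$ with the two quantities
\[
\lambda_1:=\inf\{\lambda:\exists\phi\gg0,\ L\phi\le\lambda\phi\},\qquad \lambda_2:=\sup\{\lambda:\exists\phi\gg0,\ L\phi\ge\lambda\phi\}.
\]
The plan is to show $\lambda_2\le s(L)\le\lambda_1$ and then $\lambda_1\le s(L)\le\lambda_2$. The min-max quotient forms then follow at once. Indeed, for fixed $\phi\gg0$, the quantity $\sup_{x,i}$ of the quotient is the least $\lambda$ with $L\phi\le\lambda\phi$, and the $\inf_{x,i}$ of the quotient is the largest $\lambda$ with $L\phi\ge\lambda\phi$. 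Taking the supremum over $\Omega$ instead of $\overline\Omega$ changes nothing, by continuity. Throughout we fix $c_0$ large enough that $L+c_0I$ is positive, and we assume all $\lambda$ under consideration satisfy $\lambda+c_0>0$.

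\textbf{Easy inequalities.} Suppose $\phi\gg0$ and $L\phi\ge\lambda\phi$. Then positivity gives $(L+c_0I)^k\phi\ge(\lambda+c_0)^k\phi$ for all $k$. Since the cone is normal, this yields $\|(L+c_0I)^k\|\ge c(\lambda+c_0)^k$, hence $r(L+c_0I)\ge\lambda+c_0$. Using $s(L)+c_0=r(L+c_0I)$, we get $s(L)\ge\lambda$, and therefore $\lambda_2\le s(L)$.

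Suppose instead $\phi\gg0$ and $L\phi\le\lambda\phi$. Any $\psi\in X$ satisfies $-C\phi\le\psi\le C\phi$ for some $C>0$, because $\phi\gg0$. Positivity then gives
\[
-C(\lambda+c_0)^k\phi\le(L+c_0I)^k\psi\le C(\lambda+c_0)^k\phi ,
\]
so $r(L+c_0I)\le\lambda+c_0$. Hence $s(L)\le\lambda_1$.

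\textbf{Upper bound $\lambda_1\le s(L)$.} Here no eigenvalue is needed. For $\lambda>s(L)$, let $e=(1,\dots,1)^T$ and set
\[
\phi=(\lambda I-L)^{-1}e=\sum_{k\ge0}\frac{(L+c_0I)^ke}{(\lambda+c_0)^{k+1}} .
\]
The Neumann series converges because $r(L+c_0I)<\lambda+c_0$. Every term is nonnegative, so $\phi\ge e/(\lambda+c_0)\gg0$. Moreover $\lambda\phi-L\phi=e\ge0$, so $L\phi\le\lambda\phi$. Letting $\lambda\downarrow s(L)$ gives $\lambda_1\le s(L)$.

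\textbf{Lower bound $\lambda_2\ge s(L)$: the main obstacle.} Let $s_0:=\max_{\overline\Omega}s(A(x))$. Since $\sigma_e(L)\subset\sigma(L)$, we have $s(L)\ge s_0$. If $s(L)>s_0$, Theorem \ref{th2.1} supplies a strongly positive eigenfunction, and we are done.

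In the remaining case $s(L)=s_0$ we would perturb $A$ downward. Fix $\varepsilon>0$ and define
\[
A_\varepsilon(x)=A(x)-\bigl(s(A(x))-\min\{s(A(x)),s_0-\varepsilon\}\bigr)I .
\]
This matrix is continuous and cooperative, satisfies $A_\varepsilon\le A$, and has $s(A_\varepsilon(x))=\min\{s(A(x)),s_0-\varepsilon\}$. Thus $\max_{\overline\Omega}s(A_\varepsilon(x))=s_0-\varepsilon$, and this maximum is attained on the nonempty open set $\{x:s(A(x))>s_0-\varepsilon\}$. (That set meets $\Omega$, since $\Omega$ is dense in $\overline\Omega$.)

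Let $L_\varepsilon$ denote $L$ with $A$ replaced by $A_\varepsilon$. The off-diagonal structure is unchanged and only a multiple of the identity is subtracted, so $L_\varepsilon+c_0'I$ is still eventually essentially strongly positive for suitable $c_0'$. Part (ii) of the preceding proposition gives $s(L_\varepsilon)>s_0-\varepsilon$. Theorem \ref{th2.1} then provides $\psi\gg0$ with $L_\varepsilon\psi=s(L_\varepsilon)\psi$. Consequently
\[
L\psi\ge L_\varepsilon\psi\ge(s_0-\varepsilon)\psi ,
\]
so $\lambda_2\ge s_0-\varepsilon$. Letting $\varepsilon\to0$ gives $\lambda_2\ge s_0=s(L)$.

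The delicate points are verifying that eventual essential strong positivity survives the perturbation, and that the open-set hypothesis of the proposition holds. Both look routine but need checking.
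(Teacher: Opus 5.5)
Your proof is correct. The paper gives no argument of its own for this lemma: it quotes \cite[Theorem 2.6]{lin2026}. By the paper's own introduction, that reference treats constant diagonal $D$ with weakly irreducible $A$, and relies on strong positivity of the principal eigenfunction together with the generalized-principal-eigenvalue machinery of \cite{MR3583503,MR4601060,MR5005572}.

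Your flattening $A_\varepsilon$ produces a principal eigenvalue via part (ii) of the proposition preceding Lemma~\ref{le2.2}, and then a strongly positive eigenfunction via Theorem~\ref{th2.1}. That is the same mechanism written out explicitly, so the substance matches. What your version adds is that it is self-contained and covers the lemma in the generality actually stated here: general nonnegative $D(x)$, with only eventual essential strong positivity assumed.

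It also shows exactly where that hypothesis enters. The chain $\lambda_2\le s(L)\le\lambda_1\le s(L)$ uses only positivity of $L+c_0I$, through Gelfand's formula and the Neumann-series vector $(\lambda I-L)^{-1}e\gg0$. Hence the inf--sup formulas hold for every $L$ of this form, with or without (\textbf{LT}). Eventual essential strong positivity is needed only for $\lambda_2\ge s(L)$, which is why the block decomposition of Theorem~\ref{th3.1} is genuinely needed only on the sup--inf side.

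The two points you flag do hold. The first, however, needs a different justification than ``only a multiple of the identity is subtracted'': $L_\varepsilon=L-g(x)I$ with $g=\max\{0,s(A(\cdot))-s_0+\varepsilon\}$ is multiplication by a nonconstant function.

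\begin{itemize}
\item Take $c_0$ with $L+c_0I$ positive and eventually essentially strongly positive, and set $c_0'=c_0+\max g$. Then $L_\varepsilon+c_0'I=(L+c_0I)+(\max g-g)I$, so $(L_\varepsilon+c_0'I)^l\phi\ge(L+c_0I)^l\phi$ for $\phi\ge0$.
\item Positivity of $L+c_0I$ forces $A(x)+c_0I\ge0$. Together with $d_{ii}>0$ and $J_i(x,x)>0$, this gives $(L+c_0I)\phi\ge D\mathcal{J}\phi>0$ for every $\phi>0$.
\item So the alternative $(L+c_0I)^l\phi=0$ never occurs. Hence some $(L+c_0I)^l\phi\gg0$, and therefore $(L_\varepsilon+c_0'I)^l\phi\gg0$.
\end{itemize}

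For the second point, take $\Omega_0=\{x\in\Omega: s(A(x))>s_0-\varepsilon\}$. It is open by continuity of $x\mapsto s(A(x))$ and nonempty because $\Omega$ is dense in $\overline\Omega$. On it, $s(A_\varepsilon)\equiv s_0-\varepsilon=\max_{\overline\Omega}s(A_\varepsilon)$. Part (ii) carries no irreducibility hypothesis, so it applies directly to $L_\varepsilon$.
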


\section{Variational characterizations of the spectrum bound}

In this section, we focus on establishing  a variational characterization of the spectral bound for the case where $L+cI$ is not eventually essentially strongly positive for any $c\ge 0$.

We first prove a property of $L$ that plays an important role in the characterization of the spectral bound.

\begin{proposition}\label{pr3.1}
Let $c_{0}$ be a constant for which $L+c_{0}I$ is positive. Then there exists a  permutation matrix $P$ such that 
{\small	\[
	PD(x)P^{T}=\begin{pmatrix} D_{11}(x) & 0 & \cdots & 0 \\ D_{21}(x) & D_{22}(x) & \cdots & 0 \\ \vdots & \vdots & \ddots & \vdots \\ D_{\tilde{n}1}(x) & D_{\tilde{n}2}(x) & \cdots & D_{\tilde{n}\tilde{n}}(x) \end{pmatrix}, \quad  PA(x)P^{T}=\begin{pmatrix} A_{11}(x) & 0 & \cdots & 0 \\ A_{21}(x) & A_{22}(x) & \cdots & 0 \\ \vdots & \vdots & \ddots & \vdots \\ A_{\tilde{n}1}(x) & A_{\tilde{n}2}(x) & \cdots & A_{\tilde{n}\tilde{n}}(x) \end{pmatrix},
	\]}
	and $L_{k} + c_{0} I$ is an eventually essentially strongly positive operator on $X_{k}$ for all $1 \le k \le \tilde n$, where
	\[
	\left[ L_{k}\psi\right](x) = D_{kk}(x)\left[ \mathcal{J}_{k}\psi\right] (x) + A_{kk}(x)\psi(x), \quad \psi \in X_{k}.
	\]
	Here $X = X_1 \times \cdots \times X_{\tilde n}$ with $X_k = C(\overline\Omega,\mathbb{R}^{m_k})$, where $m_k$ is the size of the $k$-th diagonal block, and $\mathcal{J}\phi = (\mathcal{J}_1\phi_1,\dots,\mathcal{J}_{\tilde n}\phi_{\tilde n})^T$ for $\phi = (\phi_1,\dots,\phi_{\tilde n})\in X$.
\end{proposition}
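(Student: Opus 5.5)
We use the Frobenius normal form of a nonnegative matrix pattern. Define a directed graph $G$ on the index set $\mathcal{S}$ with an edge $j\to i$ ($i\neq j$) whenever $d_{ij}\not\equiv 0$ or $a_{ij}\not\equiv 0$ on $\overline\Omega$. This is exactly the incidence pattern of the combined matrix $D+A$ off the diagonal. The relation on $\mathcal{S}$ given by ``$i$ and $j$ are mutually reachable in $G$'' is an equivalence relation. Its classes $C_1,\dots,C_{\tilde n}$ are the strongly connected components.

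The condensation of $G$ is acyclic, so it admits a topological ordering. We choose the labelling so that every edge between distinct classes goes from $C_l$ to $C_k$ only when $l<k$. Let $P$ be the permutation matrix that lists the indices of $C_1$ first, then those of $C_2$, and so on. In the permuted coordinates, $d_{ij}\equiv a_{ij}\equiv 0$ whenever $i\in C_k$, $j\in C_l$ and $l>k$, since otherwise there would be an edge $C_l\to C_k$. This is precisely the simultaneous block lower triangular form of $PD(x)P^T$ and $PA(x)P^T$. Set $m_k=|C_k|$.

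The permutation also acts on $\mathcal{J}$. The operator $P\mathcal{J}P^T$ is again diagonal, with the kernels $J_i$ permuted accordingly. This yields the decomposition $\mathcal{J}\phi=(\mathcal{J}_1\phi_1,\dots,\mathcal{J}_{\tilde n}\phi_{\tilde n})$ on $X=X_1\times\cdots\times X_{\tilde n}$, after identifying $L$ with $PLP^T$ (conjugation by $P$ preserves $X^+$ and its interior).

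Next we show that each $L_k+c_0I$ is eventually essentially strongly positive on $X_k$. By construction, $C_k$ is strongly connected within the restricted graph. Hence for every splitting of $C_k$ into nonempty sets $\mathcal{I},\mathcal{J}'$ there exist $i\in\mathcal{I}$ and $j\in\mathcal{J}'$ with $d_{ij}\not\equiv0$ or $a_{ij}\not\equiv0$. In other words, $D_{kk}+A_{kk}$ is weakly irreducible, which by the remark after the first Proposition of Section 2 means $D_{kk}$ or $A_{kk}$ is weakly irreducible.

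Two points must be checked before applying that Proposition to $L_k$ in dimension $m_k$:
\begin{itemize}
\item $L_k$ has the required structure. Its diagonal entries of $D_{kk}$ are positive, its $A_{kk}$ is cooperative, and its kernels inherit the hypotheses on the $J_i$.
\item The constant can be taken to be the given $c_0$. Positivity of $L+c_0I$ forces $a_{ii}+c_0\ge 0$ pointwise. Indeed, testing positivity on $\phi=e_i\varphi$ with $\varphi$ a nonnegative bump concentrated near a point gives this after letting the support shrink, because the integral term vanishes in the limit. Then $L_k+c_0I$ is positive, and the argument of \cite[Proposition 2.1]{MR4628895} applies with this $c_0$.
\end{itemize}

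The singleton case $m_k=1$ needs separate attention, since weak irreducibility is vacuous there. For $L_k\psi=d_{ii}\mathcal{J}_i\psi+(a_{ii}+c_0)\psi$, we use $d_{ii}>0$ and $J_i(x,x)>0$. Iterating the integral operator spreads positivity over the connected set $\overline\Omega$, so for any $\psi>0$ a power of $L_k+c_0I$ applied to $\psi$ is strongly positive.

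The main obstacle is this last step: justifying eventual essential strong positivity with the prescribed $c_0$ rather than some larger constant. The graph-theoretic decomposition itself is routine. I would handle the obstacle by redoing the spreading argument explicitly. Positivity of the supports is propagated along paths in $C_k$ using the terms $d_{ij}\mathcal{J}_j$ (which spread supports in space via $J_j(x,x)>0$) and the terms $a_{ij}$ (which transfer positivity between components at points where $a_{ij}>0$). Each term is nonnegative after the shift by $c_0$, so no larger constant is needed.
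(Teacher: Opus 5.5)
Your proof is correct in substance, but it takes a different route from the paper's. The paper argues by induction on $n$. If $L+c_0I$ is not eventually essentially strongly positive, a witness $\phi$ has an iterate whose components are either everywhere positive (indices in $\mathcal I$) or identically zero (indices in $\mathcal J$). From this the paper reads off $d_{ji}\equiv a_{ji}\equiv 0$ for $j\in\mathcal J$, $i\in\mathcal I$, and applies the induction hypothesis to the two diagonal blocks. Eventual essential strong positivity of the final blocks then comes for free, because the recursion only stops at such blocks.

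You instead build the Frobenius normal form of the combined off-diagonal pattern of $D+A$ directly, and then prove positivity block by block. Your version identifies the blocks intrinsically: they are the strongly connected components, determined by the zero patterns of $D$ and $A$ alone and independent of $c_0$. Your key observation is that under $L+c_0I$ the set where a component is positive never shrinks, since $d_{ii}(x)\int_\Omega J_i(x,y)u_i(y)\,\mathrm{d}y$ keeps and enlarges it while all other terms are nonnegative once $a_{ii}+c_0\ge 0$. This is exactly what the paper's splitting step tacitly needs. There, $l$ must be chosen after the positive/zero pattern of the iterates has stabilized, because the displayed identity is written for $v=(L+c_0I)^{l-1}\phi$, whose $\mathcal I$-components need not be positive. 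The paper's route is shorter and avoids graph theory.

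One link in your chain has to go. The remark after the first Proposition of Section 2 holds only in one direction: if $D$ or $A$ is weakly irreducible then so is $D+A$, but not conversely. For $n=2$, take $D=\begin{pmatrix}1&0\\1&1\end{pmatrix}$ and $A=\begin{pmatrix}0&1\\0&0\end{pmatrix}$. Then $D+A$ is irreducible, whereas neither $D$ (as $d_{12}\equiv0$) nor $A$ (as $a_{21}\equiv0$) is weakly irreducible. Such a pair forms a single strongly connected component in your decomposition, and that Proposition, as stated, does not apply to it.

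What rescues the proof is your final explicit spreading argument. Once component $j$ is everywhere positive, any $i$ with $d_{ij}\not\equiv0$ or $a_{ij}\not\equiv0$ becomes positive on a nonempty open set, and after finitely many more steps on all of $\overline\Omega$; by monotonicity nothing already gained is lost. Following paths in $C_k$, which may alternate between $d$-edges and $a$-edges, gives $(L_k+c_0I)^l\psi\in\mathrm{Int}(X_k^+)$ for large $l$ whenever $\psi>0$.

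Make that argument the proof and drop the appeal to the remark. It also covers the singleton case and uses only the given $c_0$. In effect, it proves the Section 2 Proposition under the weaker, correct hypothesis that $D+A$ is weakly irreducible.
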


\begin{proof}

If  $L+c_{0} I$ is an eventually essentially strongly positive operator, the conclusion follows trivially by taking $P=I$. Otherwise, assume that $L+c_{0} I$ is not eventually essentially strongly positive and prove the result by mathematical induction.

Since $J_i(x,x)>0$, it is easy to see that for any $\phi \in X^+$ there exists $l:=l(\phi)$ such that $u:=L^l\phi$ satisfies either $u_i(x)>0$ for all $x\in\overline\Omega$ or $u_i\equiv 0$. Consequently, $L$ is eventually essentially strongly positive when $n=1$, so the conclusion holds for $n=1$.

Now assume that the conclusion holds for all $n\le r$. We  prove it for $n=r+1$. Since $L+c_{0} I$ is not eventually essentially strongly positive, there exist $\phi\in X^+$ and $l=l(\phi)$ such that $u:=\left( L+c_{0} I\right) ^l\phi$ satisfies $u_i(x)>0$ for some $i\in\mathcal{S}$ and $u_j\equiv0$ for some $j\in\mathcal{S}$. Then $\mathcal{S}$ can be split into two disjoint nonempty sets
\[
\mathcal{I}:=\{i\in\mathcal{S}: u_i(x)>0,\ \forall x\in\overline\Omega\},\qquad 
\mathcal{J}:=\{j\in\mathcal{S}: u_j(x)=0,\ \forall x\in\overline\Omega\}.
\]
Without loss of generality, after a suitable permutation $P_1$, we may assume that $\mathcal{J}=\{1,\dots,s\}$ and $\mathcal{I}=\{s+1,\dots,n\}$ with $0<s<n$.

Set $v:=\left( L+c_{0}\right)^{l-1} \phi$. For each $j\in\mathcal{J}$, we have
\[
0=\sum_{i\in\mathcal{I}} d_{ji}(x)\int_\Omega J_i(x,y)v_i(y)\,dy+\sum_{i\in\mathcal{I}} a_{ji}(x)v_i(x).
\]
It follows that $d_{ji}(x)\equiv0$ and $a_{ji}(x)\equiv0$ for all $i\in\mathcal{I}$ and $j\in\mathcal{J}$. Hence
\[
P_1 D(x) P_1^T=\begin{pmatrix} D_{11}(x) & 0 \\ D_{21}(x) & D_{22}(x) \end{pmatrix},\qquad 
P_1 A(x) P_1^T=\begin{pmatrix} A_{11}(x) & 0 \\ A_{21}(x) & A_{22}(x) \end{pmatrix},
\]
where the blocks correspond to the splitting $\mathcal{J}\cup\mathcal{I}$. Since $s\le r$ and $ n-s \le r$, by the induction hypothesis, the result follows for $n=r+1$.	
\end{proof}

From Proposition \ref{pr3.1}, there exists a permutation matrix such that 
$D$ and  $A$ become lower triangular block matrices. Consequently, without loss of generality, we make the following assumption:
\begin{itemize}
	\item [(\textbf{LT})] $D$ and $A$ possess  the lower triangular block form described in Proposition \ref{pr3.1}.
\end{itemize}

\begin{proposition}\label{pr3.2}
	Assume that {\rm(\textbf{LT})} holds. Then
	$s(L)= \max\limits_{1\le k\le \tilde{n}} s(L_{k})$.
	
\end{proposition}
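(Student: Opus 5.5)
Under (\textbf{LT}) the operator $L$ is block lower triangular on $X=X_1\times\cdots\times X_{\tilde n}$:
\[
[L\phi]_k = L_k\phi_k + \sum_{j<k}\bigl(D_{kj}\mathcal{J}_j\phi_j + A_{kj}\phi_j\bigr),\qquad 1\le k\le\tilde n .
\]
The plan is to compare resolvents of $L$ with those of the diagonal blocks $L_k$. We work with the shifted operators $L+c_0I$ and $L_k+c_0I$, which are positive, so that spectral bounds coincide with spectral radii: $s(L)+c_0=r(L+c_0I)$ and $s(L_k)+c_0=r(L_k+c_0I)$. It therefore suffices to show $r(L+c_0I)=\max_k r(L_k+c_0I)$, or equivalently $\sigma(L)\cap\{\operatorname{Re}\lambda>\mu\}=\emptyset$ exactly when the same holds for every $L_k$. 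The cleanest route is to prove
\[
\rho(L)\supseteq\bigcap_k\rho(L_k)
\qquad\text{and}\qquad
\sigma(L)\supseteq\sigma(L_k)\ \text{for the relevant real points.}
\]

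\emph{Step 1 (upper bound).} Let $\lambda>\max_k s(L_k)$ be real. Solve $(\lambda-L)\phi=f$ by forward substitution:
\[
\phi_1=(\lambda-L_1)^{-1}f_1,\qquad
\phi_k=(\lambda-L_k)^{-1}\Bigl(f_k+\sum_{j<k}(D_{kj}\mathcal{J}_j+A_{kj})\phi_j\Bigr).
\]
This gives a bounded inverse, so $\lambda\in\rho(L)$. The same holds for all complex $\lambda$ with $\operatorname{Re}\lambda>\max_k s(L_k)$, since each such $\lambda$ lies in $\rho(L_k)$. Hence $s(L)\le\max_k s(L_k)$.

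\emph{Step 2 (lower bound).} Fix $k$; we show $s(L_k)\le s(L)$. Take real $\lambda>s(L)$. For $\lambda$ large, $(\lambda+c_0-(L+c_0I))^{-1}=\sum_m(L+c_0I)^m/(\lambda+c_0)^{m+1}$ is a positive operator. By the standard fact for positive operators on a normal cone, this resolvent is positive for every real $\lambda>s(L)$. Because $L$ is block lower triangular, the inverse is also block lower triangular, and its $(k,k)$ diagonal block, obtained from the block-triangular inversion formula, is exactly $(\lambda-L_k)^{-1}$. So $\lambda\in\rho(L_k)$ for all real $\lambda>s(L)$.

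To avoid the resolvent-positivity argument, one can instead argue by contradiction. Suppose $s(L_k)>s(L)$. The Krein--Rutman/positivity theory gives that $s(L_k)\in\sigma(L_k)$, since $L_k+c_0I$ is positive and $r(L_k+c_0I)\in\sigma(L_k+c_0I)$. The block-triangular identity
\[
\lambda-L_k = \text{(the $(k,k)$ entry of the block structure of } \lambda-L\text{)},
\]
together with invertibility of $\lambda-L$ and of the blocks $\lambda-L_j$, $j<k$, then forces invertibility of $\lambda-L_k$. This contradicts $s(L_k)\in\sigma(L_k)$.

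The main obstacle is this lower bound. For a triangular operator, invertibility of the whole operator does not in general imply invertibility of the diagonal blocks in infinite dimensions; one only gets left or right invertibility of the corner blocks. I will handle it by induction on $\tilde n$ using a $2\times2$ splitting, with first block $L_1$ and second block the remaining lower-triangular operator $\widetilde L$. If $\lambda$ is real with $\lambda>\max(s(L),\,s(\widetilde L))$ and $\lambda\in\rho(L)$, then $\lambda-L_1$ is injective and has dense range, because $(\lambda-L)$ restricted to vectors $(\phi_1,0)$ and projected to the first component gives $\lambda-L_1$. Positivity then resolves the remaining issue. As $\lambda\downarrow s(L_1)$ from above, $\|(\lambda-L_1)^{-1}\|\to\infty$, since $s(L_1)\in\sigma(L_1)$ and the resolvent norm blows up at the spectrum. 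Yet the $(1,1)$ block of $(\lambda-L)^{-1}$ equals $(\lambda-L_1)^{-1}$ and stays bounded on $(s(L),\infty)$ near $s(L_1)$ if $s(L_1)>s(L)$. This contradiction gives $s(L_1)\le s(L)$. The analogous argument applies to the last block $\widetilde L$ via the $(2,2)$ entry, and induction finishes the proof.
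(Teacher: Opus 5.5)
Your Step 1 is correct. Forward substitution puts every $\lambda$ with $\operatorname{Re}\lambda>\max_k s(L_k)$ in $\rho(L)$. This is the triangular inclusion $\sigma(L)\subset\bigcup_k\sigma(L_k)$ that the paper cites for $s(L)\le\max_k s(L_k)$.

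The gap is in Step 2: none of your three versions actually proves $s(L_k)\le s(L)$.

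\begin{itemize}
\item In the first version, the sentence ``because $L$ is block lower triangular, the inverse is also block lower triangular'' is exactly what has to be proved. The block inversion formula presupposes invertible diagonal blocks, and, as you note yourself, the claim fails for general operator matrices. You also wrote the Neumann series only ``for $\lambda$ large'' and then switched to positivity of the resolvent, which says nothing about triangularity.
\item In the second version, invertibility of $\lambda-L$ and of $\lambda-L_j$, $j<k$, gives only surjectivity of $\lambda-L_k$; injectivity would need the blocks $j>k$. Moreover, at $\lambda=s(L_k)$ you do not know that $\lambda-L_j$ is invertible for $j<k$.
\item In the induction you finally commit to, the blow-up argument for $L_1$ needs $\lambda-\widetilde L$ invertible on a right neighbourhood of $s(L_1)$. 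So it tacitly assumes $s(\widetilde L)\le s(L_1)$. The ``analogous'' argument for $\widetilde L$ needs $\lambda-L_1$ invertible near $s(\widetilde L)$, i.e.\ $s(L_1)\le s(\widetilde L)$. Hence ``this contradiction gives $s(L_1)\le s(L)$'' is unjustified when $s(\widetilde L)>s(L_1)$. The induction hypothesis only identifies $s(\widetilde L)$ with $\max_{k\ge 2}s(L_k)$, so it does not supply the missing comparison with $s(L)$.
\end{itemize}

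The repair is short. Apply the blow-up argument only to a block $L_{k_0}$ with $s(L_{k_0})=\mu:=\max_k s(L_k)$.
\begin{itemize}
\item For real $\lambda>\mu$ all diagonal blocks are invertible, so $(\lambda-L)^{-1}$ is block lower triangular with $(k_0,k_0)$ block $(\lambda-L_{k_0})^{-1}$. Hence $\|(\lambda-L_{k_0})^{-1}\|\le C\|(\lambda-L)^{-1}\|$.
\item If $\mu>s(L)$, the right-hand side stays bounded as $\lambda\downarrow\mu$. The left-hand side is at least $(\lambda-\mu)^{-1}$, because $\mu\in\sigma(L_{k_0})$.
\end{itemize}
This is a contradiction, and no induction is needed.

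Your first version can also be saved. The Neumann series converges for every real $\lambda>s(L)$, since $\lambda+c_0>s(L)+c_0=r(L+c_0I)$, and each of its terms is block lower triangular. Most directly, the $(k,k)$ block of $(L+c_0I)^m$ is $(L_k+c_0I)^m$. So $\|(L_k+c_0I)^m\|\le C\|(L+c_0I)^m\|$, and Gelfand's formula gives $s(L_k)+c_0\le r(L_k+c_0I)\le r(L+c_0I)=s(L)+c_0$.

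The paper argues in this spirit but by order comparison rather than compression. Since the off-diagonal blocks of $D$ and $A$ are nonnegative, $L\phi\ge\operatorname{diag}\{L_1,\dots,L_{\tilde n}\}\phi$ for $\phi\in X^+$. Iterating the shifted operators gives $\|(L+c_0I)^m\|\ge\|(\operatorname{diag}\{L_1,\dots,L_{\tilde n}\}+c_0I)^m\|$. Gelfand's formula together with $r(T)\in\sigma(T)$ for positive $T$ on the Banach lattice $X$ then gives the lower bound.

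The compression argument does not use the sign of the off-diagonal blocks. The paper's argument uses that sign but avoids any discussion of resolvents of operator matrices.
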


\begin{proof}
Denote 
\[
\tilde{L}:=\text{diag}\left\lbrace L_{1}, \dots, L_{\tilde n}\right\rbrace. 
\]
It is easy to see that $\sigma(\tilde{L})= \cup_{k =1}^{ \tilde n}\sigma(L_{\tilde k})$. Notice that 
$
L\phi\ge \tilde{L}\phi 
$
for any $\phi\in X^{+}$. By  the Gelfand's formula (see, e.g., \cite[Theorem
VI.6]{MR751959}), we obtain that $r(L+c_{0}I)\ge r(\tilde L+c_{0}I)$,  where  $c_{0}$ is chosen such that   $ \tilde L+c_{0}I$ is positive. 
Recall that the spectral radius of a positive operator belongs to the spectrum if the order Banach space is a Banach lattice (see, e.g., \cite[p.276]{MR1921782}). Moreover, one can verify that $(X,X^{+})$ is a  Banach lattice.  Then 
$s(L) +c_{0}= r(L+c_{0}I)\ge r(\tilde{L}+c_{0}I) = s(\tilde L)+c_{0}$.

On the other hand, according to \cite[Corollary 4]{MR1618686}, we have $\sigma(L)\subset \sigma(\tilde{L})$, which implies  $s( L) \le s(\tilde L)  $. Therefore, $s(L)= s(\tilde L)=\max\limits_{1\le k\le \tilde{n}} s(L_{k})$.
Then we have $s(L)=\max\limits_{1\le k\le \tilde{n}} s(L_{k})$.
\end{proof}

By the same argument, we obtain the following results.

\begin{corollary}
	Assume that {\rm(\textbf{LT})} holds. Then
	$\max\limits_{x\in \overline{\Omega}}s(A(x))= \max\limits_{1\le k\le \tilde{n}} \max\limits_{x\in \overline{\Omega}}s(A_{kk}(x))$.
\end{corollary}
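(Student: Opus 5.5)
The plan is to treat this as the matrix analogue of Proposition \ref{pr3.2} and argue directly at the level of matrices for each fixed $x\in\overline{\Omega}$. Under (\textbf{LT}), $A(x)$ is block lower triangular with diagonal blocks $A_{kk}(x)$. The eigenvalues of a block lower triangular matrix are exactly the union of the eigenvalues of its diagonal blocks. This follows from the factorization
\[
\det(\lambda I - A(x)) = \prod_{k=1}^{\tilde n} \det(\lambda I - A_{kk}(x)).
\]
Hence $\sigma(A(x)) = \bigcup_{k=1}^{\tilde n}\sigma(A_{kk}(x))$ for every $x$, and therefore
\[
s(A(x)) = \max_{1\le k\le \tilde n} s(A_{kk}(x)), \qquad x\in\overline{\Omega}.
\]

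Taking the maximum over $x\in\overline{\Omega}$ and interchanging the two finite and compact maxima gives
\[
\max_{x\in\overline\Omega} s(A(x)) = \max_{x\in\overline\Omega}\max_{k} s(A_{kk}(x)) = \max_{k}\max_{x\in\overline\Omega} s(A_{kk}(x)).
\]
The interchange is trivial because both sides equal the supremum of $s(A_{kk}(x))$ over the set of pairs $(k,x)$.

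For the maxima over $x$ to be attained, I will check that $x\mapsto s(A_{kk}(x))$ is continuous. This holds because $A_{kk}(x)$ is a cooperative matrix with continuous entries, and for a cooperative matrix $s(A_{kk}(x))$ is a real eigenvalue, namely $r(A_{kk}(x)+c_0I)-c_0$. Eigenvalues depend continuously on the matrix entries, and so does the spectral radius. Alternatively, the maxima could be replaced by suprema throughout, and the identity would still hold without any continuity argument.

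If one wishes to mimic the proof of Proposition \ref{pr3.2} literally instead, the route is as follows. Compare $A(x)+c_0I \ge \operatorname{diag}\{A_{kk}(x)+c_0I\}$ entrywise and use monotonicity of the Perron root to obtain ``$\ge$''. Then use the triangular structure, so that the spectrum is contained in the union of the block spectra, to obtain ``$\le$''. This route is not needed, since the determinant factorization gives both inequalities at once.

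There is no real obstacle here. The only point requiring care is the continuity, or attainment, of the maxima, and this is handled as above.
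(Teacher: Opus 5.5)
Your proof is correct, and your main argument is more elementary than the paper's. The paper proves the corollary ``by the same argument'' as Proposition~\ref{pr3.2}. That argument is the alternative you sketch at the end, applied either to the matrices or to the multiplication operator $\phi\mapsto A(\cdot)\phi$ (that is, $L$ with $D\equiv 0$).

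In that argument, ``$\ge$'' comes from the comparison $A(x)+c_0I\ge \operatorname{diag}\{A_{11}(x),\dots,A_{\tilde n\tilde n}(x)\}+c_0I\ge 0$. This is combined with monotonicity of the spectral radius of positive operators via Gelfand's formula, and with the identity $r(\cdot+c_0I)=s(\cdot)+c_0$. The reverse inequality ``$\le$'' comes from the inclusion of the spectrum of a triangular operator matrix in the union of the spectra of its diagonal blocks.

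Your factorization $\det(\lambda I-A(x))=\prod_k\det(\lambda I-A_{kk}(x))$ instead gives the equality $\sigma(A(x))=\bigcup_k\sigma(A_{kk}(x))$ directly. Both inequalities then follow at once, pointwise in $x$, without using cooperativity.

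The paper's route buys uniformity with the infinite-dimensional setting of Proposition~\ref{pr3.2}. There, triangularity only gives $\sigma(L)\subset\sigma(\tilde L)$, and this inclusion can be strict, so positivity is genuinely needed for the reverse inequality. In the finite-dimensional setting of this corollary, that machinery is unnecessary.

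You also address attainment of the maxima over $\overline{\Omega}$, which the paper leaves implicit. Continuity of $x\mapsto s(A_{kk}(x))$ holds for any continuous matrix family, since eigenvalues depend continuously on the entries. So your appeal to cooperativity and the Perron root at that step is not needed either.
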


Set $m_{0} = 0$. Denote $\mathcal{S}_{k}:=\left\lbrace m_{k-1}+1, \dots, m_{k} \right\rbrace $ and $X_{k}^{+}= C(\overline\Omega,\mathbb{R}_{+}^{m_k})$ for  $1\le k \le \tilde n$,  with $m_{k}$ and $\tilde n $  as given in Proposition \ref{pr3.1}.  According to Lemma \ref{le2.2} and Proposition \ref{pr3.2}, we can derive the following conclusion.

\begin{theorem}\label{th3.1}
	Assume that {\rm(\textbf{LT})} holds. Then
	\[ \begin{aligned}
	s(L)=& \max\limits_{1\le k\le \tilde{n}} \inf \left\lbrace \lambda\in \mathbb{R}: \exists \phi \in \mathrm{Int}(X_{k}^{+}) \ s.t. \ \left[ L_{k}\phi\right](x)\le \lambda \phi(x), \forall x\in \overline{\Omega}  \right\rbrace\\[0.5em]
	=& \max\limits_{1\le k\le \tilde{n}}\inf_{\phi \in \mathrm{Int}(X_{k}^{+})} \sup_{x \in \Omega, i \in \mathcal{S}_{k}}  \frac{\sum_{j\in \mathcal{S}_{k} } d_{ij}(x) \int_{\Omega} J_j(x,y) \phi_j(y) \,\mathrm{d}y + \sum_{j\in \mathcal{S}_{k}} a_{ij}(x)\phi_{j}(x) }{\phi_i(x)}\\[0.5em]
	=&\max\limits_{1\le k\le \tilde{n}} \sup \left\lbrace \lambda\in \mathbb{R}: \exists \phi \in \mathrm{Int}(X_{k}^{+}) \ s.t. \ \left[ L_{k}\phi\right](x)\ge \lambda \phi(x), \forall x\in \overline{\Omega}   \right\rbrace\\[0.5em]
	=& \max\limits_{1\le k\le \tilde{n}} \sup_{\phi \in  \mathrm{Int}(X_{k}^{+})} \inf_{x \in \Omega, i \in \mathcal{S}_{k}}  \frac{\sum_{j\in \mathcal{S}_{k}} d_{ij}(x) \int_{\Omega} J_j(x, y) \phi_j(y) \,\mathrm{d}y + \sum_{j\in \mathcal{S}_{k}} a_{ij}(x)\phi_{j}(x) }{\phi_i(x)}.
\end{aligned}
	\]
\end{theorem}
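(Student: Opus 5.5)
The plan is to combine Proposition \ref{pr3.2} with Lemma \ref{le2.2}, applied separately to each diagonal block. By Proposition \ref{pr3.2}, under (\textbf{LT}) we have $s(L)=\max_{1\le k\le \tilde n} s(L_k)$. So it suffices to show that, for each fixed $k$, $s(L_k)$ equals each of the four expressions inside the outer maxima. Taking the maximum over $k$ then gives the four equalities of the theorem.

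Fix $k\in\{1,\dots,\tilde n\}$ and regard $L_k$ as an operator on $X_k=C(\overline\Omega,\mathbb{R}^{m_k})$ with cone $X_k^+$. We must check that $L_k$ is of the same type as $L$, so that Lemma \ref{le2.2} applies verbatim with $n$ replaced by $m_k$ and $\mathcal{S}$ replaced by $\mathcal{S}_k$:
\begin{itemize}
\item $D_{kk}(x)$ is a nonnegative matrix whose diagonal entries are among the $d_{ii}$, so they are positive.
\item $A_{kk}(x)$ is cooperative, since off-diagonal entries of a principal submatrix of a cooperative matrix are nonnegative.
\item The kernels $J_j$, $j\in\mathcal{S}_k$, keep their properties.
\item The hypothesis of Lemma \ref{le2.2} holds: by Proposition \ref{pr3.1}, $L_k+c_0I$ is eventually essentially strongly positive on $X_k$.
\end{itemize}
Hence Lemma \ref{le2.2} gives
\[
s(L_k)=\inf\{\lambda:\exists\phi\in\mathrm{Int}(X_k^+),\ L_k\phi\le\lambda\phi\}=\sup\{\lambda:\exists\phi\in\mathrm{Int}(X_k^+),\ L_k\phi\ge\lambda\phi\},
\]
together with the corresponding inf-sup and sup-inf quotient formulas.

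Next we write the $i$-th component of $L_k\phi$ for $\phi\in X_k$ and $i\in\mathcal{S}_k$. By the definitions of $D_{kk}$, $A_{kk}$ and $\mathcal{J}_k$ (the blocks indexed by $\mathcal{S}_k$), it is
\[
\sum_{j\in\mathcal{S}_k}d_{ij}(x)\int_\Omega J_j(x,y)\phi_j(y)\,\mathrm{d}y+\sum_{j\in\mathcal{S}_k}a_{ij}(x)\phi_j(x).
\]
These are exactly the quotients' numerators in the statement. Note that Lemma \ref{le2.2} writes $J_j(x-y)$, whereas here the kernel is $J_j(x,y)$; this is only notation, and the underlying result holds for general kernels.

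The only point needing care is bookkeeping: making sure that the block structure of Proposition \ref{pr3.1} matches the index sets $\mathcal{S}_k$ after the permutation, and that the $c_0$ there works uniformly for all $k$. The latter is immediate, since Lemma \ref{le2.2} only needs some constant for each $k$. With these checks in place, taking $\max_k$ of each of the four characterizations and invoking Proposition \ref{pr3.2} completes the argument.
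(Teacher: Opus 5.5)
Your proposal is correct and follows the paper's own route: Lemma \ref{le2.2} is applied to each diagonal block $L_k$ on $X_k$, which is justified because Proposition \ref{pr3.1} makes $L_k+c_0I$ eventually essentially strongly positive. The four block characterizations are then combined with $s(L)=\max_{1\le k\le\tilde n}s(L_k)$ from Proposition \ref{pr3.2}, exactly as in the paper.
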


\section{Variational characterizations of the basic reproduction ratio}

In this section, we will apply the variational characterization of the spectral bound established in Section 3 to obtain a variational characterization of the basic reproduction ratio. To illustrate this, we first introduce the following multi‑strain SIS epidemic model: 
\begin{equation}\label{eq4.1}
	\begin{cases} 
	\displaystyle	\dfrac{\partial u }{\partial t} = d_{u} \int_{\Omega}K(x-y) \left(  u(y)-u(x) \right) \, \mathrm{d}y -\dfrac{u \sum_{i=1}^k p_i(x) v_i}{u + \sum_{j=1}^k v_j} + \sum_{i=1}^k q_i(x) v_i, & (t,x) \in \mathbb{R}^+ \times \Omega, \\[1em]
		\displaystyle	\dfrac{\partial v_i}{\partial t}  = d_{v_{i}} \int_{\Omega}K(x-y) \left(  v_{i}(y)-v_{i}(x) \right) \, \mathrm{d}y + p_i(x) \dfrac{u v_i}{u + \sum_{j=1}^k v_j} - q_i(x) v_i, & (t,x) \in \mathbb{R}^+ \times \Omega, \\[1em]
		\displaystyle\int_{\Omega} \bigl( u(x,0) + \sum_{i=1}^k v_i(0,x) \bigr)\, \mathrm{d}x =N> 0,
	\end{cases}
\end{equation}
where $k\ge 1$, $K(x)$ is a nonnegative continuous dispersal kernel function satisfying $K(0)>0$ and $K(-x)=K(x)$. Here, $v_i(x,t)$ denotes the density of individuals infected with the $i$-th strain at time $t$ and location $x\in\Omega$ ($i=1,\ldots,k$), where $d_{v_i}>0$ are the diffusion coefficients of the infected subpopulations, and $p_i(x)$ and $q_i(x)$ are the infection and recovery rates for the $i$-th strain, respectively. System \eqref{eq4.1} extends the single‑strain SIS model studied in \cite{MR4620151,MR3945624}. When the diffusion process is modeled using the Laplace operator, system \eqref{eq4.1} has been studied in \cite{MR2897882,MR4344573,MR4609520,MR4725138,MR2002704} and references therein. Moreover, when the diffusion process incorporates L\'evy flights (i.e., fractional diffusion), system \eqref{eq4.1} was studied by Shi et al. \cite{MR5013092}.

Clearly, the disease-free equilibrium of the system \eqref{eq4.1} is $(\frac{N}{\left| \Omega\right| }, 0, \dots, 0)^{T}$. Linearizing the system at the disease-free equilibrium, we obtain:
\begin{equation}\label{eq4.2}
	\frac{\partial v_i}{\partial t} = d_{v_i} \int_{\Omega} K(x-y) \bigl( v_i(y,t) - v_i(x,t) \bigr) \, \mathrm{d}y + \bigl( p_i(x) - q_i(x) \bigr) v_i(x,t), \qquad i=1,\dots,k.
\end{equation}
Applying the theory of the basic reproduction ratio (see \cite{MR3032845,MR3612966,MR3992071,MR2505085}), we can define the basic reproduction ratio for system \eqref{eq4.1} based on the linearized system \eqref{eq4.2}.

In order to make the characterization of the basic reproduction ratio more general, we rewrite system \eqref{eq4.2} in a more general form: 
\begin{equation}\label{eq5.1}
	\begin{cases}
		\dfrac{	\partial u}{\partial t}= D(x)[\mathcal{J}u(t,\cdot)](x) + V(x)u(t,x) +  F(x)u(t,x), & t>0,  x \in \Omega,\\
		u(0,x)=u_{0}(x), & x\in \Omega,
	\end{cases}
\end{equation}
and consider the following nonlocal dispersal systems:
\begin{equation}\label{eq5.2}
	\begin{cases}
		\dfrac{	\partial u}{\partial t}= D(x)[\mathcal{J}u(t,\cdot)](x) + V(x)u(t,x), & t>0,  x \in \Omega,\\
		u(0,x)=u_{0}(x), & x\in \Omega,
	\end{cases}
\end{equation}
where $ V(x) = \bigl(v_{ij}(x)\bigr)_{m \times m} $ is a cooperative matrix and $ F(x) = \bigl(f_{ij}(x)\bigr)_{m \times m} $ is a  nonnegative matrix, with $v_{ij}, f_{ij}\in C(\overline\Omega, \mathbb{R})$. Note that $s(F(x))\ge 0$ for all  $x\in\overline\Omega$. If $\max\limits_{x\in\overline\Omega}s(F(x))=0 $,  we may replace $F$, $V$ by $\delta I + F $,  $V-\delta I$ for any $\delta>0$, respectively.  Hence, without loss of generality, we assume throughout this section that $\max\limits_{x\in\overline\Omega}s(F(x))>0$ holds.
By the same argument as in Proposition \ref{pr3.1}, after a suitable permutation if necessary, the matrices $D$, $V$ and $F$ admit a lower triangular block form. Without loss of generality, we assume
\[
D(x)=\begin{pmatrix} D_{11}(x) & 0 & \cdots & 0 \\ D_{21}(x) & D_{22}(x) & \cdots & 0 \\ \vdots & \vdots & \ddots & \vdots \\ D_{\tilde{n}1}(x) & D_{\tilde{n}2}(x) & \cdots & D_{\tilde{n}\tilde{n}}(x) \end{pmatrix},
\]	
and 
\[
V(x)=\begin{pmatrix} V_{11}(x) & 0 & \cdots & 0 \\ V_{21}(x) & V_{22}(x) & \cdots & 0 \\ \vdots & \vdots & \ddots & \vdots \\ V_{\tilde{n}1}(x) & V_{\tilde{n}2}(x) & \cdots & V_{\tilde{n}\tilde{n}}(x) \end{pmatrix}, \quad  F(x)=\begin{pmatrix} F_{11}(x) & 0 & \cdots & 0 \\ F_{21}(x) & F_{22}(x) & \cdots & 0 \\ \vdots & \vdots & \ddots & \vdots \\ F_{\tilde{n}1}(x) & F_{\tilde{n}2}(x) & \cdots & F_{\tilde{n}\tilde{n}}(x) \end{pmatrix},
\]
where  the size of the $k$-th diagonal block is still denoted by $m_{k}$. 
For $\mu>0$, define the operator $ \mathcal{L}_{\mu} : X \to X$ by 
\[
[\mathcal{L}_{\mu} \phi](x) := D(x)[\mathcal{J}\phi](x) + V(x)\phi(x) + \frac{1}{\mu} F(x)\phi(x), \quad  \phi \in X, x\in \overline{\Omega},
\]
Similarly, we define $\mathcal{L}_{\mu,k}$ analogously to $L_{k}$ in Proposition \ref{pr3.1}.
Define the operators $\mathcal{L}_{\infty} ,\mathcal{F} : X\to X$ by
\[
\left[ \mathcal{L}_{\infty} \phi\right] (x) := D(x) \left[ \mathcal{J}u\right](x) + V(x)\phi(x), \quad \left[ \mathcal{F}\phi\right] (x):= F(x)\phi(x), \quad \phi\in X.
\]
By the general theory of semigroups, the system \eqref{eq5.2} generates  a $C_0$-semigroup $\Phi(t)$ on $X$ such that $\left[ \Phi(t)u_{0}\right](x)= u(t,x)$, where $u(t,x)$ denotes the solution to system \eqref{eq5.2} with the initial condition $u(0, x)=u_0(x)$. Define the operators $Q: X\to X$ and $\hat Q: X \to X $ by
\[
\left[ Qu\right] (x):= \int_{0}^{+\infty}F(x)\left[ \Phi(t)u\right](x) \, \mathrm{d}t, \quad u \in X,
\] 
and 
\[
\left[ \hat Qu\right] (x):= \int_{0}^{+\infty}\left[ \Phi(t)\mathcal{F}u\right](x) \, \mathrm{d}t, \quad u \in X.
\]
By the Gelfand's formula (see, e.g., \cite[Theorem
VI.6]{MR751959}), it is easy to verify that $r(Q)=r(\hat Q)$.
Clearly, $Q= -\mathcal{F}\mathcal{L}_{\infty}^{-1}$; see \cite[Theorem 3.12]{MR2505085}.
The spectral radius of $Q$ or $\hat Q$ is then defined as the basic reproduction ratio for system \eqref{eq4.1}, i.e., 
\[
\mathcal{R}_{0}= r(Q) \text{ or } \mathcal{R}_{0} =r (\hat Q).
\]

To apply the theory of the basic reproduction ratio, as developed in \cite{MR3032845,MR3612966,MR3992071,MR2505085}, we impose the following assumption:
\begin{itemize}
	\item [(\textbf{L})] $s(\mathcal{L}_\infty)<0$.
\end{itemize}

According to  \cite[Theorem 3.5]{MR2505085} and \cite[Theorem 3.4]{lin2026}, we have the following conclusions.	

\begin{proposition}\label{pr4.1}
	The following statements are valid:
	\begin{itemize}
		\item [\rm(i)] 	$s(\mathcal{L}_{\mu})$ is continuous with respect to $\mu>0$;
		
		\item [\rm(ii)] If $s( \mathcal{L}_{\infty}) <0$, then $\mathcal{R}_{0}-1 $ has the same sing as $s(\mathcal{L}_{1})$; moreover, $\frac{1}{\mu}r(-\mathcal{F} \mathcal{L}_{\infty}^{-1}) -1$ has the same sing as  $s(\mathcal{L}_{\mu})$;
		
		\item[\rm(iii)] There exists a unique $\mu_{0}>0$ such that $s(\mathcal{L}_{\mu_{0}})=0 $ if and only if    $s(\mathcal{L}_{\infty})<0$; moreover, $\mu_{0}=\mathcal{R}_{0}$.
	\end{itemize}
\end{proposition}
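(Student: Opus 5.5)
Proposition \ref{pr4.1} is essentially the translation of \cite[Theorem 3.5]{MR2505085} and \cite[Theorem 3.4]{lin2026} to the present setting, so I will verify their hypotheses and prove the three items in the order (i), (iii), (ii).

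For (i) I will use that $\mathcal{L}_\mu=\mathcal{L}_\infty+\mu^{-1}\mathcal{F}$ depends on $\mu$ through a bounded multiplication operator, continuously in operator norm. The spectral bound is not continuous in general, but here it equals the spectral radius of a positive operator up to a shift. The plan is:
\begin{itemize}
\item[\rm(a)] By Proposition \ref{pr3.2}, $s(\mathcal{L}_\mu)=\max_k s(\mathcal{L}_{\mu,k})$, so it suffices to treat each block.
\item[\rm(b)] Each block has a min-max characterization from Theorem \ref{th3.1} (equivalently Lemma \ref{le2.2}).
\item[\rm(c)] If $\mu_1<\mu_2$, then $\mathcal{L}_{\mu_2,k}\phi\le \mathcal{L}_{\mu_1,k}\phi$ for $\phi\ge0$, so $s(\mathcal{L}_{\mu,k})$ is nonincreasing in $\mu$.
\item[\rm(d)] For $|\mu^{-1}-\nu^{-1}|\le\varepsilon$ we have $\mathcal{L}_{\nu,k}\phi\le \mathcal{L}_{\mu,k}\phi+\varepsilon\|F\|_\infty C\phi$ with $C$ independent of $\phi$. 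The sup-inf/inf-sup formulas then give $|s(\mathcal{L}_{\mu,k})-s(\mathcal{L}_{\nu,k})|\le \varepsilon \max_x\|F(x)\|$, a Lipschitz estimate in $1/\mu$.
\item[\rm(e)] Taking the maximum over finitely many continuous functions gives continuity of $s(\mathcal{L}_\mu)$.
\end{itemize}

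For (ii) I will invoke \cite[Theorem 3.5]{MR2505085}. Its hypotheses are that $\mathcal{L}_\infty$ generates a positive (resolvent-positive) semigroup with $s(\mathcal{L}_\infty)<0$, and that $\mathcal{F}$ is positive, which holds since $F\ge0$. The conclusion is that $s(\mathcal{L}_\infty+\mathcal{F})$ and $r(-\mathcal{F}\mathcal{L}_\infty^{-1})-1$ have the same sign. Since $Q=-\mathcal{F}\mathcal{L}_\infty^{-1}$, this is the first claim. Replacing $\mathcal{F}$ by $\mu^{-1}\mathcal{F}$ gives the second, because $r(-\mu^{-1}\mathcal{F}\mathcal{L}_\infty^{-1})=\mu^{-1}\mathcal{R}_0$. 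Resolvent positivity follows because $\mathcal{L}_\infty+cI$ is positive for large $c$; no irreducibility is needed here.

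For (iii), assume (\textbf{L}). Then (ii) says $\operatorname{sign} s(\mathcal{L}_\mu)=\operatorname{sign}(\mathcal{R}_0/\mu-1)$, so $s(\mathcal{L}_\mu)=0$ exactly when $\mu=\mathcal{R}_0$, provided $\mathcal{R}_0>0$. The main obstacle is positivity of $\mathcal{R}_0$. To get it, I pick a point where $s(F(x))>0$ and compare $Q$ with a positive multiplication-type operator. Alternatively, I use (i) and monotonicity: $s(\mathcal{L}_\mu)\ge \max_x s(\mu^{-1}F(x)+V(x))\to+\infty$ as $\mu\to0$, while $s(\mathcal{L}_\mu)\to s(\mathcal{L}_\infty)<0$ as $\mu\to\infty$ by (d). The intermediate value theorem gives a zero, and then (ii) forces $\mu_0=\mathcal{R}_0>0$ and uniqueness. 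The lower bound $s(\mathcal{L}_\mu)\ge\max_x s(A_\mu(x))$ comes from the inclusion of the essential spectrum in the spectrum, together with $s(\mu^{-1}F+V)\ge \mu^{-1}s(F)-\max|V|$ for cooperative matrices.

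Conversely, suppose some $\mu_0>0$ has $s(\mathcal{L}_{\mu_0})=0$. Then $s(\mathcal{L}_\infty)\le s(\mathcal{L}_{\mu})$ for every $\mu$. I need strict inequality $s(\mathcal{L}_\infty)<0$. I would argue blockwise: on the block $k$ attaining the maximum, either $F_{kk}\not\equiv0$, and the min-max characterization together with strong positivity of the principal eigenfunction gives strict decrease in $\mu$, or the block is independent of $\mu$. This step, handling blocks whose $F_{kk}$ vanishes, is where I expect to rely on \cite[Theorem 3.4]{lin2026} blockwise and where care is needed.
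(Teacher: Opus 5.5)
Your skeleton matches the paper's: (i) goes through Proposition \ref{pr3.2} plus continuity of each block bound (the paper cites \cite[Proposition 3.1]{lin2026}, you use the min-max formulas), and (ii) goes through \cite[Theorem 3.5]{MR2505085}. For (iii) the paper only cites \cite[Theorem 3.4]{lin2026}.

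Your forward direction of (iii) is fine, and (ii) alone suffices for it. If $\mathcal{R}_0=0$, then $s(\mathcal{L}_\mu)<0$ for all $\mu$, contradicting $s(\mathcal{L}_\mu)\to+\infty$ as $\mu\to0^+$. So you never need the limit $\mu\to\infty$. Step (d) does not justify that limit anyway: the blocks are built from $D+V+F$, so $\mathcal{L}_{\infty,k}$ need not satisfy the hypothesis of Lemma \ref{le2.2}.

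Step (d) is also wrong as stated. Since $F_{kk}$ has off-diagonal entries, $\sup_{x,i}\sum_j f_{ij}(x)\phi_j(x)/\phi_i(x)$ is unbounded over $\mathrm{Int}(X_k^+)$, so $C$ must depend on $\phi$. The claimed Lipschitz bound in $\mu^{-1}$ is in fact false. Take $D=dI$, a common kernel, and constant $V=\begin{pmatrix}-1&1\\0&-1\end{pmatrix}$, $F=\begin{pmatrix}0&0\\1&0\end{pmatrix}$; then $s(\mathcal{L}_\mu)=\mathrm{const}+\mu^{-1/2}$. Continuity on $(0,\infty)$ still holds if you fix the near-optimal test functions first and then take $|\mu^{-1}-\nu^{-1}|$ small depending on them, as in the proof of Lemma \ref{le6.1}(i).

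The genuine gap is the converse half of (iii), which you leave open, and the mechanism you propose fails. Strict decrease of $s(\mathcal{L}_{\mu,k})$ whenever $F_{kk}\not\equiv0$ rests on a principal eigenfunction that need not exist, and the claim is false in general. Take $n=1$, let $x_0$ be the unique maximizer of $v$, assume $1/(v(x_0)-v)\in L^1(\Omega)$ and $d$ small. Then $d\mathcal{J}+v$ has no principal eigenvalue, and this persists under small perturbations of $v$ away from $x_0$. If $0\le f\not\equiv0$ vanishes near $x_0$, then $s(\mathcal{L}_\mu)=v(x_0)$ for all large $\mu$.

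The missing idea is to use the uniqueness hypothesis together with the non-strict monotonicity you already have. For $c$ large and $\mu\ge\mu_0$, the operators $\mathcal{L}_\infty+c$, $\mathcal{L}_\mu+c$, $\mathcal{L}_{\mu_0}+c$ are positive and ordered. The Gelfand-formula argument of Proposition \ref{pr3.2} then gives $s(\mathcal{L}_\infty)\le s(\mathcal{L}_\mu)\le s(\mathcal{L}_{\mu_0})=0$. If $s(\mathcal{L}_\infty)=0$, then $s(\mathcal{L}_\mu)=0$ for every $\mu\ge\mu_0$, contradicting uniqueness. Hence $s(\mathcal{L}_\infty)<0$, with no blockwise case analysis needed.
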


\begin{proof}
	
	By Proposition \ref{pr3.2}, we have $s(\mathcal{L}_{\mu}) = \max\limits_{1\le k\le \tilde{n}}s(\mathcal{L}_{\mu,k})$. Recall that, by \cite[Proposition 3.1]{lin2026}, $s(\mathcal{L}_{\mu,k})$  is continuous with respect to $\mu>0$. Therefore, $s(\mathcal{L}_{\mu})$ is continuous with respect to $\mu>0$.
	
	Moreover, statements (ii) and (iii) follow directly from \cite[Theorem 3.5]{MR2505085} and \cite[Theorem 3.4]{lin2026}, respectively.
\end{proof}

Note that, in contrast to \cite[Proposition 3.1]{lin2026}, we obtain Proposition \ref{pr4.1}(i) without the weakly irreducible condition on $V+F$.

For each $1\le k\le \tilde{n}$, set 
\[
\mathcal{X}_{k}:= \left\lbrace  \phi \in \mathrm{Int}(X_{k}^{+}) : \sum_{j\in \mathcal{S}_{k}}-d_{ij}(x) \int_{\Omega} J_{j}(x, y) \phi_j(y) \,\mathrm{d}y - \sum_{j\in \mathcal{S}_{k}}  v_{ij}(x)\phi_{j}(x) >0, \forall x\in \overline\Omega, i\in \mathcal{S}_{k}\right\rbrace. 
\]
For each $1\le k\le \tilde{n}$, $\phi \in \mathrm{Int}(X_{k}^{+})$ and $i\in \mathcal{S}_{k}$, set 
\[
\Omega_{\phi,i}^{k}:= \left\lbrace x\in \overline{\Omega}: \sum_{j\in \mathcal{S}_{k}}-d_{ij}(x) \int_{\Omega} J_{j}(x, y) \phi_j(y) \,\mathrm{d}y - \sum_{j\in \mathcal{S}_{k}}  v_{ij}(x)\phi_{j}(x) >0 \right\rbrace .
\]

\begin{lemma}\label{le4.2}
  Assume that there exists a unique $\mu_{0}>0$ such that $s(\mathcal{L}_{\mu_{0}})=0 $. Then 
  \[
  \begin{aligned}
  	\mu_{0}= & \max\limits_{1\le k\le \tilde{n}}\inf_{\phi \in \mathcal{X}_{k} } \sup_{x \in \Omega, i \in \mathcal{S}_{k}} \dfrac{\sum_{j\in \mathcal{S}_{k}}f_{ij}(x)\phi_{j}(x)}{\sum_{j\in \mathcal{S}_{k}}-d_{ij}(x) \int_{\Omega} J_{j}(x, y) \phi_j(y) \,\mathrm{d}y - \sum_{j\in \mathcal{S}_{k}}  v_{ij}(x)\phi_{j}(x)}
  	\\[0.4em]
  	=& \max\limits_{1\le k\le \tilde{n}} \sup_{\phi \in \mathrm{Int}(X_{k}^{+}) } \inf_{(x, i)\in (\Omega_{\phi,i}^{k}, \mathcal{S}_{k} )}  \dfrac{\sum_{j\in \mathcal{S}_{k}}f_{ij}(x)\phi_{j}(x)}{\sum_{j\in \mathcal{S}_{k}}-d_{ij}(x) \int_{\Omega} J_{j}(x,y) \phi_j(y) \,\mathrm{d}y - \sum_{j\in \mathcal{S}_{k}}  v_{ij}(x)\phi_{j}(x)}.
  \end{aligned}
  \]
\end{lemma}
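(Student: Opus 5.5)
Under (\textbf{LT}), Proposition \ref{pr3.2} gives $s(\mathcal{L}_{\mu})=\max_{1\le k\le\tilde n}s(\mathcal{L}_{\mu,k})$ for every $\mu>0$, and also $s(\mathcal{L}_\infty)=\max_k s(\mathcal{L}_{\infty,k})$. The plan is to reduce the lemma to a blockwise statement. For each block $k$, I would show that the two inf--sup and sup--inf expressions both equal a number $\mu_{0,k}\in[0,\infty)$. This number is defined as the unique root of $s(\mathcal{L}_{\mu,k})=0$ when one exists, and set to $0$ when $s(\mathcal{L}_{\mu,k})<0$ for all $\mu>0$ (for instance when $F_{kk}\equiv0$). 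I would then show $\mu_0=\max_k\mu_{0,k}$.

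\textbf{Monotonicity of the block bounds.} Since $F_{kk}\ge0$, the map $\mu\mapsto s(\mathcal{L}_{\mu,k})$ is nonincreasing. This follows from the Collatz--Wielandt formulas of Lemma \ref{le2.2} applied to $\mathcal{L}_{\mu,k}$, which is legitimate because $\mathcal{L}_{\mu,k}+cI$ is eventually essentially strongly positive by Proposition \ref{pr3.1}. As $\mu\to\infty$, $s(\mathcal{L}_{\mu,k})\to s(\mathcal{L}_{\infty,k})\le s(\mathcal{L}_\infty)$, and $s(\mathcal{L}_\infty)<0$ by Proposition \ref{pr4.1}(iii), since $\mu_0$ exists. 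Applying Proposition \ref{pr4.1}(ii),(iii) to the single block gives $s(\mathcal{L}_{\mu,k})$ the sign of $\mu_{0,k}/\mu-1$. Consequently $s(\mathcal{L}_{\mu_0})=0$ forces $\max_k s(\mathcal{L}_{\mu_0,k})=0$, and hence $\mu_0=\max_k\mu_{0,k}$.

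\textbf{The blockwise identity.} For each $k$, the identity $\mu_{0,k}=\inf\sup=\sup\inf$ is exactly the characterization of \cite{lin2026} (their analogue of this lemma for the eventually essentially strongly positive case). I would either cite it or reprove it as follows.

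\emph{Upper bound on the inf--sup.} If $\phi\in\mathcal{X}_k$ and the sup equals $\lambda>0$, then $F_{kk}\phi\le\lambda(-D_{kk}\mathcal{J}_k\phi-V_{kk}\phi)$. This says $\mathcal{L}_{\lambda,k}\phi\le0$, so Lemma \ref{le2.2} gives $s(\mathcal{L}_{\lambda,k})\le0$ and therefore $\mu_{0,k}\le\lambda$.

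\emph{Sup--inf direction.} If $\phi\gg0$ and the inf over $\Omega^k_{\phi,i}$ equals $\lambda$, then $\mathcal{L}_{\lambda,k}\phi\ge0$ holds on each $\Omega^k_{\phi,i}$. Off $\Omega^k_{\phi,i}$ the denominator is $\le0$ and $F\ge0$, so the inequality holds there too. Lemma \ref{le2.2} then gives $s(\mathcal{L}_{\lambda,k})\ge0$, i.e.\ $\lambda\le\mu_{0,k}$.

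\emph{Attainment.} Equality is approached by perturbation. Take $\mu$ slightly larger (resp.\ smaller) than $\mu_{0,k}$, so that $s(\mathcal{L}_{\mu,k})<0$ (resp.\ $>0$). Lemma \ref{le2.2} then yields $\phi\gg0$ with $\mathcal{L}_{\mu,k}\phi\le-\epsilon\phi$ (resp.\ $\ge\epsilon\phi$). In the first case $\phi\in\mathcal{X}_k$ automatically, and the sup is at most $\mu$. In the second case the inf over $\Omega^k_{\phi,i}$ is at least $\mu$.

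\textbf{Main obstacle.} The delicate step is the degenerate blocks, where $\mu_{0,k}=0$ or $F_{kk}$ vanishes on parts of $\overline\Omega$. There one must check that the expressions still give $0$ and not something undefined: the sup over an empty $\Omega^k_{\phi,i}$ should be read as $0$, and $\mathcal{X}_k$ must be nonempty. Nonemptiness of $\mathcal{X}_k$ follows from $s(\mathcal{L}_{\infty,k})<0$ together with Lemma \ref{le2.2}. I would handle these cases by treating $F_{kk}\equiv0$ separately and using the perturbation $F_{kk}+\delta I$, $V_{kk}-\delta I$ as in the paper's normalization, then letting $\delta\to0$.
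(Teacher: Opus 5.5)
Your proposal is correct and follows the paper's route. You reduce to the diagonal blocks via Proposition \ref{pr3.2}, invoke (or reprove through Lemma \ref{le2.2}) the blockwise characterization of \cite{lin2026}, and conclude $\mu_0=\max_k\mu_{0,k}$ from the per-block sign relation. Your treatment of degenerate blocks is an improvement: the paper asserts that a root $\mu_{0,k}$ exists for every $k$, which fails when $F_{kk}\equiv0$, whereas your convention $\mu_{0,k}=0$ together with your direct inf--sup and sup--inf arguments already covers this case. Two small simplifications: the $\delta$-perturbation is unnecessary, and the set of pairs $(x,i)$ in the sup--inf is never empty (otherwise $\mathcal{L}_{\mu,k}\phi\ge0$ for all $\mu>0$), so no convention for an empty infimum is needed.
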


\begin{proof}
By Proposition \ref{pr4.1}, we know that $s( \mathcal{L}_{\infty})<0$, and hence  $s( \mathcal{L}_{\infty,k})<0$ where $\mathcal{L}_{\infty,k}$  is defined analogously to $\mathcal{L}_{\infty}$. According to \cite[Theorem 3.4]{lin2026}, for each $k$, there exists a unique  $\mu_{0,k}$ such that $s(\mathcal{L}_{\mu_{0,k},k})=0$, and 
\[
\begin{aligned}
	\mu_{0,k}= &\inf_{\phi \in \mathcal{X}_{k} }  \sup_{x \in \Omega, i \in \mathcal{S}_{k}} \dfrac{\sum_{j\in \mathcal{S}_{k}}f_{ij}(x)\phi_{j}(x)}{\sum_{j\in \mathcal{S}_{k}}-d_{ij}(x) \int_{\Omega} J_{j}(x- y) \phi_j(y) \,\mathrm{d}y - \sum_{j\in \mathcal{S}_{k}}  v_{ij}(x)\phi_{j}(x)}
	\\[0.4em]
	=& \sup_{\phi \in \mathrm{Int}(X_{k}^{+}) } \inf_{(x, i)\in (\Omega_{\phi,i}^{k}, \mathcal{S}_{k} )} \dfrac{\sum_{j\in \mathcal{S}_{k}}f_{ij}(x)\phi_{j}(x)}{\sum_{j\in \mathcal{S}_{k}}-d_{ij}(x) \int_{\Omega} J_{j}(x- y) \phi_j(y) \,\mathrm{d}y - \sum_{j\in \mathcal{S}_{k}}  v_{ij}(x)\phi_{j}(x)}.
\end{aligned}
\]
Since $s(\mathcal{L}_{\mu})= \max\limits_{1\le k\le \tilde{n}} s(\mathcal{L}_{\mu,k})$ and each $s(\mathcal{L}_{\mu,k})$ is nonincreasing in $\mu$, we obtain $\mu_{0}= \max\limits_{1\le k\le \tilde{n}} \mu_{0,k}$, which yields the desired conclusion.
\end{proof}

From Proposition \ref{pr4.1} and Lemma \ref{le4.2}, we  derive the main results in this section as follows:

\begin{theorem}
	Assume that {\rm(\textbf{L})} holds. Then $\mathcal{R}_{0}-1$ has the same sing as $s(\mathcal{L}_{1})$, and $ \mu=\mathcal{R}_{0}$ is the unique positive solution of $s(\mathcal{L}_{\mu})=0$. Moreover, 
	\[
	\begin{aligned}
		\mathcal{R}_{0}= & \max\limits_{1\le k\le \tilde{n}}\inf_{\phi \in \mathcal{X}_{k} }  \sup_{x \in \Omega, i \in \mathcal{S}_{k}} \dfrac{\sum_{j\in \mathcal{S}_{k}}f_{ij}(x)\phi_{j}(x)}{\sum_{j\in \mathcal{S}_{k}}-d_{ij}(x) \int_{\Omega} J_{j}(x, y) \phi_j(y) \,\mathrm{d}y - \sum_{j\in \mathcal{S}_{k}}  v_{ij}(x)\phi_{j}(x)}
		\\[0.4em]
		=& \max\limits_{1\le k\le \tilde{n}} \sup_{\phi \in \mathrm{Int}(X_{k}^{+}) } \inf_{(x, i)\in (\Omega_{\phi,i}^{k}, \mathcal{S}_{k} )} \dfrac{\sum_{j\in \mathcal{S}_{k}}f_{ij}(x)\phi_{j}(x)}{\sum_{j\in \mathcal{S}_{k}}-d_{ij}(x) \int_{\Omega} J_{j}(x,y) \phi_j(y) \,\mathrm{d}y - \sum_{j\in \mathcal{S}_{k}}  v_{ij}(x)\phi_{j}(x)}.
	\end{aligned}
	\]
\end{theorem}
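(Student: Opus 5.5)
The plan is to assemble the theorem directly from Proposition \ref{pr4.1} and Lemma \ref{le4.2}, checking only that the hypotheses line up.

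\emph{Sign statement.} Under (\textbf{L}), i.e. $s(\mathcal{L}_\infty)<0$, Proposition \ref{pr4.1}(ii) gives at once that $\mathcal{R}_0-1$ has the same sign as $s(\mathcal{L}_1)$.

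\emph{Characterization of $\mathcal{R}_0$ as a root.} Proposition \ref{pr4.1}(iii), in its ``if'' direction, yields a unique $\mu_0>0$ with $s(\mathcal{L}_{\mu_0})=0$, and moreover $\mu_0=\mathcal{R}_0$. I would also give a short independent check from (ii) and (i) that does not lean on the cited reference. The map $\mu\mapsto s(\mathcal{L}_\mu)$ is continuous by (i). It is also nonincreasing, because each block $s(\mathcal{L}_{\mu,k})$ is nonincreasing and Proposition \ref{pr3.2} gives $s(\mathcal{L}_\mu)=\max_k s(\mathcal{L}_{\mu,k})$. Now apply (ii): $\frac1\mu r(-\mathcal{F}\mathcal{L}_\infty^{-1})-1$ has the same sign as $s(\mathcal{L}_\mu)$. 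So $s(\mathcal{L}_\mu)>0$ for $\mu<\mathcal{R}_0$, $s(\mathcal{L}_\mu)=0$ at $\mu=\mathcal{R}_0$, and $s(\mathcal{L}_\mu)<0$ for $\mu>\mathcal{R}_0$. Uniqueness of the zero follows, and it requires $\mathcal{R}_0>0$. This is guaranteed by the standing reduction $\max_x s(F(x))>0$, which makes $-\mathcal{F}\mathcal{L}_\infty^{-1}$ a nonzero positive operator with positive spectral radius. The point I would check most carefully is exactly this positivity of $\mathcal{R}_0$ together with the strict sign change. Without it, $s(\mathcal{L}_\mu)$ might vanish on an interval, or the root might not exist.

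\emph{Variational formulas.} Once a unique root $\mu_0=\mathcal{R}_0$ of $s(\mathcal{L}_\mu)=0$ exists, the hypothesis of Lemma \ref{le4.2} is satisfied. That lemma expresses $\mu_0$ as
\[
\max_{1\le k\le\tilde n}\mu_{0,k},
\]
with both the inf-sup and the sup-inf formulas. Substituting $\mu_0=\mathcal{R}_0$ gives the displayed identities.

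The only subtlety in this step is in Lemma \ref{le4.2} itself. For a block $k$ where $F_{kk}\equiv0$, the value $\mu_{0,k}$ may fail to exist. In that case that block should be read as contributing $0$ (equivalently $s(\mathcal{L}_{\mu,k})=s(\mathcal{L}_{\infty,k})<0$ for all $\mu$), and it does not affect the maximum. I would remark on this when invoking the lemma.
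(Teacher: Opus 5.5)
Your proposal is correct and follows the paper's route: the theorem is read off from Proposition~\ref{pr4.1}(ii),(iii), which gives the sign statement and $\mu_0=\mathcal{R}_0$, and then Lemma~\ref{le4.2}, which gives the maximum over blocks of the inf--sup and sup--inf formulas. Your side remarks go beyond the paper and are sound. A block with $F_{kk}\equiv 0$ has $s(\mathcal{L}_{\mu,k})=s(\mathcal{L}_{\infty,k})<0$ for all $\mu$, so no $\mu_{0,k}$ exists; the paper's proof of Lemma~\ref{le4.2} silently assumes every block has one, and counting such a block as $0$ agrees with the value of its formulas. The one loose spot is $\mathcal{R}_0>0$: a nonzero positive operator can have zero spectral radius, so argue instead that $-\mathcal{L}_\infty^{-1}\ge c_0^{-1}I$ on $X^{+}$ whenever $\mathcal{L}_\infty+c_0I$ is positive, which yields $\mathcal{R}_0\ge c_0^{-1}\max_{x\in\overline\Omega}s(F(x))>0$.
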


\section{Dynamics of stem cell regeneration models}

In this section, we apply the established spectral theory to study the dynamical behavior of a class of multiple‑genotype stem cell regeneration models with epigenetic transitions, with or without gene mutations.

This problem has been previously investigated by Su et al.  \cite{MR4612706} under the assumption that an associated operator possesses a principal eigenvalue.
Here, we remove this assumption. The absence of a principal eigenvalue introduces a fundamental difficulty in analyzing nonlocal dispersal dynamics.  
By applying the more general theory  of principal eigenvalues developed in Section 3, we overcome this difficulty and derive the same spectral conclusions. Consequently, the dynamical behavior of the system can be established by following the arguments in \cite{MR4612706}.

Furthermore, we extend the analysis to the reducible case, which was not considered in \cite{MR4612706}, and study its dynamical behavior.
Meanwhile, we will employ the characterization of the spectral bound established in Section 3 to provide a quantitative description of the key parameters in the model, and perform numerical simulations accordingly.

\subsection{The irreducible case}

In this subsection, we extend the results of \cite{MR4612706} to the case where the principal eigenvalue does not exist, with the system irreducible.

The system is given by
\begin{equation}\label{eq4.3}
	\begin{cases}
		\begin{aligned}
			\frac{\partial u_i(x,t)}{\partial t} &= 2(1+\epsilon m_{ii})\beta(s(t))\int_{\Omega} p_i(x,y)e^{-\mu_i(y)}u_i(y,t)\, \mathrm{d}y \\
			&\quad + 2\epsilon\sum_{j\neq i} m_{ij}\beta(s(t))\int_{\Omega} p_j(x,y)e^{-\mu_j(y)}u_j(y,t)\, \mathrm{d}y \\
			&\quad - \beta(s(t))u_i(x,t) - \kappa(x)u_i(x,t), \quad (x,t)\in\overline{\Omega}\times(0,\infty),\ 1\leq i\leq n,
		\end{aligned}\\[3em]
	\displaystyle	s(t) = \sum\limits_{i=1}^n \int_{\Omega} \zeta(x) u_i(x,t) \, \mathrm{d}x, \hspace{6.6cm} t\in[0,\infty),\\[1em]
		u_i(x,0) = u_{i,0}(x), \hspace{7.3cm} x\in\overline{\Omega},\ 1\leq i\leq n.
	\end{cases}
\end{equation}
Here  $\Omega$ is a bounded domain in $\mathbb{R}^{m}$ with smooth  boundary. The function  $u_{i}(x,t)$ denote the density of stem cells of genetic type $i$ in the resting phase, with epigenetic state $x $ and time $t$.  Cells leave the resting phase at differentiation rate $\kappa(x)$ and  enter proliferation at  proliferation rate $\beta(s(t))$. During division, a cell of type $j$ and state $y$ survives apoptosis with probability $e^{-\mu_j(y)}$ and produces two daughter cells. The daughter inherits the same genetic type $i=j$ with probability $1+\epsilon m_{ii}$, or mutates from $j$ to $i\neq j$ with probability $\epsilon m_{ij}$, and $m_{ii}=-\sum\limits_{j\neq i}m_{ji}$. The epigenetic state may change from $y$ to $x$ according to the kernel $p_i(x,y)$. The function $s(t)$ is the total cytokine concentration, with secretion rate $\zeta(x)$.

For more details, including the model derivation, we refer to \cite{MR4070771,MR4067996,MR4278592,MR4612706}. 
To analyze dynamics of system  \eqref{eq4.3}, we make the following assumption:
\begin{itemize}

	\item [(\textbf{A1})]  $M=(m_{ij})_{n \times n}$ is  a  cooperation matrix, $ 0 < \epsilon \leq \epsilon_0 = \min_{1 \leq i \leq n} \{-\frac{1}{m_{ii}}\}; \, p_i \in C(\overline{\Omega} \times \overline{\Omega}) $  is nonnegative with $ p_i(x, x) > 0 $ for all $ x \in \overline{\Omega}$ and $\int_{\Omega}p_{i}(x,y)\, \mathrm{d}x=1$ for all $ y \in \overline{\Omega}$;  $ \mu_i, \kappa, \kappa_{i} \in C(\overline{\Omega}) $ with $ \mu_i \geq 0 $ and $ \kappa, \kappa_{i} > 0$;
	
	\item [(\textbf{A2})]   $\beta, \beta_{i} \in C^1([0, \infty))$ are bounded and strictly decreasing with a positive lower bound, and $\beta'$ and $\beta_{i}$ are also bounded on $[0, \infty)$;

	\item [(\textbf{A3})] $ M := (m_{ij})_{n \times n}$  is irreducible.

\end{itemize}

We first study a more general eigenvalue problem associated with the system, and based on this, analyze the dynamical behavior of the system. To this end, we define the  operator $\mathcal{P} : X \to X  $ by
\[
\mathcal{P}\varphi := 
\left( 	\mathcal{P}_1 \varphi_1, 
	\mathcal{P}_2 \varphi_2, 
  \dots,
	\mathcal{P}_n \varphi_n\right) ^{T}, \quad \varphi \in X,
\]
with 
\[
\left[ \mathcal{P}_i\varphi_i\right] (x) := 2 \int_{\Omega} p_i(x, y)e^{-\mu_i(y)}\varphi_i(y) \, \mathrm{d}y, \quad  1\leq i \leq n,
\]
and $ \mathcal{H} : X \to X $ by
\[
\left[ \mathcal{H}\varphi\right] (x) := (h_1(x)\varphi_1(x), h_2(x)\varphi_2(x), \dots, h_n(x)\varphi_n(x))^T, \quad \varphi \in X,
\]
where $ h_i(x) := -1 - \alpha_i(\lambda)\kappa_i(x) $  with  $\alpha_{i}(\lambda):= \frac{1}{\beta_{i}(\lambda)} $  for all $ 1 \leq i \leq n $. 
Denote 
\[
\mathcal{T}_{\epsilon,\lambda}:= \left( I+\epsilon M\right) \mathcal{P} + \mathcal{H}.
\]

\begin{lemma}\label{le6.1}
	 Assume that {\rm(\textbf{A1})}--{\rm(\textbf{A3})} hold. The following conclusions hold:
	 
	 \begin{itemize}
	 	\item [\rm(i)] For each fixed $  0 < \epsilon \leq \epsilon_0 $, $s(\mathcal{T}_{\epsilon,\lambda})$ is continuous and strictly decreasing  in $\lambda\in \left[ 0, \infty\right) $;

	 	\item [\rm(ii)]  
	 	\[
	 	\lim\limits_{\epsilon\to 0^{+}} s(\mathcal{T}_{\epsilon,\lambda}) = \gamma(\lambda):=\max\limits_{1\le i\le n} s(\mathcal{T}_{\lambda}^{i}),
	 	\]
	 	where $\mathcal{T}_{\lambda}^{i}: C(\overline \Omega) \to C(\overline \Omega)$ is defined by
	 	\[
	 	\left[ \mathcal{T}_{\lambda}^{i}  \phi \right](x) := \left[ \mathcal{P}_i \phi\right] (x) + h_{i}(x)\phi(x), \quad x\in \overline \Omega.
	 	\]
	 \end{itemize}
\end{lemma}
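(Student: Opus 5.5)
We treat $\mathcal{T}_{\epsilon,\lambda}$ as an operator of the form $L$ from Section 2, with $D(x)=2(I+\epsilon M)$ (a constant nonnegative matrix, since $\epsilon\le\epsilon_0$ gives $1+\epsilon m_{ii}\ge 0$) acting on the kernels $p_i(x,y)e^{-\mu_i(y)}$, and $A(x)=\operatorname{diag}\{h_1(x),\dots,h_n(x)\}$. Under (\textbf{A3}), $I+\epsilon M$ is irreducible for $\epsilon>0$. Hence $\mathcal{T}_{\epsilon,\lambda}+c_0I$ is eventually essentially strongly positive, and Lemma \ref{le2.2} supplies the Collatz--Wielandt characterization of $s(\mathcal{T}_{\epsilon,\lambda})$ without any assumption that a principal eigenvalue exists. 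Both parts are derived from this characterization, together with $s(L)=r(L+c_0I)-c_0$ and positivity/monotonicity.

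For (i), continuity in $\lambda$: by (\textbf{A2}), $\lambda\mapsto\alpha_i(\lambda)=1/\beta_i(\lambda)$ is continuous, so $\mathcal{T}_{\epsilon,\lambda}$ is norm-continuous in $\lambda$. The bound $\|\mathcal{H}_\lambda-\mathcal{H}_{\lambda'}\|\le\delta$ gives $\mathcal{T}_{\epsilon,\lambda'}-\delta I\le \mathcal{T}_{\epsilon,\lambda}\le \mathcal{T}_{\epsilon,\lambda'}+\delta I$ in the sense of positive operators (differences are multiplication by nonnegative diagonal functions). Lemma \ref{le2.2} then gives $|s(\mathcal{T}_{\epsilon,\lambda})-s(\mathcal{T}_{\epsilon,\lambda'})|\le\delta$. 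Indeed, any $\phi\gg0$ with $\mathcal{T}_{\epsilon,\lambda'}\phi\le \mu\phi$ satisfies $\mathcal{T}_{\epsilon,\lambda}\phi\le(\mu+\delta)\phi$, and symmetrically.

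For strict monotonicity, $\beta_i$ is strictly decreasing, so $\alpha_i$ is strictly increasing and, since $\kappa_i>0$, $h_i$ is strictly decreasing in $\lambda$. For $\lambda<\lambda'$ there is $\eta>0$ with $h_i(x;\lambda')\le h_i(x;\lambda)-\eta$ on $\overline\Omega$ for all $i$, by compactness. The same comparison then yields $s(\mathcal{T}_{\epsilon,\lambda'})\le s(\mathcal{T}_{\epsilon,\lambda})-\eta$. This route deliberately avoids eigenfunctions.

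For (ii), note $\mathcal{T}_{\epsilon,\lambda}=\mathcal{T}_{0,\lambda}+\epsilon M\mathcal{P}$, where $\mathcal{T}_{0,\lambda}=\operatorname{diag}\{\mathcal{T}^1_\lambda,\dots,\mathcal{T}^n_\lambda\}$ and $s(\mathcal{T}_{0,\lambda})=\gamma(\lambda)$.

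Upper bound: write $M=M^{+}-\operatorname{diag}\{|m_{ii}|\}$ with $M^+\ge0$. Then $\mathcal{T}_{\epsilon,\lambda}\le \mathcal{T}_{0,\lambda}+\epsilon M^+\mathcal{P}$, so $s(\mathcal{T}_{\epsilon,\lambda})\le s(\mathcal{T}_{0,\lambda}+\epsilon M^{+}\mathcal{P})$ by Gelfand's formula after shifting by $c_0$, as in Proposition \ref{pr3.2}. The right side is dominated by shifting $c_0$ and using upper semicontinuity of the spectral radius... but upper semicontinuity alone does not give convergence for positive operators. A cleaner route is to use Lemma \ref{le2.2} on each scalar block $\mathcal{T}^i_\lambda$, which is eventually essentially strongly positive since $p_i(x,x)>0$. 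For any $\eta>0$, choose $\phi_i\gg0$ with $\mathcal{T}^i_\lambda\phi_i\le(\gamma(\lambda)+\eta)\phi_i$. With $\phi=(\phi_1,\dots,\phi_n)\gg0$, we get $\mathcal{T}_{\epsilon,\lambda}\phi\le(\gamma+\eta+C\epsilon)\phi$, where $C$ depends on $\|\mathcal{P}\|$, $M$ and $\min\phi_i$. Lemma \ref{le2.2} then gives $s(\mathcal{T}_{\epsilon,\lambda})\le\gamma+\eta+C\epsilon$.

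Lower bound: for the index $i$ attaining $\gamma$, take $\psi\gg0$ in $C(\overline\Omega)$ with $\mathcal{T}^i_\lambda\psi\ge(\gamma-\eta)\psi$. Put $\phi=(\delta,\dots,\psi,\dots,\delta)$ with small constants $\delta>0$ in the other components. Cross terms are nonnegative, while the diagonal loss is $-\epsilon|m_{jj}|\mathcal{P}_j\phi_j$, which is $O(\epsilon)$ relative to $\phi_j$. For $j\ne i$ the ratio $[\mathcal{T}_{\epsilon,\lambda}\phi]_j/\phi_j$ may be below $\gamma$. The fix is to use the positive eigen-like structure instead: $s(\mathcal{T}_{\epsilon,\lambda})\ge s$ of the operator obtained by dropping all off-diagonal (nonnegative) couplings. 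That operator is $\operatorname{diag}\{\mathcal{T}^j_\lambda-\epsilon|m_{jj}|\mathcal{P}_j\}$, again compared via Gelfand's formula as in Proposition \ref{pr3.2}. Its spectral bound is $\max_j s(\mathcal{T}^j_\lambda-\epsilon|m_{jj}|\mathcal{P}_j)\ge\gamma(\lambda)-\epsilon C'$, by the scalar version of the continuity argument above.

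The main obstacle is exactly this lower bound, since a naive test function fails in the components $j\ne i$. The comparison with the decoupled operator together with Proposition \ref{pr3.2}-type monotonicity is what I expect to resolve it. Letting $\epsilon\to0^+$ and then $\eta\to0$ gives $\lim_{\epsilon\to0^+}s(\mathcal{T}_{\epsilon,\lambda})=\gamma(\lambda)$.
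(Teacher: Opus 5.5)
Your proposal is correct. For continuity in (i) and for the lower bound in (ii) it follows the paper's proof. In continuity, a uniformly small change in $\mathcal{H}$ is an order perturbation by $\pm\delta I$. In the lower bound, dropping the nonnegative off-diagonal couplings gives $s(\mathcal{T}_{\epsilon,\lambda})\ge\max_j s\bigl((1+\epsilon m_{jj})\mathcal{P}_j+h_j\bigr)$, which is then handled by a scalar Collatz--Wielandt test function.

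You depart from the paper in two places.
\begin{itemize}
\item \emph{Strict monotonicity.} The paper argues by contradiction. If $s(\mathcal{T}_{\epsilon,\lambda_1})=s(\mathcal{T}_{\epsilon,\lambda_2})$ with $\lambda_1>\lambda_2$, then $s(\mathcal{T}_{\epsilon,\lambda_1})>\max_{i,x}h_i(x;\lambda_1)$. Proposition~\ref{pr2.1} and Theorem~\ref{th2.1} then give a principal eigenfunction $\psi\gg0$, and testing $\mathcal{T}_{\epsilon,\lambda_2}$ on $\psi$ yields a strict inequality. Your uniform gap $\mathcal{T}_{\epsilon,\lambda'}\le\mathcal{T}_{\epsilon,\lambda}-\eta I$ gives the quantitative bound $s(\mathcal{T}_{\epsilon,\lambda'})\le s(\mathcal{T}_{\epsilon,\lambda})-\eta$ directly, with no eigenfunction.
\item \emph{Upper bound in (ii).} The paper simply cites upper semicontinuity of the spectrum (Kato, Section IV.3) to get $\limsup_{\epsilon\to0^+}s(\mathcal{T}_{\epsilon,\lambda})\le\gamma(\lambda)$. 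So your aside dismissing upper semicontinuity is too pessimistic: it is exactly enough for this half and only fails for the lower bound. Your block-diagonal supersolution instead gives the explicit estimate $\gamma(\lambda)+\eta+C(\eta)\epsilon$.
\end{itemize}

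What your route buys is robustness. Every step is an order comparison (monotonicity of the spectral radius of positive operators on the Banach lattice $X$, with $s=r-c$) plus Collatz--Wielandt for the scalar blocks $\mathcal{T}^i_\lambda$. So neither (\textbf{A3}) nor eventual strong positivity of the coupled operator $\mathcal{T}_{\epsilon,\lambda}$ is actually needed. This sidesteps the endpoint $\epsilon=\epsilon_0$, where some $1+\epsilon_0 m_{ii}=0$ and the standing hypothesis $d_{ii}>0$ on $L$ fails. It also yields Theorem~\ref{th6.1} for reducible $M$ without the block reduction and Proposition~\ref{pr3.2}.

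One wording fix: the lower bound is not literally ``the scalar version of the continuity argument.'' The term $-\epsilon|m_{jj}|\mathcal{P}_j$ is an integral operator, not a multiplication operator, so it is not bounded below by $-C\epsilon I$ in the order. There are two clean ways to close this.
\begin{itemize}
\item Test on a fixed $\psi\gg0$ with $\mathcal{T}^j_\lambda\psi\ge(\gamma(\lambda)-\eta)\psi$. Then $C'=|m_{jj}|\max_{\overline\Omega}(\mathcal{P}_j\psi/\psi)$ depends on $\eta$. This is what the paper does, and it is harmless with your order of limits.
\item Use the order comparison $(1-t)\mathcal{P}_j+h_j\ge(1-t)\mathcal{T}^j_\lambda+t\min_{\overline\Omega}h_j$ with $t=\epsilon|m_{jj}|$. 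This gives a uniform $C'$.
\end{itemize}
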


\begin{proof}
(i) Let $\lambda_{0}\in [0, \infty)$ be fixed. By Lemma  \ref{le2.2}, for any $\delta > 0$, there exist $\phi \in \operatorname{Int} (X^{+})$ and  $\varphi \in \operatorname{Int} (X^{+})$ such that
\[
\mathcal{T}_{\epsilon,\lambda_{0}} \phi \le s(\mathcal{T}_{\epsilon,\lambda_{0}}) \phi + \frac{1}{2}\delta \phi, \qquad 
\mathcal{T}_{\epsilon,\lambda_{0}} \varphi \ge s(\mathcal{T}_{\epsilon,\lambda_{0}}) \varphi - \frac{1}{2}\delta \varphi.
\]
Denote $\overline \kappa = \max\limits_{1\le i\le n} \max\limits_{x\in\overline{\Omega}}\kappa_{i}(x)$. For such  $\delta$, there exists $\delta^{\ast}$ such that for all $\left|\lambda - \lambda_{0}\right|\le\delta^{\ast} $, 
\[ 
\left| \alpha_{i}(\lambda)-\alpha_{i}(\lambda_{0}) \right| \le \frac{\delta}{2 \overline \kappa} \quad  \text{ for } 1\le i\le n.
\] 
Then, for  $\left|\lambda - \lambda_{0}\right|\le\delta^{\ast} $, 
\[
\mathcal{T}_{\epsilon,\lambda} \phi \le s(\mathcal{T}_{\epsilon,\lambda_{0}}) \phi + \delta \phi, \qquad 
 \mathcal{T}_{\epsilon,\lambda} \varphi \ge s(\mathcal{T}_{\epsilon,\lambda_{0}}) \varphi - \delta \varphi.
\]
It follows that $\left| s(\mathcal{T}_{\epsilon,\lambda})-s(\mathcal{T}_{\epsilon,\lambda_{0}})\right| \le \delta$ whenever  $\left|\lambda - \lambda_{0}\right|\le\delta^{\ast} $.

It is easy to verify that $ s(\mathcal{T}_{\epsilon,\lambda})$ is nonincreasing in $\lambda$. Assume, for contradiction, that there exists $\lambda_{1}>\lambda_{2}$ such that $s(\mathcal{T}_{\epsilon,\lambda_{1}})=s(\mathcal{T}_{\epsilon,\lambda_{2}}) $. Since each $\alpha_i $ strictly increasing for $1\le i \le n$, we have 
\[ 
\max\limits_{1\le i\le n} \max\limits_{x\in\overline{\Omega}}\left\lbrace -1- \alpha_{i}(\lambda_{1}) \kappa_{i}(x) \right\rbrace  < \max\limits_{1\le i\le n} \max\limits_{x\in\overline{\Omega}}\left\lbrace -1- \alpha_{i}(\lambda_{2}) \kappa_{i}(x) \right\rbrace\le s(\mathcal{T}_{\epsilon,\lambda_{2}}) = s(\mathcal{T}_{\epsilon,\lambda_{1}}).
\] 
By Proposition \ref{pr2.1}, there exists $\psi\in$ Int$(X^{+})$ such that $\mathcal{T}_{\epsilon,\lambda_{1}} \psi =  s(\mathcal{T}_{\epsilon,\lambda_{1}}) \psi$. Then 
\[
\mathcal{T}_{\epsilon,\lambda_{2}} \psi \ge  s(\mathcal{T}_{\epsilon,\lambda_{1}}) \psi + c_{0} \psi,
\]
where $c_{0}=\min\limits_{1\le i\le n}\min\limits_{x\in\overline{\Omega}} \left\lbrace  \alpha_{i}(\lambda_{1})\kappa_{i}(x)- \alpha_{i}(\lambda_{2})\kappa_{i}(x)\right\rbrace>0$. Hence $s(\mathcal{T}_{\epsilon,\lambda_{2}})> s(\mathcal{T}_{\epsilon,\lambda_{1}})$, a contradiction.

(ii) Define $\hat{\mathcal{T}}_{\lambda}: X \to X $ by 
\[
\hat{\mathcal{T}}_{\lambda} \phi = (\mathcal{T}_{\lambda}^{1}  \phi_{1}, \dots, \mathcal{T}_{\lambda}^{n}  \phi_{n})^{T}. 
\]
By the  perturbation theory for linear operators(see, e.g., \cite[Section IV.3]{MR203473}), then
\begin{equation}\label{eq6.1}
\limsup\limits_{\epsilon\to 0^{+}} s(\mathcal{T}_{\epsilon,\lambda}) \le  s(\hat{\mathcal{T}}_{\lambda}).
\end{equation}
If $m_{ii}\ge 0$, then $s(\mathcal{T}_{\epsilon,\lambda}) \ge  s(\hat{\mathcal{T}}_{\lambda})$ for any $\epsilon>0$. 
Consequently,
\[
\lim\limits_{\epsilon\to 0^{+}} s(\mathcal{T}_{\epsilon,\lambda}) =  s(\hat{\mathcal{T}}_{\lambda})=\max\limits_{1\le i\le n} s(\mathcal{T}_{\lambda}^{i}).
\]

Next, we discuss the case where $m_{ii}<0$.
Define $\hat{\mathcal{T}}_{\epsilon,\lambda}^{i} : C(\overline{\Omega}) \to C(\overline{\Omega})$ by 
\[
\left[ \hat{\mathcal{T}}_{\epsilon,\lambda}^{i} \phi\right](x)  = \left( 1+ \epsilon m_{ii}\right) \left[ \mathcal{P}_i \phi \right](x) + h_{i}(x)\phi(x), \quad x\in \overline{\Omega},
\] 
and set $\hat{\mathcal{T}}_{\epsilon,\lambda}= (\hat{\mathcal{T}}_{\epsilon,\lambda}^{1},\dots, \hat{\mathcal{T}}_{\epsilon,\lambda}^{n})^{T} $.
It is easy to verify that 
\begin{equation}\label{eq6.2}
	s({\mathcal{T}}_{\epsilon,\lambda})\ge s(\hat{\mathcal{T}}_{\epsilon,\lambda}). 
\end{equation}
For any $\delta>0$, there exists $\phi\in$Int$(X^{+})$ such that
\[
{\mathcal{T}}_{\lambda}^{i}\phi_i\ge s({\mathcal{T}}_{\lambda}^{i})\phi_i -\frac{\delta}{2}\phi_i, \qquad  1\le i\le n.
\]
Then 
\[
\hat{\mathcal{T}}_{\epsilon,\lambda}^{i} \phi_{i}\ge s({\mathcal{T}}_{\lambda}^{i})\phi_i -\frac{\delta}{2}\phi_i + \epsilon m_{ii} \phi_{i}\max\limits_{x \in \overline{\Omega}}\frac{\left[ \mathcal{P}_i \phi_{i}\right] (x) }{\phi_{i}(x)}, \qquad  1\le i\le n.
\]
It follows that 
\[
\liminf\limits_{\epsilon\to 0^{+}} s(\hat{\mathcal{T}}_{\epsilon,\lambda}^{i}) \ge s({\mathcal{T}}_{\lambda}^{i}) - \delta, \qquad  1\le i\le n.
\]
Since $\delta>0$ is arbitrary, we obtain $\liminf\limits_{\epsilon\to 0^{+}} s(\hat{\mathcal{T}}_{\epsilon,\lambda}^{i}) \ge s({\mathcal{T}}_{\lambda}^{i})$ for each $ 1\le i\le n$. This implies
\begin{equation}\label{eq6.3}
\liminf\limits_{\epsilon\to 0^{+}} s(\hat{\mathcal{T}}_{\epsilon,\lambda}) \ge s(\hat{\mathcal{T}}_{\lambda}).
\end{equation}
Combining \eqref{eq6.1}, \eqref{eq6.2}, and \eqref{eq6.3} yields the desired conclusion.
\end{proof}

\begin{theorem}\label{th6.1}
	Assume that {\rm(\textbf{A1})} and {\rm(\textbf{A2})} hold. The following conclusions hold:
	
	\begin{itemize}
		\item [\rm(i)] For each fixed $  0 < \epsilon \leq \epsilon_0 $, $s(\mathcal{T}_{\epsilon,\lambda})$ is continuous and strictly decreasing  in $\lambda\in \left[ 0, \infty\right) $;

		\item [\rm(ii)]  
		\[
		\lim\limits_{\epsilon\to 0^{+}} s(\mathcal{T}_{\epsilon,\lambda}) = \gamma(\lambda):=\max\limits_{1\le i\le n} s(\mathcal{T}_{\lambda}^{i}),
		\]
		where $\mathcal{T}_{\lambda}^{i}: C(\overline \Omega) \to C(\overline \Omega)$ is defined by
		\[
		\left[ \mathcal{T}_{\lambda}^{i}  \phi \right](x) := \left[ \mathcal{P}_i \phi\right] (x) + h_{i}(x)\phi(x), \quad x\in \overline \Omega.
		\]
	\end{itemize}
\end{theorem}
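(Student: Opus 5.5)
Theorem \ref{th6.1} is Lemma \ref{le6.1} without the irreducibility assumption (\textbf{A3}). The plan is to reduce it to Lemma \ref{le6.1} through the block decomposition of Section 3. Write $\mathcal{T}_{\epsilon,\lambda} = D\mathcal{J}+A$ with nonlocal part $(I+\epsilon M)\mathcal{P}$ and local part $\mathcal{H}$. Here $D=I+\epsilon M$ (nonnegative since $\epsilon\le\epsilon_0$, with diagonal $1+\epsilon m_{ii}$) and $A=\operatorname{diag}\{h_i\}$. Since $M$ is cooperative, there is a permutation $P$ that brings $M$ into block lower triangular (Frobenius normal) form, with irreducible diagonal blocks $M_{kk}$ (possibly $1\times 1$). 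Then $PDP^T$ and $PAP^T$ are block lower triangular, and each diagonal subsystem
\[
\mathcal{T}^{(k)}_{\epsilon,\lambda}=(I+\epsilon M_{kk})\mathcal{P}^{(k)}+\mathcal{H}^{(k)}
\]
satisfies (\textbf{A1})--(\textbf{A3}) on its own index set $\mathcal{S}_k$. For $1\times1$ blocks, irreducibility is vacuous and the operator is scalar, which is eventually essentially strongly positive as shown in Proposition \ref{pr3.1}. Proposition \ref{pr3.2} then gives
\[
s(\mathcal{T}_{\epsilon,\lambda})=\max_{k} s(\mathcal{T}^{(k)}_{\epsilon,\lambda}).
\]

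For part (i), apply Lemma \ref{le6.1}(i) to each block. The argument there only uses Lemma \ref{le2.2} and Proposition \ref{pr2.1} together with strong positivity of the principal eigenfunction, and these are available for each block. So each $s(\mathcal{T}^{(k)}_{\epsilon,\lambda})$ is continuous and strictly decreasing in $\lambda$. A finite maximum of continuous, strictly decreasing functions is again continuous and strictly decreasing, which gives (i).

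For part (ii), Lemma \ref{le6.1}(ii) on each block gives
\[
\lim_{\epsilon\to0^+}s(\mathcal{T}^{(k)}_{\epsilon,\lambda})=\max_{i\in\mathcal{S}_k}s(\mathcal{T}^i_\lambda).
\]
Taking the maximum over $k$ (a finite maximum commutes with the limit) yields $\gamma(\lambda)$. One subtlety is that the block structure must not depend on $\epsilon$. This holds because the zero pattern of $\epsilon M$ is the same for all $\epsilon>0$, so a single permutation $P$ works for every $\epsilon\in(0,\epsilon_0]$.

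The main obstacle is verifying that the blocks are eventually essentially strongly positive in the sense that Lemma \ref{le2.2} requires. The Frobenius form of $M$ makes each $M_{kk}$ irreducible, hence weakly irreducible as a constant matrix. The Proposition following Lemma \ref{le2.1} then applies to $D_{kk}=I+\epsilon M_{kk}$, using $p_i(x,x)>0$. Alternatively, one can invoke Proposition \ref{pr3.1} directly, which produces the required decomposition for any $c_0$ with $\mathcal{T}_{\epsilon,\lambda}+c_0I$ positive; this avoids checking irreducibility by hand, but one must then check that the resulting blocks satisfy the hypotheses used in Lemma \ref{le6.1}. A separate degenerate case is a $1\times1$ block with $m_{ii}=-1/\epsilon$ at $\epsilon=\epsilon_0$, where the nonlocal term vanishes and the block operator reduces to multiplication by $h_i$. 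There $s=\max h_i$, which is still continuous and strictly decreasing in $\lambda$, so (i) holds directly. For (ii), $\epsilon\to0^+$ keeps $1+\epsilon m_{ii}>0$, so the limit is unaffected.
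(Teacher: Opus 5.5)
Your proposal is correct and matches the paper's proof. Both bring $M$ into Frobenius normal form by a permutation, use Proposition~\ref{pr3.2} to write $s(\mathcal{T}_{\epsilon,\lambda})$ as the maximum over the irreducible diagonal blocks, and apply Lemma~\ref{le6.1} to each block; your extra checks (a single permutation working for all $\epsilon>0$, and the degenerate $1\times 1$ block with $1+\epsilon_0 m_{ii}=0$) are details the paper leaves implicit.
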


\begin{proof}
	If $M$ is irreducible, the conclusion has already been proved in Lemma \ref{le6.1}. If $M$ is reducible, then by  \cite[section 2.3]{MR1298430}, there exists a permutation matrix $P$ such that 
	\[
	PMP^{T}=\begin{pmatrix} M_{11}& 0 & \cdots & 0 \\ M_{21} & M_{22} & \cdots & 0 \\ \vdots & \vdots & \ddots & \vdots \\ M_{\tilde{n}1}& M_{\tilde{n}2} & \cdots & M_{\tilde{n}\tilde{n}}\end{pmatrix},
	\]
	where each $A_{ii}$ is irreducible for $1\le i \le \tilde n$. Combining Proposition \ref{pr3.2} with Lemma \ref{le6.1} then yields the desired conclusions. 
\end{proof}

\begin{remark}
Compare to 	 \cite[Theorem 1.1(ii)(iii)]{MR4612706}, Theorem \ref{th6.1} removes the key assumption
\begin{equation}\label{eq4.5}
\frac{1}{\kappa_i - \min_{\Omega} \kappa_i} \notin L^1(\Omega) \quad \text{for all } 1 \leq i \leq n,
\end{equation}
that guarantees the existence of the principal eigenvalue, while still yielding the same spectral conclusions. Namely, we obtain results on the spectral bound even when the principal eigenvalue does not exist. This situation better reflects the nature of nonlocal problems, where a principal eigenvalue may be absent. Furthermore,  we extend the results to the case where $M$ is reducible.
\end{remark}

\begin{remark}
The conclusions of Theorem \ref{th6.1} also holds for the more general case without requiring the condition $m_{ii}=-\sum\limits_{j\neq i}m_{ji}$.
\end{remark}

Next, we analyze the steady states of the system \eqref{eq4.3}. We first consider the case without gene mutations, i.e., $\epsilon=0$. Consider the more general steady state equation
\begin{equation}\label{eq4.7}
	\begin{cases} 
		\displaystyle 2 \int_{\Omega} p(x, y) e^{-\mu(y)} \theta_{i}(y)\, \mathrm{d}y - \theta_{i}(x) - \alpha_{i}(s) \kappa_{i}(x) \theta_{i}(x) = 0, & x \in \overline{\Omega}, 1\le i \le n, \\[1em] 
		\displaystyle s = \sum\limits_{i = 1}^{n}\int_{\Omega} \zeta(x) \theta_{i}(x) \, \mathrm{d}x.
	\end{cases}
\end{equation}
In this case, the components of the system are not coupled,  and we consider the following equation for each component:
\begin{equation}\label{eq4.4}
 \begin{cases} 
 	\displaystyle 2 \int_{\Omega} p(x, y) e^{-\mu(y)} \theta(y)\, \mathrm{d}y - \theta(x) - \alpha_{i}(s_{i}) \kappa_{i}(x) \theta(x) = 0, & x \in \overline{\Omega}, \\[1em] 
 	\displaystyle s_{i} = \int_{\Omega} \zeta(x) \theta(x) \, \mathrm{d}x,
 \end{cases}
\end{equation}
and its associated eigenvalue problem $\mathcal{T}_{\lambda}^{i}$: 
\[
\left[ {\mathcal{T}}_{\lambda}^{i}\phi\right] (x)=\left[ \mathcal{P}_i \phi\right] (x) -\phi(x)- \alpha_{i}(\lambda)\kappa_{i}(x)\phi(x), \quad x\in \overline{\Omega}. 
\]
To simplify, we replace $\alpha_{i}$ by $\lambda$ in $\mathcal{T}_{\lambda}^{i}$ and denote denote the resulting operator by $\tilde{\mathcal{T}}_{\lambda}^{i}$.
Denote $Y:=C(\overline{\Omega})$ and $Y^{+}:=(\overline\Omega, \mathbb{R_{+}})$.

\begin{proposition}\label{pr4.3}
 The following statements valid:
 
 \begin{itemize}
 	\item [\rm(i)] If  $s( \mathcal{P}_i)\le1$, then 
 $s(\tilde{\mathcal{T}}_{\lambda}^{i})\le 0$ for all $ \lambda>0$. 
 
 \item [\rm(ii)] If $s( \mathcal{P}_i)>1$, then there exists a unique $\lambda^{\ast}_{i}>0$ such that $s(\tilde{\mathcal{T}}_{\lambda^{\ast}_{i}}^{i})=0$. Moreover,  $\tilde{\mathcal{T}}_{\lambda^{\ast}_{i}}^{i}$ admits the principal eigenvalue, and 
 \[
 \begin{aligned}
 	\lambda^{\ast}_{i}= & \inf_{\phi \in \mathrm{Int}(Y^{+})}\sup_{x \in \Omega}\dfrac{2 \int_{\Omega} p_i(x, y)e^{-\mu_i(y)}\phi(y) \, \mathrm{d}y -\phi(x)}{\kappa_{i}(x)\phi(x)}\\
 	= & \sup_{\phi \in  \mathrm{Int}(Y^{+})} \inf_{x \in \Omega} \dfrac{2 \int_{\Omega} p_i(x, y)e^{-\mu_i(y)}\phi(y) \, \mathrm{d}y -\phi(x)}{\kappa_{i}(x)\phi(x)}.
 \end{aligned}
 \]
\end{itemize} 	
\end{proposition}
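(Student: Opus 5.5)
The operator here is $\tilde{\mathcal{T}}_{\lambda}^{i}\phi=\mathcal{P}_i\phi-\phi-\lambda\kappa_i\phi$ on $Y$. It is scalar, and since $p_i(x,x)>0$ the operator $\tilde{\mathcal{T}}_{\lambda}^{i}+cI$ is eventually essentially strongly positive for large $c$ (the case $n=1$ in the proof of Proposition \ref{pr3.1}). Hence Lemma \ref{le2.2}, Theorem \ref{th2.1} and Proposition \ref{pr2.1} all apply to $\tilde{\mathcal{T}}_{\lambda}^{i}$ and to $\mathcal{P}_i-I$. The plan is to study the function $g(\lambda):=s(\tilde{\mathcal{T}}_{\lambda}^{i})$ on $[0,\infty)$. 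We have $g(0)=s(\mathcal{P}_i)-1$, because $\mathcal{P}_i$ is compact and positive and shifting by $-I$ shifts the spectrum. The claims then follow from the monotonicity, continuity and limit behaviour of $g$.

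\textbf{Properties of $g$.} Continuity and strict decrease are proved exactly as in Lemma \ref{le6.1}(i), using the sub/supersolution characterization of Lemma \ref{le2.2} and the positivity of $\kappa_i$.
\begin{itemize}
\item Monotonicity: $\tilde{\mathcal{T}}_{\lambda}^{i}$ is pointwise decreasing in $\lambda$ on positive functions, so $g$ is nonincreasing.
\item Continuity: perturbing $\lambda$ by $\eta$ changes $\tilde{\mathcal{T}}_{\lambda}^{i}\phi$ by at most $\eta\max\kappa_i\,\phi$. Lemma \ref{le2.2} then gives $|g(\lambda)-g(\lambda_0)|\le |\lambda-\lambda_0|\max\kappa_i$.
\item Behaviour at infinity: $\mathcal{P}_i$ is bounded and $\kappa_i\ge\min\kappa_i>0$, so taking $\phi\equiv1$ in the inf-sup formula gives $g(\lambda)\le \|\mathcal{P}_i\|-1-\lambda\min\kappa_i\to-\infty$.
\end{itemize}

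\textbf{Part (i).} If $s(\mathcal{P}_i)\le1$, then $g(\lambda)\le g(0)=s(\mathcal{P}_i)-1\le0$ for all $\lambda>0$.

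\textbf{Part (ii).} If $s(\mathcal{P}_i)>1$, then $g(0)>0$ and $g(\lambda)\to-\infty$. By continuity there is some $\lambda^\ast_i$ with $g(\lambda^\ast_i)=0$.

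Uniqueness requires strict decrease. Strictness is easiest where a principal eigenfunction exists. Suppose $g(\lambda_1)=g(\lambda_2)=0$ with $\lambda_1>\lambda_2$. Since $\max_x(-1-\lambda_1\kappa_i(x))<-1<0=g(\lambda_1)$, Proposition \ref{pr2.1} and Theorem \ref{th2.1} give a strongly positive eigenfunction $\psi$ of $\tilde{\mathcal{T}}_{\lambda_1}^{i}$. Then
\[
\tilde{\mathcal{T}}_{\lambda_2}^{i}\psi\ge \bigl(0+(\lambda_1-\lambda_2)\min\kappa_i\bigr)\psi,
\]
so Lemma \ref{le2.2} yields $g(\lambda_2)>0$, a contradiction.

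The same observation shows that $\tilde{\mathcal{T}}_{\lambda^\ast_i}^{i}$ admits the principal eigenvalue. Indeed $s(\tilde{\mathcal{T}}_{\lambda^\ast_i}^{i})=0>-1-\lambda^\ast_i\min\kappa_i\ge\max_x s(A(x))$, and Proposition \ref{pr2.1} applies.

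\textbf{Variational formula.} Set
\[
R(\phi)(x):=\frac{[\mathcal{P}_i\phi](x)-\phi(x)}{\kappa_i(x)\phi(x)},
\]
so that $\tilde{\mathcal{T}}_{\lambda}^{i}\phi\le0$ if and only if $\sup_x R(\phi)\le\lambda$. Write $\underline\lambda:=\inf_{\phi}\sup_x R(\phi)$, with $\phi$ ranging over $\mathrm{Int}(Y^+)$.

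For the upper bound $\underline\lambda\le\lambda^\ast_i$, take the principal eigenfunction $\psi$ of $\tilde{\mathcal{T}}_{\lambda^\ast_i}^{i}$. It satisfies $R(\psi)\equiv\lambda^\ast_i$, so $\underline\lambda\le\lambda^\ast_i$.

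For the lower bound, suppose some $\phi\in\mathrm{Int}(Y^+)$ has $\sup_x R(\phi)=\lambda<\lambda^\ast_i$. Then $\tilde{\mathcal{T}}_{\lambda}^{i}\phi\le0$, so Lemma \ref{le2.2} gives $g(\lambda)\le0$. But strict decrease of $g$ forces $g(\lambda)>g(\lambda^\ast_i)=0$, a contradiction. Hence $\underline\lambda=\lambda^\ast_i$.

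The sup-inf formula follows symmetrically, using $\tilde{\mathcal{T}}_{\lambda}^{i}\phi\ge0$ and the sup characterization in Lemma \ref{le2.2}.

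The main obstacle I expect is strict decrease of $g$ in general, i.e.\ away from the zero level, where a principal eigenfunction may fail to exist. This is only needed near the level $0$, where the comparison with $-1-\lambda\kappa_i$ above guarantees a principal eigenvalue, so the argument should go through.
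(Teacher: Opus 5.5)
Your proof is correct and follows essentially the same route as the paper: monotonicity and continuity of $s(\tilde{\mathcal{T}}_{\lambda}^{i})$ in $\lambda$, a sign change giving a unique zero $\lambda_i^\ast$, Proposition~\ref{pr2.1} for the principal eigenvalue, and Lemma~\ref{le2.2} combined with strict monotonicity for both variational formulas. The differences are minor.
\begin{itemize}
\item The paper cites Theorem~\ref{th6.1}(i) for strict decrease, whereas you re-derive it at the zero level via the principal eigenfunction.
\item You use the explicit test function $\phi\equiv 1$ instead of perturbation theory for the behaviour at large $\lambda$.
\item For one inequality in each formula you use the principal eigenfunction rather than supersolutions at $\lambda>\lambda_i^\ast$. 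This also shows that the infimum and supremum are attained.
\end{itemize}
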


\begin{proof}
(i) By Theorem \ref{th6.1}(i), the conclusion follows.

(ii) By the  perturbation theory for linear operators(see, e.g., \cite[Section IV.3]{MR203473}), there exists $\lambda_{0}>0$ such that $s(\tilde{\mathcal{T}}_{\lambda_{0}}^{i})>0$. Similarly, there exists $\lambda_{1}>0$ such that $s(\frac{1}{\lambda_{1}}\tilde{\mathcal{T}}_{\lambda_{1}}^{i})<0$, which implies $s(\tilde{\mathcal{T}}_{\lambda_{1}}^{i})<0$. According to Theorem \ref{th6.1}(i),  there exists a unique $\lambda^{\ast}_{i}$ satisfying  $s(\tilde{\mathcal{T}}_{\lambda^{\ast}_{i}}^{i})=0$.
Notice that $\max\limits_{x \in \overline{\Omega}} \left\lbrace -1 - \lambda^{\ast}_{i} \kappa_{i}(x)\right\rbrace <0 = s(\tilde{\mathcal{T}}_{\lambda^{\ast}_{i}}^{i})$. By Proposition \ref{pr2.1},  $\tilde{\mathcal{T}}_{\lambda^{\ast}_{i}}^{i}$ admits the principal eigenvalue.

Set 
\[
\lambda_{1}=\inf_{\phi \in \mathrm{Int}(Y^{+})}\sup_{x \in \Omega}\dfrac{2 \int_{\Omega} p_i(x, y)e^{-\mu_i(y)}\phi(y) \, \mathrm{d}y -\phi(x)}{\kappa_{i}(x)\phi(x)}, 
\]
and 
\[
\lambda_{2}=\sup_{\phi \in  \mathrm{Int}(Y^{+})} \inf_{x \in \Omega} \dfrac{2 \int_{\Omega} p_i(x, y)e^{-\mu_i(y)}\phi(y) \, \mathrm{d}y -\phi(x)}{\kappa_{i}(x)\phi(x)}.
\]
It is easy to see that 
\[
\lambda_{1}= \inf \left\lbrace \lambda>0: \exists \phi \in \mathrm{Int}(Y^{+})\ s.t. \ \tilde{\mathcal{T}}_{\lambda}^{i} \phi \le 0\right\rbrace ,
\]
and 
$\lambda_{2}= \sup  \left\lbrace \lambda>0: \exists \phi \in \mathrm{Int}(Y^{+})\  s.t. \ \tilde{\mathcal{T}}_{\lambda}^{i} \phi \ge 0\right\rbrace $.
Since $s(\tilde{\mathcal{T}}_{\lambda}^{i})$ is  strictly decreasing in $\lambda$, then 
\[
s(\tilde{\mathcal{T}}_{\lambda}^{i}) <0 \text{ for } \lambda> \lambda_{i}^{\ast}, \qquad  s(\tilde{\mathcal{T}}_{\lambda}^{i}) >0 \text{ for } \lambda<\lambda_{i}^{\ast}.
\]
By Lemma \ref{le2.2}, for any $\lambda>\lambda_{i}^{\ast}$, there exists $\psi\in\mathrm{Int}(Y^{+})$ such that  $\tilde{\mathcal{T}}_{\lambda}^{i} \psi\le 0 $.
Consequently, $\lambda_{1}\le \lambda_{i}^{\ast}$. If $\lambda_{1}< \lambda_{i}^{\ast} $, then by the definition of $ \lambda_{1}$, there exist $\lambda^{'}\in (\lambda_{1}, \lambda_{i}^{\ast})$ and $\psi\in \mathrm{Int}(Y^{+})$ such that $\tilde{\mathcal{T}}_{\lambda^{'}}^{i} \psi\le 0 $, which yields $s(\tilde{\mathcal{T}}_{\lambda^{'}}^{i})\le 0$, a contradiction. Hence, $ \lambda_{1}=\lambda_{i}^{\ast}$.

Similarly, for any $\lambda<  \lambda_{i}^{\ast}$, there exists $\psi\in \mathrm{Int}(Y^{+})$ such that $\tilde{\mathcal{T}}_{\lambda}^{i} \psi> 0 $, which gives $\lambda_{i}^{\ast} \le \lambda_{2} $. If $\lambda_{i}^{\ast} < \lambda_{2} $, then there exist $\lambda^{'}\in (\lambda_{i}^{\ast},\lambda_{2})$ and $\psi\in \mathrm{Int}(Y^{+})$ such that $\tilde{\mathcal{T}}_{\lambda^{'}}^{i} \psi\ge 0 $, implying  $s(\tilde{\mathcal{T}}_{\lambda^{'}}^{i})\ge 0$, again a contradiction. Thus  $ \lambda_{2}=\lambda_{i}^{\ast}$.
\end{proof}

\begin{proposition}\label{pr4.2}
	 Assume that   {\rm(\textbf{A1})} and {\rm(\textbf{A2})} hold. The following conclusions hold:
	\begin{itemize}
		\item [\rm(i)] If $s( \mathcal{T}_{0}^{i})<0$ or $ s( \mathcal{T}_{+\infty}^{i})  > 0$, then $0$ is the only nonnegative solution of \eqref{eq4.4} in $C(\overline{\Omega})$;
		
		\item [\rm(ii)] If $s( \mathcal{T}_{+\infty}^{i}) < 0 < s( \mathcal{T}_{0}^{i})$, then \eqref{eq4.4}  has a unique positive solution, denoted by $\theta_{i}$.

	\end{itemize}
\end{proposition}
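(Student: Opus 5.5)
The idea is to reduce \eqref{eq4.4} to a scalar fixed-point problem in the single unknown $s_i$, and then use the monotonicity and continuity of $s(\mathcal{T}^{i}_{\lambda})$ from Theorem \ref{th6.1}(i). Let $\theta\ge 0$ be a nontrivial solution and put $s_i=\int_\Omega\zeta\theta\,\mathrm{d}x$. Then the first equation reads $\mathcal{T}^{i}_{s_i}\theta=0$ with $\theta>0$. Since $p_i(x,x)>0$, iterating the positive operator $\mathcal{P}_i$ shows that $\theta$ is strictly positive on $\overline\Omega$. We are in the scalar case, so $\mathcal{T}^i_\lambda+cI$ is eventually essentially strongly positive for large $c$, and Lemma \ref{le2.2} applies with $\phi=\theta\in\mathrm{Int}(Y^+)$. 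It gives both $s(\mathcal{T}^i_{s_i})\le 0$ and $s(\mathcal{T}^i_{s_i})\ge 0$, hence $s(\mathcal{T}^i_{s_i})=0$.

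For (i), we take $s(\mathcal{T}^i_0)$ and $s(\mathcal{T}^i_{+\infty})$ to mean the limits of $s(\mathcal{T}^i_\lambda)$ as $\lambda\to0^+$ and $\lambda\to\infty$. These limits exist because $\lambda\mapsto s(\mathcal{T}^i_\lambda)$ is strictly decreasing and continuous by Theorem \ref{th6.1}(i). Strict monotonicity then puts $s(\mathcal{T}^i_\lambda)$ strictly between the two limits for every $\lambda\in[0,\infty)$. If $s(\mathcal{T}^i_0)<0$, then $s(\mathcal{T}^i_{s_i})<0$; if $s(\mathcal{T}^i_{+\infty})>0$, then $s(\mathcal{T}^i_{s_i})>0$. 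Either way this contradicts $s(\mathcal{T}^i_{s_i})=0$, so $\theta\equiv0$.

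For (ii), strict monotonicity and continuity give a unique $\lambda^*>0$ with $s(\mathcal{T}^i_{\lambda^*})=0$. From $\alpha_i(\lambda^*)\kappa_i>0$ we get $\max_x h_i(x)<0=s(\mathcal{T}^i_{\lambda^*})$. Proposition \ref{pr2.1} together with Theorem \ref{th2.1} then provides an eigenfunction $\phi\in\mathrm{Int}(Y^+)$ with $\mathcal{T}^i_{\lambda^*}\phi=0$. Since $\zeta\ge0$, $\zeta\not\equiv0$, we have $\int_\Omega\zeta\phi>0$, so we may normalize $\phi$ so that $\int_\Omega\zeta\phi=\lambda^*$. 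Then $\theta_i:=\phi$ solves \eqref{eq4.4} with $s_i=\lambda^*$. For uniqueness, any positive solution $\theta$ satisfies $s(\mathcal{T}^i_{s_i})=0$, so $s_i=\lambda^*$. Theorem \ref{th2.1}(ii)--(iii) makes $0$ algebraically simple with a unique positive eigendirection, so $\theta=t\phi$ for some $t>0$. The constraint $\int\zeta\theta=\lambda^*=\int\zeta\phi$ forces $t=1$.

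The main obstacle is bookkeeping rather than analysis. First, the statement uses $p(x,y)$ and $\mu$ in \eqref{eq4.4} but $p_i,\mu_i$ in $\mathcal{T}^i_\lambda$, so the two must be identified. Second, we need $\zeta\ge0$, $\zeta\not\equiv0$ for the normalization and the positivity of $s_i$; if only $\zeta\ge0$ is assumed, the case $\int\zeta\phi=0$ has to be excluded separately. Third, the meaning of $\mathcal{T}^i_{0}$ and $\mathcal{T}^i_{+\infty}$ should be fixed: the limit interpretation above, or direct evaluation using $\alpha_i(0)=1/\beta_i(0)$ and $\alpha_i(+\infty)=1/\lim_{\lambda\to\infty}\beta_i(\lambda)$, which exists by (\textbf{A2}). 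Under the latter reading, continuity of the spectral bound in the potential, via the perturbation argument in the proof of Lemma \ref{le6.1}(i), shows the two readings agree.
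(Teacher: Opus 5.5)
Your argument is correct and follows essentially the paper's route. The paper, via Proposition \ref{pr4.3}(ii) and the argument of Su et al.'s Theorem 3.1, also takes the unique zero of the strictly decreasing spectral bound, gets a principal eigenfunction there from $\max_{\overline\Omega} h_i<0$, and normalizes it by $\int_\Omega\zeta\varphi_i$; its $s_i=\alpha_i^{-1}(\lambda_i^\ast)$ is exactly your $\lambda^\ast$ in the original parametrization. Your use of Lemma \ref{le2.2} with $\phi=\theta$ to force $s(\mathcal{T}^i_{s_i})=0$ for any positive solution cleanly supplies the step the paper leaves to the reference; for uniqueness, rely on Theorem \ref{th2.1}(ii) (one-dimensional kernel), since (iii) is stated only for nonzero eigenvalues.
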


Note that Proposition \ref{pr4.2} can be proved by  Proposition \ref{pr4.3}(ii)  and the arguments in \cite[Theorem 3.1]{MR4612706}, so we omit it here. It should be pointed out that $\theta_{i}$ exists if and only if $\tilde{\mathcal{T}}_{\lambda^{\ast}_{i}}^{i}$ admits a principal eigenvalue, with associated principal eigenfunction denoted by $\varphi_{i}$; moreover,
\[
\theta_{i}(x) = \frac{s_{i} \varphi_{i}(x)}{\int_{\Omega} \zeta(x) \varphi_{i}(x) \, \mathrm{d}x},
\]
where $s_{i} = \alpha_{i}^{-1}(\lambda^{\ast}_{i})$. Here, we remove condition \eqref{eq4.5} in \cite[Theorem 3.1]{MR4612706} and still obtain the same results, since $\tilde{\mathcal{T}}_{\lambda^{\ast}_{i}}^{i}$ admits a principal eigenvalue by Proposition \ref{pr4.3}(ii).

Combining Propositions \ref{pr4.3} and \ref{pr4.2}, we have the following corollary.

\begin{corollary}\label{c4.1}
	 Assume that   {\rm(\textbf{A1})} and {\rm(\textbf{A2})} hold. The following conclusions hold:
	 \begin{itemize}
	 	\item [\rm(i)] If $s( \mathcal{P}_i)\le1$, then $0$ is the only nonnegative solution of \eqref{eq4.4} in $C(\overline{\Omega})$. 
	 	
	 	\item [\rm(ii)] If $s( \mathcal{P}_i)>1$ and
	 	$\lambda_{i}^{\ast} \in \left(  \alpha( 0) , \alpha(+\infty  )\right)$, then \eqref{eq4.4}  has a unique positive solution. 
	 	
	 	\item [\rm(iii)] If  $s( \mathcal{P}_i)>1$ and $\lambda_{i}^{\ast} \not\in \left[  \alpha( 0) , \alpha(+\infty  )\right]$, then $0$ is the only nonnegative solution of \eqref{eq4.4} in $C(\overline{\Omega})$.
	 \end{itemize}
	
\end{corollary}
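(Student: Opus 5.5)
The plan is to combine Proposition \ref{pr4.3} with Proposition \ref{pr4.2}, using the monotonicity of $\alpha_i$ (here $\alpha$) to translate the condition on $\lambda_i^\ast$ into sign conditions on $s(\mathcal{T}_{0}^{i})$ and $s(\mathcal{T}_{+\infty}^{i})$. The relation to use throughout is $\mathcal{T}_{\lambda}^{i}=\tilde{\mathcal{T}}_{\alpha_i(\lambda)}^{i}$. By (\textbf{A2}), $\beta_i$ is strictly decreasing and bounded with a positive lower bound, so $\alpha_i=1/\beta_i$ is strictly increasing. Hence $\alpha_i(0)>0$ and $\alpha_i(+\infty)=\lim_{\lambda\to\infty}\alpha_i(\lambda)$ is finite. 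We interpret $s(\mathcal{T}_{+\infty}^{i})$ as $s(\tilde{\mathcal{T}}_{\alpha_i(+\infty)}^{i})$.

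For (i), I would argue by contradiction. Suppose $\theta>0$ solves \eqref{eq4.4}, and set $s_i=\int_\Omega\zeta\theta\,\mathrm{d}x\ge0$. Then $\tilde{\mathcal{T}}^{i}_{\alpha_i(s_i)}\theta=0$, with $\alpha_i(s_i)>0$. Applying $J_i(x,x)>0$ (here $p_i(x,x)>0$) repeatedly shows that $\theta$ is strongly positive. The sup-characterization of Lemma \ref{le2.2} on $Y$ then gives $s(\tilde{\mathcal{T}}^{i}_{\alpha_i(s_i)})=0$. This contradicts Proposition \ref{pr4.3}(i) only if that result is strict. So I will need a slightly stronger statement: if $s(\mathcal{P}_i)\le 1$, then $s(\tilde{\mathcal{T}}^{i}_{\lambda})<0$ for every $\lambda>0$. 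I expect this to be the main obstacle. I would prove it as follows. First, $s(\tilde{\mathcal{T}}^i_\lambda)\le s(\mathcal{P}_i)-1\le 0$, because $\mathcal{P}_i-I$ dominates. Then, by the strict decrease in Theorem \ref{th6.1}(i), for any $\lambda>0$ we get $s(\tilde{\mathcal{T}}^i_\lambda)<s(\tilde{\mathcal{T}}^i_{\lambda/2})\le0$.

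For (ii) and (iii), assume $s(\mathcal{P}_i)>1$. Proposition \ref{pr4.3}(ii) together with strict monotonicity gives $s(\tilde{\mathcal{T}}^i_\lambda)>0$ for $\lambda<\lambda_i^\ast$ and $s(\tilde{\mathcal{T}}^i_\lambda)<0$ for $\lambda>\lambda_i^\ast$. In case (ii), $\alpha_i(0)<\lambda_i^\ast<\alpha_i(+\infty)$ yields $s(\mathcal{T}^i_{+\infty})<0<s(\mathcal{T}^i_0)$, and Proposition \ref{pr4.2}(ii) gives the unique positive solution. In case (iii), either $\lambda_i^\ast<\alpha_i(0)$, which gives $s(\mathcal{T}^i_0)<0$, or $\lambda_i^\ast>\alpha_i(+\infty)$, which gives $s(\mathcal{T}^i_{+\infty})>0$. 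Proposition \ref{pr4.2}(i) then shows that $0$ is the only nonnegative solution.

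One more point needs checking: Proposition \ref{pr4.2}(i) requires the condition at the limit value $+\infty$. This is fine because $\alpha_i(s)<\alpha_i(+\infty)$ for all finite $s$. So whenever $\lambda_i^\ast>\alpha_i(+\infty)$, the quantity $\alpha_i(s_i)$ can never equal $\lambda_i^\ast$. I would also verify this directly with the same contradiction argument as in (i): a positive solution would force $s(\tilde{\mathcal{T}}^i_{\alpha_i(s_i)})=0$, i.e.\ $\alpha_i(s_i)=\lambda_i^\ast$, which lies outside the range $[\alpha_i(0),\alpha_i(+\infty))$. This direct check also covers (iii) without relying on the omitted proof of Proposition \ref{pr4.2}.
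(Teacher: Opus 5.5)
Your proposal is correct and follows the paper's route, which combines Proposition \ref{pr4.3} with Proposition \ref{pr4.2} through the identity $\mathcal{T}^{i}_{\lambda}=\tilde{\mathcal{T}}^{i}_{\alpha_i(\lambda)}$ and the strict monotonicity of $\alpha_i$. You also make explicit a step the paper leaves implicit: Proposition \ref{pr4.3}(i) as stated only gives $\le 0$, and it must be sharpened to the strict inequality $s(\tilde{\mathcal{T}}^{i}_{\lambda})<s(\tilde{\mathcal{T}}^{i}_{0})=s(\mathcal{P}_i)-1\le 0$ for $\lambda>0$, which is exactly what you supply.
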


Next, we discuss the dynamics of system \eqref{eq4.3} both with and without gene mutations. By applying Theorem \ref{th6.1} and Proposition \ref{pr4.2}, and following the arguments in \cite{MR4612706}, we can establish the dynamics of \eqref{eq4.3}. For completeness and the convenience of the reader, we present the relevant conclusions here, and detailed proofs can be found in \cite{MR4612706}. The main difference from \cite{MR4612706} is the removal of Condition \eqref{eq4.5}, which is no longer required under Theorem \ref{th6.1} and Proposition \ref{pr4.2}.

We first discussion the case without gene mutations.
In view of $n$ eigenvalue problems associated with \eqref{eq4.4} and rearrangement,  
the following cases would be considered:
\begin{itemize}
	\item[\textbf{Case 1}] $s(\mathcal{T}_{0}^{i}) < 0$ for all $1 \le i \le n$;
	
	\item[\textbf{Case 2}] $s(\mathcal{T}_{+\infty}^{i}) > 0$ for all $1 \le i \le n$;
	
	\item[\textbf{Case 3}] $s(\mathcal{T}_{0}^{i}) < 0$ for $1 \le i \le j < n$ and $s(\mathcal{T}_{+\infty}^{i}) > 0$ for $j < i \le n$;
	
	\item[\textbf{Case 4}] $s(\mathcal{T}_{+\infty}^{i}) < 0 < s(\mathcal{T}_{0}^{i})$ for all $1 \le i \le  n$;
	
	\item[\textbf{Case 5}] $s(\mathcal{T}_{+\infty}^{i}) < 0 < s(\mathcal{T}_{0}^{i})$ for all $1 \le i \le j < n$ and $s(\mathcal{T}_{0}^{i}) < 0$ for all $j < i \le n$;
	
	\item[\textbf{Case 6}] $s(\mathcal{T}_{+\infty}^{i}) < 0 < s(\mathcal{T}_{0}^{i})$ for all $1 \le i \le j < n$ and $s(\mathcal{T}_{+\infty}^{i}) > 0$ for all $j < i \le n$.
\end{itemize}
When $n \ge 3$, we have another case:
\begin{itemize}
	\item[\textbf{Case 7}] $s(\mathcal{T}_{+\infty}^{i}) < 0 < s(\mathcal{T}_{0}^{i})$ for all $1 \le i \le j < n-1$, $s(\mathcal{T}_{0}^{i}) < 0$ for all $j < i \le k < n$ and $s(\mathcal{T}_{+\infty}^{i}) > 0$ for all $k < i \le n$.
\end{itemize}

It follows from Proposition \ref{pr4.2} that in \textbf{Cases 4}--\textbf{7} there exists the unique  $\lambda_{i}^{\ast}$ such that $s(\tilde{\mathcal{T}}_{\lambda^{\ast}_{i}}^{i})=0$ for all $1 \le i \le j$. When $j \ge 2$, by rearrangement, we obtain
\[
\lambda_1^{\ast} \ge \lambda_2^{\ast} \ge \cdots \ge \lambda_j^{\ast}.
\]
For convenience, we introduce
\[
e_1 := (1, 0, \dots, 0), \quad e_2 := (0, 1, \dots, 0), \quad \dots, \quad e_n := (0, 0, \dots, 1).
\]
Based on Proposition \ref{pr4.2} and by the same discussion as in \cite[Section 4]{MR4612706}, one can obtain the following conclusions.
\begin{proposition}[{\cite[Theorem 4.7]{MR4612706}}]\label{pr4.8}
Assume that   {\rm(\textbf{A1})} and {\rm(\textbf{A2})} hold. For $\epsilon=0$, the solution $u$ of \eqref{eq4.3}   satisfies the following statements: 
	\begin{itemize}
		\item[\rm(i)] In {\rm\textbf{Case 1}}, there holds
		\[
		\lim_{t \to \infty} u(x,t) = (0, 0, \dots, 0) \quad \text{uniformly in } \overline{\Omega};
		\]
		\item[\rm(ii)] In {\rm\textbf{Case 2}}, there holds
		\[
		\lim_{t \to \infty} u(x,t) = (e_1 + e_2 + \dots + e_n) \infty \quad \text{uniformly in } \overline{\Omega};
		\]
		\item[\rm(iii)] In {\rm\textbf{Case 3}},  {\rm\textbf{Case 6}}, or {\rm\textbf{Case 7}}, let $l = j$ in {\rm\textbf{Case 3}} or {\rm\textbf{Case 6}} and $l = k$ in {\rm\textbf{Case 7}}. Then
		\[
		\lim_{t \to \infty} u(x,t) = (e_{l+1} + e_{l+2} + \dots + e_n) \infty \quad \text{uniformly in } \overline{\Omega};
		\]
		\item[\rm(iv)] In {\rm\textbf{Case 4}} or {\rm\textbf{Case 5}}, if $\lambda_1^{\ast} > \lambda_2^{\ast} > \dots > \lambda_j^{\ast} $, and $\kappa(x)$ is some constant functions, then there holds
		\[
		\lim_{t \to \infty} u(\cdot, t) = \theta_1 e_1 \quad \text{uniformly in } \overline{\Omega}.
		\]
	\end{itemize}
\end{proposition}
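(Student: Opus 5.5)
The plan is to follow the scheme of \cite[Section 4]{MR4612706}. The only inputs that must be replaced are the spectral facts that were derived there from the principal eigenvalue under \eqref{eq4.5}. These are now supplied by Theorem \ref{th6.1}, Proposition \ref{pr4.3} and Proposition \ref{pr4.2}.

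For $\epsilon=0$ the system \eqref{eq4.3} decouples, except through the cytokine $s(t)$. The key monotonicity we exploit is that $\beta$ is strictly decreasing, so $\alpha_i=1/\beta_i$ is increasing. Hence, by Theorem \ref{th6.1}(i), $s(\mathcal{T}^i_{\lambda})$ is continuous and strictly decreasing in $\lambda$. In each case we will compare $u_i$ with solutions of the frozen linear equations $\partial_t w=\beta(\lambda)\mathcal{T}^i_{\lambda}w$, using upper and lower bounds on $s(t)$.

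\textbf{Case 1.} Since $s(t)\ge0$, each $u_i$ is a subsolution of the linear problem with $\lambda=0$. Lemma \ref{le2.2} gives $\phi\in\mathrm{Int}(Y^+)$ with $\mathcal{T}^i_0\phi\le -\eta\phi$ for some $\eta>0$. A multiple of $e^{-\eta' t}\phi$ is then a supersolution, so $u\to0$.

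\textbf{Case 2.} Since $s(\mathcal{T}^i_{+\infty})>0$, Lemma \ref{le2.2} gives $\phi\gg0$ with $\mathcal{T}^i_{\lambda}\phi\ge\eta\phi$ for all $\lambda$. Using that $\beta$ has a positive lower bound, a multiple of $e^{\eta' t}\phi$ is a subsolution, and this forces blow-up.

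\textbf{Cases 3, 6, 7.} The components with $s(\mathcal{T}^i_{+\infty})>0$ blow up by the Case 2 argument. Consequently $s(t)\to\infty$, so $\alpha_i(s(t))\to\alpha_i(+\infty)$. For every index $i\le l$ we then have $s(\mathcal{T}^i_{\lambda})<0$ for all large $\lambda$. This holds in the Case 4 regime because $s(\mathcal{T}^i_{+\infty})<0$, and in the Case 1 regime by monotonicity. Therefore those components decay exponentially, again via a strict super-solution from Lemma \ref{le2.2}.

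\textbf{Cases 4 and 5.} Here $\theta_1$ is the unique positive solution from Proposition \ref{pr4.2}, and $s_1=\alpha_1^{-1}(\lambda_1^\ast)$. The ordering $\lambda_1^\ast>\lambda_2^\ast>\cdots$ means $s_1$ is the largest equilibrium cytokine level. We first show that $\limsup s(t)\le s_1$ and that $s(t)$ is eventually bounded below by a positive constant. Both follow from strict sub- and super-solutions $\phi$ obtained from the variational characterization in Proposition \ref{pr4.3}(ii), taken slightly away from $\lambda_1^\ast$.

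Once $s(t)$ exceeds $s_i$ up to an $\epsilon$, the components $i\ge2$ satisfy a linear inequality with $s(\mathcal{T}^i_\lambda)<0$, so they decay. The remaining scalar equation for $u_1$ is handled using the constancy of $\kappa$. Then $\mathcal{T}^1_\lambda=\mathcal{P}_1-(1+\alpha_1(\lambda)\kappa)I$, so the principal eigenfunction $\varphi_1$ does not depend on $\lambda$. Projecting onto the adjoint principal eigenfunction then reduces the problem to a scalar ODE for $s(t)$, whose solution converges to $s_1$. This gives $u_1\to\theta_1$.

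\textbf{Main obstacle.} The hardest step is this last convergence in Cases 4 and 5. In the absence of \eqref{eq4.5} we need that $\mathcal{T}^1_{\lambda}$ has a principal eigenvalue near $\lambda_1^\ast$, together with an exponential-gap estimate for the semigroup. I would obtain the principal eigenvalue from Proposition \ref{pr4.3}(ii) together with Proposition \ref{pr2.1}, using that $0>\max\{-1-\lambda\kappa\}$. The spectral gap would come from Theorem \ref{th2.1}(iv).
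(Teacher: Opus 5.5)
Your arguments for (i)--(iii) are sound and match the comparison arguments the paper defers to in \cite[Section 4]{MR4612706}, with one correction. You should compare $u_i$ with $\partial_t w=\beta(s(t))\mathcal{T}^i_{\lambda}w$ (or, after $\tau=\int_0^t\beta(s(z))\,\mathrm{d}z$, with $\partial_\tau w=\mathcal{T}^i_\lambda w$). The frozen equation $\partial_t w=\beta(\lambda)\mathcal{T}^i_\lambda w$ does not work, because $\beta(s)-\beta(\lambda)$ multiplies $\mathcal{P}_iu_i-u_i$, which has no sign. Your case arguments in fact use the $\beta(s(t))$ version, so this is only a slip.

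The genuine gap is in (iv), and it is a circularity. After the bounds $\limsup s(t)\le s_1$ and $\liminf s(t)>0$, you say the components $2\le i\le j$ decay ``once $s(t)$ exceeds $s_i$ up to an $\epsilon$''. Nothing you have proved keeps $s(t)$ eventually above $s_2+\delta$. Your two bounds allow $s(t)$ to linger near or below $s_2$, and then $u_2$ need not decay. What you need is persistence of $u_1$, namely $\liminf s(t)\ge s_1-\delta$, and your last step does not supply it. Projecting $u_1$ onto the adjoint principal eigenfunction of $\mathcal{P}_1$ gives a closed ODE for $s(t)$ only if you already know that $u_i\to0$ for $i\ge2$ and that $u_1$ is asymptotically parallel to $\varphi_1$. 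Even $\limsup s(t)\le s_1$ is not a one-shot comparison, since $s(t)$ is not monotone.

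The missing idea is to use the constancy of $\kappa$ to linearize the whole vector at once. This is the device of \cite{MR4612706}, and the paper reproduces it in the proof of Theorem \ref{th5.1}(iv).
With $\tau$ as above and $g(\tau)=\exp\int_0^\tau\bigl(1+\kappa/\beta(S(r))\bigr)\,\mathrm{d}r$, the function $Z=gU$ solves the autonomous decoupled problem $\partial_\tau Z_i=\mathcal{P}_iZ_i$.
Each $\mathcal{P}_i$ is compact with $r(\mathcal{P}_i)>0$, so it has an isolated, algebraically simple principal eigenvalue. Moreover $\lambda_i^\ast=(r(\mathcal{P}_i)-1)/\kappa$. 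Hence $\lambda_1^\ast>\lambda_i^\ast$, together with $s(\mathcal{T}^i_0)<0<s(\mathcal{T}^1_0)$ for the Case 5 indices, gives $r(\mathcal{P}_1)>r(\mathcal{P}_i)$ for every $i\ge2$.
It follows that $U/S=Z/\sum_i\int_\Omega\zeta Z_i\to(\varphi_1,0,\dots,0)/\int_\Omega\zeta\varphi_1$ whatever $S$ does, provided $u_{1,0}\not\equiv0$. This disposes of the other components without any lower bound on $s$.
Then $S'=\bigl(c(\tau)-1-\kappa/\beta(S)\bigr)S$ with $c(\tau)\to r(\mathcal{P}_1)$ is an asymptotically autonomous scalar equation. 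Its unique positive, globally attracting equilibrium is $s_1=\alpha^{-1}(\lambda_1^\ast)$, which yields $u\to\theta_1e_1$.

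Finally, the ``main obstacle'' you name is not one here. For constant $\kappa$, $\mathcal{T}^1_\lambda=\mathcal{P}_1-(1+\alpha(\lambda)\kappa)I$ is a compact operator plus a multiple of $I$, so its principal eigenvalue exists for every $\lambda$; indeed \eqref{eq4.5} holds trivially. The real difficulty in (iv) is the competition through $s(t)$.
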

Consider the steady state equation of \eqref{eq4.3} with gene mutations
\begin{equation}\label{eq4.6}
	\begin{cases}
		\begin{aligned}
			0 &= 2(1+\epsilon m_{ii})\beta(s)\int_{\Omega} p_i(x,y)e^{-\mu_i(y)}u_i(y)\, \mathrm{d}y \\
			&\quad + 2\epsilon\sum_{j\neq i} m_{ij}\beta(s)\int_{\Omega} p_j(x,y)e^{-\mu_j(y)}u_j(y)\,\mathrm{d}y \\
			&\quad - \beta(s)u_i(x) - \kappa(x)u_i(x),  
		\end{aligned} & x\in\overline{\Omega},\ 1\leq i\leq n,\\[3em]
	\displaystyle	s = \sum\limits_{i=1}^n \int_{\Omega} \zeta(x) u_i(x)\,\mathrm{d}x.
	\end{cases}
\end{equation}
Similarly, using Proposition \ref{pr4.3}(ii) and the arguments in \cite[Theorem 5.1]{MR4612706}, we can remove Condition \eqref{eq4.5} and still obtain the same results as follows.
\begin{proposition}\label{pr4.4}
	Assume that   {\rm(\textbf{A1})}--{\rm(\textbf{A3})} hold. The following conclusions hold:
	\begin{itemize}
		\item [\rm(i)] If $s( \mathcal{T}_{\epsilon,0})<0$, then $0$ is the only nonnegative solution of \eqref{eq4.6} in $X^{+}$;
		
		\item [\rm(ii)] If $s(\mathcal{T}_{\epsilon,+\infty}) < 0 < s(\mathcal{T}_{\epsilon,0})$, then \eqref{eq4.6}  has a unique positive solution in {\rm Int}$(X^{+})$;
		
		\item [\rm(iii)] If $ s(\mathcal{T}_{\epsilon,+\infty})  > 0$, then $0$ is the only nonnegative solution of \eqref{eq4.6} in $X^{+}$.
	\end{itemize}
\end{proposition}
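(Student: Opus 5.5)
The proof reduces the steady-state problem \eqref{eq4.6} to the spectral problem for $\mathcal{T}_{\epsilon,\lambda}$, specialised to $\beta_i\equiv\beta$ and $\kappa_i\equiv\kappa$. The plan follows \cite[Theorem 5.1]{MR4612706}, replacing their principal-eigenvalue input by Theorem \ref{th6.1} and Proposition \ref{pr2.1}.

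\textbf{Reduction.} Suppose $u\in X^{+}\setminus\{0\}$ solves \eqref{eq4.6} with $s=\sum_i\int_\Omega\zeta u_i$. Dividing by $\beta(s)>0$ and writing $\alpha(s)=1/\beta(s)$ gives
\[
\mathcal{T}_{\epsilon,s}u=(I+\epsilon M)\mathcal{P}u-u-\alpha(s)\kappa u=0 .
\]
Under {\rm(\textbf{A3})}, $I+\epsilon M$ is irreducible and $p_i(x,x)>0$. Hence $\mathcal{T}_{\epsilon,s}+cI$ is eventually essentially strongly positive for large $c$, and the argument of Proposition \ref{pr3.1} shows $u\gg0$.

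\textbf{Zero eigenvalue must be the spectral bound.} Apply the second characterization in Lemma \ref{le2.2} with the test function $\phi=u\in\mathrm{Int}(X^{+})$. Since $\mathcal{T}_{\epsilon,s}u=0\cdot u$, both the inf-sup and the sup-inf give $s(\mathcal{T}_{\epsilon,s})=0$. So any positive steady state forces $s(\mathcal{T}_{\epsilon,s})=0$ for its own cytokine level $s$.

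\textbf{Parts (i) and (iii).} By Theorem \ref{th6.1}(i), $\lambda\mapsto s(\mathcal{T}_{\epsilon,\lambda})$ is strictly decreasing. In (i), $s(\mathcal{T}_{\epsilon,0})<0$, so $s(\mathcal{T}_{\epsilon,\lambda})<0$ for all $\lambda\ge0$, and no positive steady state exists. In (iii), $s(\mathcal{T}_{\epsilon,+\infty})>0$ is the value at the limit $\alpha(+\infty)=\lim_{\lambda\to\infty}\alpha(\lambda)$. Monotonicity in $\alpha$ (equivalently, the comparison $\mathcal{T}_{\epsilon,\lambda}\ge\mathcal{T}_{\epsilon,+\infty}$ together with Lemma \ref{le2.2}) gives $s(\mathcal{T}_{\epsilon,\lambda})>0$ for all finite $\lambda$, again a contradiction. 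The only care needed is to read $\mathcal{T}_{\epsilon,+\infty}$ as the operator with $\alpha(+\infty)$, which is finite by {\rm(\textbf{A2})}.

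\textbf{Part (ii).} Extend continuity of $s(\mathcal{T}_{\epsilon,\lambda})$ to $\lambda=+\infty$ by the same $\delta$-argument as in Lemma \ref{le6.1}(i), using $\alpha(\lambda)\to\alpha(+\infty)$. The intermediate value theorem and strict monotonicity then give a unique $\lambda^{\ast}\in(0,\infty)$ with $s(\mathcal{T}_{\epsilon,\lambda^{\ast}})=0$.

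This is the step where condition \eqref{eq4.5} was previously used, and it is the main obstacle. The fix is that
\[
\max_x s(A(x))=\max_x\{-1-\alpha(\lambda^{\ast})\kappa(x)\}<0=s(\mathcal{T}_{\epsilon,\lambda^{\ast}}),
\]
since $\kappa>0$. So Proposition \ref{pr2.1} and Theorem \ref{th2.1} apply and yield a principal eigenfunction $\varphi\gg0$, unique up to scaling. Set $u=\tau\varphi$ with $\tau=\lambda^{\ast}/\int_\Omega\zeta\sum_i\varphi_i$. Then $\sum_i\int_\Omega\zeta u_i=\lambda^{\ast}$, and $u$ solves \eqref{eq4.6}.

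Uniqueness: any positive solution has $s(\mathcal{T}_{\epsilon,s})=0$ by the reduction step, so $s=\lambda^{\ast}$. By Theorem \ref{th2.1}(iii), $u$ is a multiple of $\varphi$, and the normalization fixes the constant.
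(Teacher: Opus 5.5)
Your proposal is correct and follows essentially the same route as the paper. The paper defers to the argument of \cite[Theorem 5.1]{MR4612706}: any nontrivial nonnegative steady state is a strongly positive zero-eigenvector of $\mathcal{T}_{\epsilon,s}$, so $s$ must be the unique zero of the strictly decreasing map $\lambda\mapsto s(\mathcal{T}_{\epsilon,\lambda})$. Condition \eqref{eq4.5} is replaced, exactly as you do, by the observation $0=s(\mathcal{T}_{\epsilon,\lambda^{\ast}})>\max_{x}\{-1-\alpha(\lambda^{\ast})\kappa(x)\}$ together with Proposition \ref{pr2.1}.

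One small citation fix: in the final uniqueness step, use the algebraic simplicity in Theorem \ref{th2.1}(ii), since Theorem \ref{th2.1}(iii) as written applies only to nonzero eigenvalues. Alternatively, note that the eigenvalue $0$ of $\mathcal{T}_{\epsilon,\lambda^{\ast}}$ is the nonzero eigenvalue $r(\mathcal{T}_{\epsilon,\lambda^{\ast}}+c_0I)$ of the shifted operator.
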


Based on Theorem \ref{th6.1}, Propositions \ref{pr4.8} and \ref{pr4.4}, and the proof in \cite[Theorem 1.4 and Corollary 1.5]{MR4612706}, we can remove Condition \eqref{eq4.5} and obtain the same conclusions  as follows.

\begin{proposition}[{\cite[Theorem  1.4]{MR4612706}}]
	Assume that  {\rm(\textbf{A1})}--{\rm(\textbf{A3})} hold. For any fixed $0 < \epsilon \leq \epsilon_0$, the following conclusions hold:
	\begin{enumerate}
		
		\item [\rm(i)] If $s( \mathcal{T}_{\epsilon,0})<0$, then $(0, 0, \cdots, 0)$ is the only nonnegative steady state of \eqref{eq4.3} and is globally asymptotically stable;
		
		\item [\rm(ii)] If $s(\mathcal{T}_{\epsilon,+\infty}) < 0 < s(\mathcal{T}_{\epsilon,0})$, then \eqref{eq4.3} has a unique positive continuous steady state $\theta_\epsilon$, moreover, all solutions of the system converge to this steady state provided that $\kappa$ is a constant function;
		
		\item [\rm(iii)] If $s(\mathcal{T}_{\epsilon,+\infty}) > 0$, then $(0, 0, \cdots, 0)$ is the only nonnegative steady state of \eqref{eq4.3}, and
		\begin{equation}\label{eq4.8}
		\lim_{t \to \infty} (u_1(x, t), u_2(x, t), \cdots, u_n(x, t)) = (\infty, \infty, \cdots, \infty) \quad \text{uniformly in } \overline{\Omega}.
		\end{equation}
	\end{enumerate}
\end{proposition}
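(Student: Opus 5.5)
The plan is to follow the scheme of \cite[Theorem 1.4]{MR4612706}, replacing the principal-eigenvalue facts used there by the spectral-bound results established above. Under (\textbf{A3}) the operator $\mathcal{T}_{\epsilon,\lambda}$ is irreducible. Since $\epsilon\le\epsilon_0$, the matrix $I+\epsilon M$ is nonnegative and irreducible. Hence, after adding a constant, $\mathcal{T}_{\epsilon,\lambda}$ is eventually essentially strongly positive, and Lemma \ref{le2.2} gives the Collatz--Wielandt characterization of $s(\mathcal{T}_{\epsilon,\lambda})$. A steady state $u$ of \eqref{eq4.3}, with $s=\sum_i\int_\Omega\zeta u_i$, satisfies $\mathcal{T}_{\epsilon,s}u=0$ after dividing by $\beta(s)$ and setting $\alpha(s)=1/\beta(s)$. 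The steady-state statements are exactly Proposition \ref{pr4.4}. By Theorem \ref{th6.1}(i), $s(\mathcal{T}_{\epsilon,\lambda})$ decreases strictly and continuously from $s(\mathcal{T}_{\epsilon,0})$ to $s(\mathcal{T}_{\epsilon,+\infty})$.

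\textbf{(i).} If $s(\mathcal{T}_{\epsilon,0})<0$, then $s(\mathcal{T}_{\epsilon,\lambda})<0$ for every $\lambda\ge0$. Since $\beta$ is decreasing, $\beta(s(t))\le\beta(0)$. The solution is therefore dominated by the solution of the linear cooperative system $\partial_t w=\beta(0)\bigl[(I+\epsilon M)\mathcal{P}w-w\bigr]-\kappa w$. This can be written as $\beta(0)\mathcal{T}_{\epsilon,0}w$ up to the identification of $\kappa$ with $\kappa_i$. Lemma \ref{le2.2} provides $\phi\in\mathrm{Int}(X^+)$ with $\mathcal{T}_{\epsilon,0}\phi\le -\delta\phi$. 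Then $Ce^{-\delta\beta(0) t}\phi$ is a supersolution, and comparison gives $u\to0$ uniformly. The strongly positive test function replaces the principal eigenfunction used in \cite{MR4612706}.

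\textbf{(iii).} If $s(\mathcal{T}_{\epsilon,+\infty})>0$, Lemma \ref{le2.2} gives $\varphi\gg0$ with $\mathcal{T}_{\epsilon,+\infty}\varphi\ge\delta\varphi$. By monotonicity in $\lambda$, $\mathcal{T}_{\epsilon,\lambda}\varphi\ge\delta\varphi$ for all $\lambda$. The bounds $\beta\ge\underline\beta>0$ and $\beta'$ bounded from (\textbf{A2}) allow the rescaling argument of \cite{MR4612706}. One first shows that $u(\cdot,t)\gg0$ for $t>0$, using irreducibility and $p_i(x,x)>0$. Then $\eta e^{\delta\underline\beta t}\varphi$ is a subsolution for small $\eta$, which yields \eqref{eq4.8} uniformly.

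\textbf{(ii).} Existence and uniqueness of $\theta_\epsilon$ come from Proposition \ref{pr4.4}(ii), through the unique $\lambda^\ast$ with $s(\mathcal{T}_{\epsilon,\lambda^\ast})=0$. At $\lambda^\ast$ we have $0>\max_x s(\text{multiplication part})$, so Proposition \ref{pr2.1} and Theorem \ref{th2.1} give a strongly positive principal eigenfunction. For convergence with $\kappa$ constant, I would reproduce the argument of \cite{MR4612706}. Because $\kappa$ is constant, the substitution $u=e^{\int(\cdot)}v$ decouples the nonlinearity into a scalar ODE for the projection onto the adjoint principal eigenfunction. The linear part is then controlled by Theorem \ref{th2.1}(iv), which gives a spectral gap.

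The main obstacle is this last step. Convergence needs a spectral gap for $\mathcal{T}_{\epsilon,\lambda^\ast}$. This gap is available only because $s>\max_x s(\text{local part})$, which we verified at $\lambda^\ast$. I would check that the argument in \cite{MR4612706} invokes eigenfunctions only at $\lambda^\ast$, and spectral bounds elsewhere.
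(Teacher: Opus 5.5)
Your overall route is the paper's. The paper gives no self-contained proof here: it defers to the argument of \cite[Theorem 1.4]{MR4612706}, with Theorem~\ref{th6.1} and Proposition~\ref{pr4.4} replacing the principal-eigenvalue facts. Your use of Lemma~\ref{le2.2} test functions in (i) and (iii), and of Proposition~\ref{pr2.1} and Theorem~\ref{th2.1} at $\lambda^\ast$ in (ii), fleshes that out faithfully. Your treatment of (ii) matches what the paper writes out in Theorem~\ref{th5.1}(iv). With $\kappa$ constant, $\mathcal{T}_{\epsilon,\lambda}$ differs from $(I+\epsilon M)\mathcal{P}-I$ only by a scalar, so the problem reduces to an autonomous linear flow with a spectral gap plus a scalar ODE. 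You use the adjoint projection as the scalar variable, where the paper uses $S=\int_\Omega\zeta\sum_i U_i$; both work.

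One step in (i) is false as written. From $\beta(s(t))\le\beta(0)$ you cannot conclude that $u$ is dominated by the solution of $w_t=\beta(0)\bigl[(I+\epsilon M)\mathcal{P}w-w\bigr]-\kappa w$. The factor $\beta$ multiplies the bracket $(I+\epsilon M)\mathcal{P}u-u$, which has no sign, so increasing $\beta$ need not increase the right-hand side. If $\mathcal{P}$ is small, this is consistent with $s(\mathcal{T}_{\epsilon,0})<0$. The bracket is then negative, $w$ decays faster than $u$, and $u\le w$ fails. For the same reason, $Ce^{-\delta\beta(0)t}\phi$ is not in general a supersolution of \eqref{eq4.3}.

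The repair is immediate, and there are two ways to do it.
\begin{enumerate}
\item Rescale time by $\tau=\int_0^t\beta(s(z))\,\mathrm{d}z$. Afterwards only the term $-\frac{\kappa}{\beta(S)}U$ carries $\beta$, and that term does have a sign, so $U_\tau\le\mathcal{T}_{\epsilon,0}U$. This is what the paper does in Theorem~\ref{th5.1}(i).
\item Check the supersolution directly. With $\underline{\beta}:=\inf\beta>0$ and $\mathcal{T}_{\epsilon,0}\phi\le-\delta\phi$,
\[
\beta(s)\bigl[(I+\epsilon M)\mathcal{P}\phi-\phi\bigr]-\kappa\phi
\le\Bigl(\kappa\Bigl(\frac{\beta(s)}{\beta(0)}-1\Bigr)-\delta\beta(s)\Bigr)\phi
\le-\delta\underline{\beta}\,\phi ,
\]
so $Ce^{-\delta\underline{\beta}t}\phi$ is a supersolution. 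This mirrors your correct choice $\eta e^{\delta\underline{\beta}t}\varphi$ in (iii).
\end{enumerate}
Taking $C=\|u_0\|/\min_{i,x}\phi_i(x)$ in the second option gives $\|u(t)\|\le \frac{\max\phi}{\min\phi}\|u_0\|e^{-\delta\underline{\beta}t}$. This also supplies the Lyapunov-stability half of ``globally asymptotically stable'', which your sketch did not address.
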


\begin{proposition}[{\cite[Corollary 1.5]{MR4612706}}]
	Assume that {\rm(\textbf{A1})}--{\rm(\textbf{A3})} hold. Then the following conclusions hold:
	\begin{enumerate}
		
		\item [\rm(i)] If $\gamma(0)<0$, then $(0, 0, \cdots, 0)$ is the only nonnegative steady state of \eqref{eq4.3} and is globally asymptotically stable for any $0 < \epsilon \ll 1$;

		\item[\rm(ii)] If $\gamma(+\infty) < 0 < \gamma(0)$, then for any $0 < \epsilon \ll 1$, \eqref{eq4.3} has a unique positive continuous steady state $\theta_\epsilon$, and in the case of $\kappa$ being constant, all solutions of the system converge to this steady state. If in addition $s^{\ast}_{1} > s^{\ast}_{2} > \cdots > s^{\ast}_{n}$, then
		\[
		\lim_{\epsilon \to 0^+} \theta_\epsilon = (\theta_1, 0, \dots, 0) \quad \text{uniformly in } \overline{\Omega},
		\]
		where $ s^{\ast}_{i}$ satisfies $s(\mathcal{T}^{i}_{ s^{\ast}_{i}})=0 $;
		
		\item [\rm(iii)] If $\gamma(+\infty) > 0$, then $(0, 0, \dots, 0)$ is the only nonnegative steady state of \eqref{eq4.3} for any $0 < \epsilon \ll 1$, and \eqref{eq4.8} holds.
	\end{enumerate}
\end{proposition}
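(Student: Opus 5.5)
The plan is to follow the argument of \cite[Corollary 1.5]{MR4612706}, replacing its spectral input by Theorem \ref{th6.1} and Proposition \ref{pr4.4}. The basic device is to turn conditions on $\gamma$ into conditions on $s(\mathcal{T}_{\epsilon,\lambda})$ for all small $\epsilon$, using Theorem \ref{th6.1}(ii): $s(\mathcal{T}_{\epsilon,\lambda})\to\gamma(\lambda)$ as $\epsilon\to0^+$ for each fixed $\lambda$. The endpoint values $\lambda=0$ and $\lambda=+\infty$ are understood through $\alpha_i(0)$ and $\alpha_i(+\infty)=\lim_{\lambda\to\infty}1/\beta_i(\lambda)$. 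These limits exist and are finite because $\beta_i$ is decreasing with a positive lower bound. Hence $\mathcal{T}_{\epsilon,+\infty}$ is again an operator of the same type, and Theorem \ref{th6.1}(ii) applies to it verbatim.

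\textbf{Case (i).} If $\gamma(0)<0$, then $s(\mathcal{T}_{\epsilon,0})<0$ for all $0<\epsilon\ll1$. Proposition \ref{pr4.4}(i) and the preceding proposition (\cite[Theorem 1.4]{MR4612706}), part (i), then give that $0$ is the unique nonnegative steady state and that it is globally asymptotically stable.

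\textbf{Case (iii).} If $\gamma(+\infty)>0$, then $s(\mathcal{T}_{\epsilon,+\infty})>0$ for small $\epsilon$, and part (iii) of the same proposition gives the claim together with \eqref{eq4.8}.

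\textbf{Case (ii), existence and convergence.} If $\gamma(+\infty)<0<\gamma(0)$, both strict inequalities persist for small $\epsilon$. This yields the unique positive steady state $\theta_\epsilon$, and the convergence of all solutions when $\kappa$ is constant.

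\textbf{Case (ii), the limit $\theta_\epsilon\to(\theta_1,0,\dots,0)$.} This is the main obstacle. First, $s_\epsilon:=\sum_i\int_\Omega\zeta\,\theta_{\epsilon,i}$ is characterized by $s(\mathcal{T}_{\epsilon,s_\epsilon})=0$, so by strict monotonicity (Theorem \ref{th6.1}(i)) $s_\epsilon$ is uniquely determined. Second, I would show $s_\epsilon\to s_1^\ast=\max_i s_i^\ast$. Since $\gamma$ is a maximum of continuous, strictly decreasing functions, $\gamma(\lambda)$ is positive for $\lambda<s_1^\ast$ and negative for $\lambda>s_1^\ast$. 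Pointwise convergence combined with monotonicity in $\lambda$ then traps $s_\epsilon$ in any neighbourhood of $s_1^\ast$.

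Third, I would show the components converge. Normalize $\theta_\epsilon$ and use the equation: the $i$-th equation reads $(1+\alpha_i(s_\epsilon)\kappa_i)\theta_{\epsilon,i}=$ a compact integral term plus $O(\epsilon)$. Since $1+\alpha_i\kappa_i>0$, this gives equicontinuity, so a subsequence converges uniformly. The limit solves the decoupled problem at $\lambda=s_1^\ast$. For $i\ge2$ we have $s(\mathcal{T}^i_{s_1^\ast})<0$ because $s_i^\ast<s_1^\ast$, which forces the $i$-th component to vanish. The first component is then the principal eigenfunction $\varphi_1$, normalized so that $\int\zeta\theta_1=s_1^\ast$; existence of $\varphi_1$ is guaranteed by Proposition \ref{pr4.3}(ii). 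Uniqueness of $\theta_1$ (Proposition \ref{pr4.2}) upgrades subsequential convergence to convergence of the whole family.

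The delicate point is ruling out that the normalized limit is zero or loses mass. I would handle it using the strict gap $s(\mathcal{T}^i_{s_1^\ast})<0$ for $i\ge2$, which provides a resolvent bound, together with the fact that $\sup_{x,\lambda}\max_i\{-1-\alpha_i(\lambda)\kappa_i(x)\}<0$. The latter bounds the essential part uniformly, so no concentration can occur.
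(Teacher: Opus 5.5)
Your proposal is correct. For (i), (iii) and the existence/convergence half of (ii) it follows the paper's route: you convert conditions on $\gamma$ into conditions on $s(\mathcal{T}_{\epsilon,0})$ and $s(\mathcal{T}_{\epsilon,+\infty})$ via Theorem \ref{th6.1}(ii), then apply Proposition \ref{pr4.4} and the preceding proposition.

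For the limiting profile you argue differently. The paper keeps the proof of \cite{MR4612706} and repairs only its dependence on condition \eqref{eq4.5}. Using monotonicity in $\lambda$ and the pointwise limit at $\lambda=s_1^\ast+\delta_1$, it shows $s(\mathcal{T}_{\epsilon,\lambda})\ge-\frac12>-1\ge\max_x\{-1-\alpha\kappa\}$ on $(0,\delta)\times(s_1^\ast-\delta,s_1^\ast+\delta)$. Proposition \ref{pr2.1} then supplies principal eigenpairs there, and \cite[Theorem 2.7]{MR4612706} applies unchanged.

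You never use principal eigenpairs of $\mathcal{T}_{\epsilon,\lambda}$ for $\epsilon>0$. Instead you:
\begin{itemize}
\item treat $\theta_\epsilon$ directly as a zero-eigenvector of $\mathcal{T}_{\epsilon,s_\epsilon}$;
\item trap $s_\epsilon$ near $s_1^\ast$ with the same monotonicity-plus-pointwise-limit trick;
\item get compactness by dividing the equation by $1+\alpha(s_\epsilon)\kappa\ge1$;
\item identify the limit through $s(\mathcal{T}^i_{s_1^\ast})<0$ for $i\ge2$ and uniqueness of $\theta_1$.
\end{itemize}
Both arguments rest on the same fact, $h_i<-1$. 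Your version is self-contained and avoids the external eigenfunction-convergence theorem. The paper's version is shorter, and its local principal-eigenvalue statement makes every step of \cite{MR4612706} near $(0,s_1^\ast)$ available.

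The ``delicate point'' you flag is not actually delicate if you normalize in the sup norm. The uniform limit $\hat\theta$ then has norm one. Its components $i\ge2$ vanish because $0\notin\sigma(\mathcal{T}^i_{s_1^\ast})$. Hence $\hat\theta_1$ is a positive multiple of the strongly positive principal eigenfunction $\varphi_1$ (Theorem \ref{th2.1}), so $\int_\Omega\zeta\hat\theta_1>0$. It follows that $\|\theta_\epsilon\|=s_\epsilon/\sum_i\int_\Omega\zeta\hat\theta_{\epsilon,i}$ converges. No resolvent bound or anti-concentration argument is needed; the bound $h_i<-1$ enters only through the division that yields equicontinuity.
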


Note that the proof of the limiting profile of $\theta_\epsilon$ in \cite[Corollary 1.5]{MR4612706} depends on \cite[Theorem 2.7]{MR4612706}, which requires Condition \eqref{eq4.5} to guarantee that $\mathcal{T}_{\epsilon,\lambda}$ admits a principal eigenvalue for all $\epsilon,\lambda>0$. However, for the proof we only need $\mathcal{T}_{\epsilon,\lambda}$ to admit a principal eigenvalue for $(\epsilon,\lambda)\in(0,\delta)\times(s_{1}^\ast-\delta,s_{1}^\ast+\delta)$ for some $\delta>0 $.

In fact, there exist $\delta_1,\delta_2>0$ such that  $\gamma(\lambda)>-\frac{1}{4}$ for $\lambda\in (s_{1}^\ast-\delta_{1},s_{1}^\ast+\delta_{1})$, and for all $\epsilon \in \left( 0, \delta_{2}\right) $ and $\lambda\in (s_1^\ast-\delta_{1},s_1^\ast+\delta_{1})$,
\[
s(\mathcal{T}_{\epsilon,\lambda})\ge s(\mathcal{T}_{\epsilon,s_{1}^{\ast}+\delta_{1}})\ge \gamma (s_{1}^{\ast}+\delta_{1})  - \frac{1}{4}\ge  -\frac{1}{2},
\] 
where the first inequality follows from the monotonicity of $s(\mathcal{T}_{\epsilon,\cdot}) $.

Notice that $\max\limits_{x \in \overline{\Omega}} \left\lbrace -1 - \alpha(s^{\ast}_{i}) \kappa_{i}(x)\right\rbrace \le -1$. Set $\delta= \min\left\lbrace \delta_{1}, \delta_{2} \right\rbrace $. Then, by Proposition \ref{pr2.1} and Theorem \ref{th6.1}, we obtain that $\mathcal{T}_{\epsilon,\lambda}$ admits a principal eigenvalue for $(\epsilon,\lambda)\in(0,\delta)\times(s_{1}^{\ast}-\delta,s_{1}^{\ast}+\delta)$. Consequently, the limiting profile of $\theta_{\epsilon}$ can be established by the same argument as in \cite{MR4612706}.

\subsection{The reducible case}
In this subsection, we  study the dynamics of system \eqref{eq4.3} when $M$ is reducible, focusing on the simple case  $n=2$. According to  Proposition \ref{pr3.1}, there exists a permutation matrix such that $M$ is a lower block-triangular matrix.  Therefore, without loss of generality, we may assume
\[
M= \left( \begin{array}{ll}
	m_{11} & 0\\
	m_{21} & 0
\end{array}\right),
\]
with $-m_{11}=m_{21}>0$ and $-m_{22}=m_{12}=0$. These equalities enforce probability conservation, which  guarantees that every daughter cell inherits a genetic type with certainty. In this case, we rewrite \eqref{eq4.3} as follows:
\begin{equation}\label{eq4.12}
	\begin{cases}
		\begin{aligned}
			\frac{\partial u_1(x,t)}{\partial t} = & 2(1+\epsilon m_{11})\beta(s(t))\int_{\Omega} p_1(x,y)e^{-\mu_1(y)}u_1(y,t)\, \mathrm{d}y \\
			&  - \beta(s(t))u_1(x,t) - \kappa(x)u_1(x,t), 
		\end{aligned} &  (x,t)\in\overline{\Omega}\times(0,\infty), \\[2em]
		\begin{aligned}
			\frac{\partial u_2(x,t)}{\partial t} = & 2 \beta(s)\int_{\Omega} p_2(x,y)e^{-\mu_2(y)}u_2(y)\, \mathrm{d}y - \beta(s)u_2(x) \\
			& - \kappa(x)u_2(x)+ 2\epsilon m_{21}\beta(s)\int_{\Omega} p_1(x,y)e^{-\mu_1(y)}u_1(y)\,\mathrm{d}y ,  
		\end{aligned} & (x,t)\in\overline{\Omega}\times(0,\infty),\\
		\displaystyle	s(t) = \sum\limits_{i=1}^n \int_{\Omega} \zeta(x) u_i(x,t) \, \mathrm{d}x, & t\in[0,\infty),\\[1em]
		\left(u_1(x,0),u_2(x,0) \right)  = \left( u_{1,0}(x),  u_{2,0}(x)\right) , & x\in\overline{\Omega}.
	\end{cases}
\end{equation}
Recall that    $\hat{\mathcal{T}}_{\epsilon,\lambda}^{i} : C(\overline{\Omega}) \to C(\overline{\Omega})$:
\[
\left[ \hat{\mathcal{T}}_{\epsilon,\lambda}^{i} \phi\right](x)  = \left( 1+ \epsilon m_{ii}\right) \left[ \mathcal{P}_i \phi \right](x) - (1+\frac{\kappa(x)}{\beta(\lambda)}) \phi(x), \quad x\in \overline{\Omega}, \ i =1,2.
\] 
To simplicity,  we replace $\frac{1}{\beta}$ by $\lambda$ in $\hat{\mathcal{T}}_{\epsilon,\lambda}^{i}$ and denote the resulting operator  by  $\tilde{\mathcal{T}}_{\epsilon,\lambda}^{i}$.
The following cases are considered:
\begin{itemize}
	\item[\textbf{Case I}] $ s(\hat{\mathcal{T}}_{\epsilon,0}^{i}) < 0$ for all $1 \le i \le 2$;
	
	\item[\textbf{Case II}] $s(\hat{\mathcal{T}}_{\epsilon,0}^{1}) < 0$  and  $s(\hat{\mathcal{T}}_{\epsilon,+\infty}^{2}) < 0 < s(\hat{\mathcal{T}}_{\epsilon,0}^{2})$;
	
	\item[\textbf{Case III}]  $s(\hat{\mathcal{T}}_{\epsilon,0}^{1}) < 0$  and  $s(\hat{\mathcal{T}}_{\epsilon,+\infty}^{2}) > 0$;
	
	\item[\textbf{Case IV}] $s(\hat{\mathcal{T}}_{\epsilon,+\infty}^{1}) > 0$ and $s(\hat{\mathcal{T}}_{\epsilon,0}^{2}) < 0$;
	
	\item[\textbf{Case V}] $s(\hat{\mathcal{T}}_{\epsilon,+\infty}^{1}) > 0$ and $s(\hat{\mathcal{T}}_{\epsilon,+\infty}^{2}) < 0 < s(\hat{\mathcal{T}}_{\epsilon,0}^{2})$;
	
	\item[\textbf{Case VI}] $s(\hat{\mathcal{T}}_{\epsilon,+\infty}^{i}) > 0$ for all $1 \le i \le 2$;
	
	\item[\textbf{Case VII}] $s(\hat{\mathcal{T}}_{\epsilon,+\infty}^{1}) < 0 < s(\hat{\mathcal{T}}_{\epsilon,0}^{1})$  and $s(\hat{\mathcal{T}}_{\epsilon,0}^{2}) < 0$;
	
	\item[\textbf{Case VIII}] $s(\hat{\mathcal{T}}_{\epsilon,+\infty}^{i}) < 0 < s(\hat{\mathcal{T}}_{\epsilon,0}^{i})$ for all $1 \le i \le 2$;
	
	\item[\textbf{Case IX}] $s(\hat{\mathcal{T}}_{\epsilon,+\infty}^{1}) < 0 < s(\hat{\mathcal{T}}_{\epsilon,0}^{1})$  and $s(\hat{\mathcal{T}}_{\epsilon,+\infty}^{2}) > 0$.

\end{itemize}

Here we cannot rearrange  the system components  again to simplify the above cases, because assuming that $M$ is lower triangular already implies one such reordering.
Consider the steady state equation of \eqref{eq4.12}
\begin{equation}\label{eq4.9}
	\begin{cases}
	\displaystyle	0 = 2(1+\epsilon m_{11})\beta(s)\int_{\Omega} p_1(x,y)e^{-\mu_1(y)}u_1(y)\, \mathrm{d}y - \beta(s)u_1(x) - \kappa(x)u_1(x),  
		& x\in\overline{\Omega},\\[1em]
		\begin{aligned}
			0 &= 2 \beta(s)\int_{\Omega} p_2(x,y)e^{-\mu_2(y)}u_2(y)\, \mathrm{d}y - \beta(s)u_2(x) - \kappa(x)u_2(x)\\
			&\quad + 2\epsilon m_{21}\beta(s)\int_{\Omega} p_1(x,y)e^{-\mu_1(y)}u_1(y)\,\mathrm{d}y ,  
		\end{aligned} & x\in\overline{\Omega},\\[2em]
		\displaystyle	
		s =  \int_{\Omega} \zeta(x)\left(  u_1(x) + u_2(x)\right) \,\mathrm{d}x.
	\end{cases}
\end{equation}

\begin{lemma}
	Assume that  {\rm(\textbf{A1})} and {\rm(\textbf{A2})} hold. 
	Then the following statements hold for \eqref{eq4.9}:
	\begin{itemize}
		\item [\rm(i)] In {\rm\textbf{Case I}},  {\rm\textbf{Case III}},  {\rm\textbf{Case IV}},  {\rm\textbf{Case VI}}, or {\rm\textbf{Case IX}}, $(0,0)$ is the only nonnegative solution  in $X^{+}$; 
		
		\item [\rm(ii)] In {\rm\textbf{Case II}} or {\rm\textbf{Case V}}, \eqref{eq4.9} has a unique solution $(0,\phi_{2})$ in $X^{+}\setminus\left\lbrace 0\right\rbrace $;
		
		\item [\rm(iii)] In {\rm\textbf{Case VII}}, \eqref{eq4.9} has a unique positive  solution $(\phi_{1},\phi_{2})$ in $X^{+}\setminus\left\lbrace 0\right\rbrace $, moreover, $(\phi_{1},\phi_{2})\in {\rm Int}(X^{+})$;
		
		\item[\rm(iv)] In {\rm\textbf{Case VIII}}, we have: 
		\begin{itemize}
			\item[\rm(a)] If $\lambda_{\epsilon,1}^{\ast}< \lambda_{\epsilon,2}^{\ast}$, then \eqref{eq4.9} has a unique solution $(0,\phi_{2})$ in $X^{+}\setminus\left\lbrace 0\right\rbrace $;
			
			\item[\rm(b)] If $\lambda_{\epsilon,1}^{\ast}> \lambda_{\epsilon,2}^{\ast}$, then \eqref{eq4.9} has  exactly two solutions $(0,\phi_{2})$ and $(\phi_{1},\phi_{2})$ in $X^{+}\setminus\left\lbrace 0\right\rbrace $;
		\end{itemize}
		where $ \lambda_{\epsilon,i}^{\ast}$ is the unique solution of $s(\tilde{\mathcal{T}}_{\epsilon,\lambda}^{i})=0$ for $i=1,2$. 
	\end{itemize}

\end{lemma}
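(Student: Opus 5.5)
We argue from the triangular structure of \eqref{eq4.9}: its first equation involves only $u_1$ and $s$. Hence every nonnegative solution falls into one of two types, $u_1\equiv0$ or $u_1>0$. In both, $u_2$ is determined by the second equation once $s$ is known. Throughout we use the reparametrization $\lambda=1/\beta(s)$. Since $\beta$ is strictly decreasing with a positive lower bound, $s\mapsto \lambda$ is a strictly increasing bijection of $[0,\infty)$ onto $[1/\beta(0),1/\beta(+\infty))$. Dividing by $\beta(s)$, the first equation reads $\tilde{\mathcal{T}}^{1}_{\epsilon,\lambda}u_1=0$ and the second reads
\[
\tilde{\mathcal{T}}^{2}_{\epsilon,\lambda}u_2+2\epsilon m_{21}\mathcal{P}_1u_1=0 .
\]
By Theorem \ref{th6.1}(i) and Proposition \ref{pr4.3}, $\lambda\mapsto s(\tilde{\mathcal{T}}^{i}_{\epsilon,\lambda})$ is continuous and strictly decreasing. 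The case conditions say exactly whether $\lambda^{\ast}_{\epsilon,i}$ exists in the admissible range:
\begin{itemize}
\item it lies below the range if $s(\hat{\mathcal{T}}^{i}_{\epsilon,0})<0$;
\item it lies above the range if $s(\hat{\mathcal{T}}^{i}_{\epsilon,+\infty})>0$;
\item it lies inside the range otherwise.
\end{itemize}

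\textbf{Solutions with $u_1\equiv0$.} The problem reduces to the scalar problem \eqref{eq4.4} for $u_2$. By Proposition \ref{pr4.2} (whose proof rests on Proposition \ref{pr4.3}(ii), which supplies a principal eigenvalue at $\lambda^{\ast}_{\epsilon,2}$ because $s=0>\max(-1-\lambda\kappa)$), a nontrivial solution $(0,\phi_2)$ exists and is unique precisely when $\lambda^{\ast}_{\epsilon,2}$ is in range. That is the situation in Cases II, V, VIII. In that situation $\phi_2=s_2\varphi_2/\int\zeta\varphi_2$, where $\varphi_2$ is the principal eigenfunction of $\tilde{\mathcal{T}}^{2}_{\epsilon,\lambda^{\ast}_{\epsilon,2}}$ and $s_2=\beta^{-1}(1/\lambda^{\ast}_{\epsilon,2})$.

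\textbf{Solutions with $u_1>0$.} Then $u_1$ is a positive eigenvector of $\tilde{\mathcal{T}}^{1}_{\epsilon,\lambda}$ with eigenvalue $0$. By Lemma \ref{le2.1}(iii) and Theorem \ref{th2.1} (scalar operators are eventually essentially strongly positive), $0$ must equal $s(\tilde{\mathcal{T}}^{1}_{\epsilon,\lambda})$ and $u_1$ must be a multiple of the principal eigenfunction $\varphi_1$. Therefore $\lambda=\lambda^{\ast}_{\epsilon,1}$, which must be in range. This rules out Cases I, III, IV, VI, IX, together with the previous step, and proves (i).

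For the remaining cases, set $\lambda=\lambda^{\ast}_{\epsilon,1}$ and $u_1=c\varphi_1$ with $c>0$. By strict monotonicity, $s(\tilde{\mathcal{T}}^{2}_{\epsilon,\lambda})<0$ exactly when $\lambda>\lambda^{\ast}_{\epsilon,2}$, or when $\lambda^{\ast}_{\epsilon,2}$ lies below the range as in Case VII. In that situation $-\tilde{\mathcal{T}}^{2}_{\epsilon,\lambda}$ has a positive inverse, strongly positive on nonzero data, obtained from the resolvent of the positive operator $\tilde{\mathcal{T}}^{2}_{\epsilon,\lambda}+cI$. Hence
\[
u_2=c\,2\epsilon m_{21}\bigl(-\tilde{\mathcal{T}}^{2}_{\epsilon,\lambda}\bigr)^{-1}\mathcal{P}_1\varphi_1\gg0 .
\]
The constant $c$ is then fixed uniquely by the linear constraint
\[
\beta^{-1}(1/\lambda)=c\int\zeta\Bigl(\varphi_1+2\epsilon m_{21}\bigl(-\tilde{\mathcal{T}}^{2}_{\epsilon,\lambda}\bigr)^{-1}\mathcal{P}_1\varphi_1\Bigr),
\]
whose right-hand side is positive and linear in $c$. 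This gives a unique strongly positive solution, which proves (iii) and the second solution in (iv)(b).

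If instead $s(\tilde{\mathcal{T}}^{2}_{\epsilon,\lambda})\ge0$, as in Case VIII(a) where $\lambda^{\ast}_{\epsilon,1}<\lambda^{\ast}_{\epsilon,2}$, no solution with $u_1>0$ exists. To see this, suppose $u_2\ge0$ solves $\tilde{\mathcal{T}}^{2}_{\epsilon,\lambda}u_2=-f$ with $f=2\epsilon m_{21}\mathcal{P}_1u_1\gg0$. Then $u_2\gg0$, so $u_2$ is a strongly positive strict subsolution, $\tilde{\mathcal{T}}^{2}_{\epsilon,\lambda}u_2\le-\eta u_2$ for some $\eta>0$. Lemma \ref{le2.2} then gives $s(\tilde{\mathcal{T}}^{2}_{\epsilon,\lambda})\le-\eta<0$, a contradiction. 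Combining the two types of solutions yields (ii) and (iv).

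\textbf{Main obstacle.} The hardest step is the non-existence argument in the borderline regime $s(\tilde{\mathcal{T}}^{2}_{\epsilon,\lambda})\ge0$ when $\tilde{\mathcal{T}}^{2}_{\epsilon,\lambda}$ may lack a principal eigenvalue. Positivity of $u_2$ must be obtained directly from the equation rather than from an eigenfunction. Since $f\gg0$, at a point where $u_2$ vanishes the equation would give $0\ge f>0$, which is impossible, so $u_2\gg0$. Lemma \ref{le2.2} then applies without needing a principal eigenvalue. A second point to check is that $\lambda^{\ast}_{\epsilon,1}=\lambda^{\ast}_{\epsilon,2}$ is excluded in (iv), so the dichotomy above covers every case.
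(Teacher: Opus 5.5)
Your decomposition is the paper's. You split nonnegative solutions into $u_1\equiv0$, which reduces to the scalar problem handled by Proposition~\ref{pr4.2}, and $u_1>0$. In the second case you force $1/\beta(s)=\lambda^{\ast}_{\epsilon,1}$, then invert the second block when $s(\tilde{\mathcal{T}}^{2}_{\epsilon,\lambda^{\ast}_{\epsilon,1}})<0$, or rule the case out when this is $\ge0$. You build the Case VII solution block by block, whereas the paper takes it from the principal eigenvalue of the full $\mathcal{T}_{\epsilon,s}$ via Proposition~\ref{pr3.2}; the difference is only cosmetic. Your strict-subsolution argument with Lemma~\ref{le2.2} usefully justifies the paper's terse step ``$s(\tilde{\mathcal{T}}^{2}_{\epsilon,\lambda^{\ast}_{\epsilon,1}})\le0$''.

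One step is wrong as written. You claim that ``$\lambda=\lambda^{\ast}_{\epsilon,1}$ must be in range'' rules out Case IX. But in Case IX one has $s(\hat{\mathcal{T}}^{1}_{\epsilon,+\infty})<0<s(\hat{\mathcal{T}}^{1}_{\epsilon,0})$, so $\lambda^{\ast}_{\epsilon,1}$ \emph{is} in range. That argument excludes $u_1>0$ precisely in Cases I--VI, which is also what you silently need for Cases II and V in (ii).

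Case IX has to go through your later non-existence argument instead. Since $s(\hat{\mathcal{T}}^{2}_{\epsilon,+\infty})>0$, monotonicity gives $s(\tilde{\mathcal{T}}^{2}_{\epsilon,\lambda})>0$ for every admissible $\lambda$. Then the strongly positive strict subsolution $u_2$ yields a contradiction. This is exactly how the paper treats IX (``as in Case VIII(a)''), so your list of excluded cases and your list of ``remaining cases'' need to be corrected accordingly.

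A minor point: before applying Theorem~\ref{th2.1}(iii) to $u_1$, say why its hypothesis holds. Since $0\in\sigma(\tilde{\mathcal{T}}^{1}_{\epsilon,\lambda})$, you get $s(\tilde{\mathcal{T}}^{1}_{\epsilon,\lambda})\ge0>\max_{x}(-1-\lambda\kappa(x))$, which is what that theorem needs.
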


\begin{proof}
	
Let $(\phi_{1},\phi_{2})$ be the nonnegative solution for  \eqref{eq4.9}.
According to Proposition \ref{pr4.2}, in \textbf{Cases I}--\textbf{VI}, we obtain that $\phi_{1}\equiv0$. Applying Proposition \ref{pr4.2} again yields the conclusions for these cases.

In \textbf{Case VII}, by Propositions \ref{pr2.1}, \ref{pr3.2} and \ref{pr4.3}, and Theorem \ref{th6.1},  there exists a unique $s$ such that $s(\mathcal{T}_{\epsilon,s})=s(\hat{\mathcal{T}}_{\epsilon,s}^{1}) =0$ and $\mathcal{T}_{\epsilon,s}$ admits a principal eigenvalue with an associated eigenfunction $(\varphi_1, \varphi_2)\in X^{+}\setminus\left\lbrace 0\right\rbrace$.  If $\varphi_{1}\equiv0$, then  $\varphi_{2}\equiv0 $ by Proposition \ref{pr4.2}, a contradiction. Hence, $\varphi_{1}\not\equiv0$ and  it is easy to show $\varphi_{1}(x)>0 $ for all $x\in \overline\Omega$. Notice that 
\begin{equation}\label{eq4.10}
\begin{aligned}
	0 &= 2 \beta(s)\int_{\Omega} p_2(x,y)e^{-\mu_2(y)}\varphi_2(y)\, \mathrm{d}y - \beta(s)\varphi_2(x) - \kappa(x)\varphi_2(x)\\
	&\quad + 2\epsilon m_{21}\beta(s)\int_{\Omega} p_1(x,y)e^{-\mu_1(y)}\varphi_1(y)\,\mathrm{d}y.
\end{aligned} 
\end{equation}
It follows that $\varphi_{2}(x)>0$ for all  $x\in \overline\Omega$.  Moreover, $\varphi_{1} $ is a principal eigenfunction  of $\hat{\mathcal{T}}_{\epsilon,s}^{1}$, and $\varphi_{2}= -\left( \hat{\mathcal{T}}_{\epsilon,s}^{2}\right)^{-1}\psi $ with $\psi(x)=2\epsilon m_{21}\int_{\Omega} p_1(x,y)e^{-\mu_1(y)}\varphi_1(y)\,\mathrm{d}y$, which implies $(\varphi_1, \varphi_2)$ is unique up to a scalar multiple. Clearly, 
\[
\phi_{i}= \dfrac{s\varphi_{i}}{\int_{\Omega}\zeta(x)\left( \varphi_{1}(x)+\varphi_{2}(x)\right)\, \mathrm{d}x }, \quad i =1,2.
\]

In \textbf{Case VIII}(a),  we first show that $\phi_{1}\equiv0$. Suppose otherwise. Then $\phi_{1}(x)>0 $ for all $x\in \overline{\Omega}$ and $s(\hat{\mathcal{T}}_{\epsilon,s}^{1})=0$. This follows that  $s= \beta^{-1}(\frac{1}{\lambda_{\epsilon,1}^{\ast}})$ and  \eqref{eq4.10} holds for such $s$. Then $s(\tilde{\mathcal{T}}_{\epsilon,\lambda_{\epsilon,1}^{\ast}}^{2})\le 0$, which implies $s(\tilde{\mathcal{T}}_{\epsilon,\lambda_{\epsilon,2}^{\ast}}^{2})< 0$, a contradiction. By  Proposition \ref{pr4.2}, there exists a unique solution  $(0, \phi_{2})$.

In \textbf{Case VIII}(b), set $s= \beta^{-1}(\frac{1}{\lambda_{\epsilon,1}^{\ast}})$. As in \textbf{Case VII}, we obtain that \eqref{eq4.9} has a positive solution $(\phi_{1},\phi_{2})$.   By  Proposition \ref{pr4.2}, we also know the existence of the solution $(0, \phi_{2})$.

In \textbf{Case IX},   if $\phi_{1}\equiv0$, then Proposition \ref{pr4.2} yields $\phi_{2}\equiv0 $. If $\phi_{1}\not\equiv0$, it is easy to see $\phi_{1}(x)>0 $ for all $x\in \overline\Omega$. As in \textbf{Case VIII}(a), there exists some $s$ such that 
$s(\hat{\mathcal{T}}_{\epsilon,s}^{2})\le 0$, which implies $s(\hat{\mathcal{T}}_{\epsilon,+\infty}^{2})\le 0$, a contradiction. Hence, $(\phi_{1},\phi_{2})= (0,0)$. 
\end{proof}

\begin{theorem}\label{th5.1}
	Assume that  {\rm(\textbf{A1})} and {\rm(\textbf{A2})} hold, and $\kappa$ is a constant function. Let $u(x,t):=(u_{1}(x,t),u_{2}(x,t))$ be the solution for  \eqref{eq4.12} with $u_{1,0}, u_{2,0}\ge , \not\equiv  0$, then
	 following statements hold:
	
	\begin{itemize}
		\item [\rm(i)] In {\rm\textbf{Case I}}, there holds
		\[
		\lim\limits_{t\to +\infty} (u_{1}(x,t),u_{2}(x,t))=(0,0) \quad \text{ uniformly in } \overline{\Omega};
		\]
		
		\item[\rm(ii)] In {\rm\textbf{Case III}} or {\rm\textbf{Case IX}}, there holds
		\[
		\lim\limits_{t\to +\infty} (u_{1}(x,t),u_{2}(x,t))=(0,+\infty) \quad \text{ uniformly in } \overline{\Omega};
		\]
		
		\item[\rm(iii)] In {\rm\textbf{Case IV}}, {\rm\textbf{Case V}} or {\rm\textbf{Case VI}}, there holds
		\[
		\lim\limits_{t\to +\infty} (u_{1}(x,t),u_{2}(x,t))=(+\infty,+\infty) \quad \text{ uniformly in } \overline{\Omega};
		\]

		\item[\rm(iv)] In {\rm\textbf{Case II}} or {\rm\textbf{Case VIII}(a)}, there holds
			\[
			\lim\limits_{t\to +\infty} (u_{1}(x,t),u_{2}(x,t))=(0,\phi_{2}(x)) \quad \text{ uniformly in } \overline{\Omega};
			\]
	
	   \item [\rm(v)] In {\rm\textbf{Case VII}} or {\rm\textbf{Case VIII}(b)}, there holds
	   \[
	   \lim\limits_{t\to +\infty} (u_{1}(x,t),u_{2}(x,t))=(\phi_{1}(x),\phi_{2}(x)) \quad \text{ uniformly in } \overline{\Omega}.
	   \]

	\end{itemize}

\end{theorem}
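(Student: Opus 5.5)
The plan is to exploit the triangular structure of \eqref{eq4.12}. The $u_1$-equation is autonomous except through $s(t)$. The $u_2$-equation is driven by $u_1$ through a nonnegative source term.

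\textbf{Step 1: asymptotic behaviour of $u_1$ and of $s(t)$.}
Since $\kappa$ is constant, I would use the comparison/monotone arguments of \cite{MR4612706} (Theorem 4.7 there, reproduced as Proposition \ref{pr4.8}) applied to the scalar operator $\hat{\mathcal{T}}^{1}_{\epsilon,\lambda}$. I then sort the cases by the sign of $s(\hat{\mathcal{T}}^{1}_{\epsilon,\cdot})$.

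In Cases I--III, $s(\hat{\mathcal{T}}^{1}_{\epsilon,0})<0$, which I read as $s(\hat{\mathcal{T}}^{1}_{\epsilon,\lambda})<0$ for every value of $s(t)$; this reading is formal, since \emph{strict} decrease of $\lambda\mapsto s(\mathcal{T}_{\epsilon,\lambda})$ is stated only for $\lambda\in[0,\infty)$ in Theorem \ref{th6.1}(i), and passing to $\beta(s(t))\in(0,\beta(0)]$ is what Step~4 must settle. Granting it, $u_1$ is a subsolution of a linear problem with negative spectral bound, so $u_1\to0$ exponentially.

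In Cases IV--VI, $s(\hat{\mathcal{T}}^{1}_{\epsilon,+\infty})>0$. Since $\beta$ has a positive lower bound, the linear growth rate stays positive uniformly in $s$. Using the eigenfunction-type subsolutions $\phi$ from Lemma \ref{le2.2} with $\hat{\mathcal{T}}\phi\ge \delta\phi$, I get $u_1\to+\infty$ uniformly.

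In Cases VII--IX, the first component behaves exactly as in the no-mutation scalar problem, so $u_1$ is governed by Proposition \ref{pr4.8}(iv) with one component.

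\textbf{Step 2: the $u_2$-equation, written as $\partial_t u_2=\beta(s)\hat{\mathcal{T}}^2_{\epsilon,s}u_2+g(t,x)$.}
Here $g=2\epsilon m_{21}\beta(s)\mathcal{P}_1u_1/2\ge0$ is the source term coming from $u_1$.

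If $u_1\to+\infty$, then $g\to+\infty$ uniformly and $s(t)\to\infty$. Since $-\beta(s)-\kappa$ is bounded, $u_2\to+\infty$ follows by comparison with a constant-coefficient ODE. This gives (iii).

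If $u_1\to0$, the $u_2$-equation is asymptotically autonomous. In Case I a supersolution built from Lemma \ref{le2.2} with negative rate gives $u_2\to0$, which is (i). In Case III, $s(\hat{\mathcal{T}}^2_{\epsilon,+\infty})>0$ gives blow-up, which is part of (ii). In Case II, I apply the scalar convergence result of \cite{MR4612706} for $u_2$ to the unique steady state $\phi_2$ from the lemma on \eqref{eq4.9}. This uses the theory of asymptotically autonomous semiflows and the fact that the equilibrium $(0,\phi_2)$ is unique (lemma part (ii)). This gives (iv).

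Case IX belongs in (ii), with $u_1\to0$ and $u_2\to+\infty$. Here I would argue that $u_1$ cannot stay positive: if it did, $s(t)$ would have to approach the value $s^\ast$ solving $s(\hat{\mathcal{T}}^1_{\epsilon,s^\ast})=0$. Meanwhile $u_2$ grows because $s(\hat{\mathcal{T}}^2_{\epsilon,+\infty})>0$, so $s(t)\to\infty$. Then $\hat{\mathcal{T}}^1_{\epsilon,s(t)}$ becomes eventually negative, and $u_1\to0$ exponentially.

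\textbf{Step 3: Cases VII and VIII.}
In Case VII, $u_2$ cannot sustain itself, so $s(t)$ is asymptotically determined by $u_1$ together with the forced $u_2$. I would construct a Lyapunov-type ratio argument as in \cite{MR4612706}. With $\kappa$ constant, the quantity $u_1(x,t)/\varphi_1(x)$ has monotone max/min along trajectories, and this pins $s(t)\to s^\ast=\beta^{-1}(1/\lambda^\ast_{\epsilon,1})$. Then $u_1\to\phi_1$, and $u_2\to-(\hat{\mathcal{T}}^2_{\epsilon,s^\ast})^{-1}\psi=\phi_2$.

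Case VIII(a) is like Case II. There $\lambda^\ast_{\epsilon,1}<\lambda^\ast_{\epsilon,2}$, so $u_2$ reaches its equilibrium level, at which $\hat{\mathcal{T}}^1$ is strictly negative; this forces $u_1\to0$.

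Case VIII(b) is the reverse ordering. There $u_1$ survives and $s(t)\to s^\ast_1$. The second species then sits below its own threshold but is forced by $u_1$, so the limit is the positive pair $(\phi_1,\phi_2)$ from the lemma.

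\textbf{Step 4: the main obstacle.}
I expect the hardest part to be the competition through the shared cytokine $s(t)$ in Cases VIII(a)/(b) and IX. The threshold comparisons there need a robust argument that $\limsup$ and $\liminf$ of $s(t)$ lie on the correct side of $\lambda^\ast_{\epsilon,i}$. I would try a squeezing argument: from upper and lower bounds on $s(t)$, derive exponential decay or persistence of one component via the variational characterizations (Lemma \ref{le2.2}, Proposition \ref{pr4.3}), and iterate to shrink the interval containing the limit points of $s(t)$ down to $\{s^\ast\}$.

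The same squeezing should also settle the range issue flagged in Step 1. The cases are stated through the $\lambda$-parametrised operators, while the dynamics only sees $\beta(s(t))$ for $s\ge0$. The iteration must show that the relevant spectral signs hold along the actual trajectory range of $s(t)$, not just in the limits $\lambda=0$ and $\lambda=+\infty$.
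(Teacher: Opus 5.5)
Your treatment of (i)--(iii) matches the paper. After the time change $\tau=\int_0^t\beta(s(z))\,\mathrm{d}z$, the paper compares with the linear system frozen at $\beta(0)$ (\textbf{Case I}) or at $\beta(+\infty)$ (\textbf{Cases III}, \textbf{IV}--\textbf{VI}, \textbf{IX}). In \textbf{Case IX} it uses exactly your argument: $u_2\to+\infty$, so $S$ is eventually large, so $U_1\to0$.

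The range issue you defer to Step~4 is void. The operators are indexed by the cytokine level $\lambda\in[0,\infty)$, $s(t)\ge0$, and $\lambda\mapsto s(\hat{\mathcal{T}}^i_{\epsilon,\lambda})$ is nonincreasing. Hence, for instance, $s(\hat{\mathcal{T}}^1_{\epsilon,s(t)})\le s(\hat{\mathcal{T}}^1_{\epsilon,0})<0$ for all $t$.

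The genuine gap is in \textbf{Case VIII}(a) of (iv) and in \textbf{Cases VII} and \textbf{VIII}(b) of (v), exactly where the two genotypes interact through $s(t)$. There the proposal gives no valid argument:
\begin{itemize}
\item The Step~1 claim that in \textbf{Cases VII}--\textbf{IX} the component $u_1$ is governed by the scalar Proposition~\ref{pr4.8}(iv) is false, because $s=\int_\Omega\zeta(u_1+u_2)$ feeds $u_2$ back into the $u_1$-equation. In \textbf{Cases VIII}(a) and \textbf{IX} one has $u_1\to0$. In \textbf{Case VII} the limit $\phi_1$ is normalized by $\int_\Omega\zeta(\phi_1+\phi_2)=s^\ast$, not by $\int_\Omega\zeta\phi_1=s^\ast$.
\item With $\varphi_1$ the principal eigenfunction of $\tilde{\mathcal{T}}^1_{\epsilon,\lambda^\ast_{\epsilon,1}}$, the quantity $\max_x u_1/\varphi_1$ is not monotone. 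At a maximum point you only get $\frac{d}{dt}\max\le\kappa\bigl(\beta(s(t))\lambda^\ast_{\epsilon,1}-1\bigr)\max$, whose sign is that of $s^\ast-s(t)$. Only $\max/\min$ is nonincreasing, and non-strict monotonicity of that ratio neither forces the profile to converge nor pins $s(t)$.
\item In \textbf{Case VIII}(a), the step ``$u_2$ reaches its equilibrium level'' assumes the competitive exclusion that has to be proved.
\item The squeezing iteration of Step~4 is never specified.
\end{itemize}

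The missing idea is what the hypothesis that $\kappa$ is constant is for. After the time change, $-\frac{\kappa}{\beta(S)}U$ is a spatially constant multiple of $U$, so the nonlinearity acts only on the amplitude. The paper sets $V=U/S$ and $W=e^{\int_0^\tau c(\sigma)\,\mathrm{d}\sigma-(1+\kappa/\beta(\lambda))\tau}V$, where $\lambda$ is the cytokine level of the target steady state. This gives the autonomous linear problem $\partial_\tau W=\mathcal{T}_{\epsilon,\lambda}W$.

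By the triangular structure, $s(\mathcal{T}_{\epsilon,\lambda})=\max_i s(\hat{\mathcal{T}}^i_{\epsilon,\lambda})=0$ is an isolated, algebraically simple principal eigenvalue. Its eigenfunction is $(0,\phi_2)$ when the second block dominates (\textbf{Cases II}, \textbf{VIII}(a)) and $(\phi_1,\phi_2)$ when the first does (\textbf{Cases VII}, \textbf{VIII}(b)). The spectral projection, exponential decay on the complementary subspace, and positivity of the projection give $W\to c_0\phi$ with $c_0>0$. Hence $V$ and $c(\tau)$ converge, and $S$ solves an asymptotically autonomous scalar ODE whose unique positive equilibrium attracts.

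Equivalently, $U(\tau)$ is a scalar factor times the orbit of a fixed positive linear semigroup. Which genotype survives is therefore decided by comparing the $s$-independent numbers $\lambda^\ast_{\epsilon,1}$ and $\lambda^\ast_{\epsilon,2}$. In \textbf{Case VIII}(a), $U_1$ relative to $U_2$ decays roughly like $e^{-\kappa(\lambda^\ast_{\epsilon,2}-\lambda^\ast_{\epsilon,1})\tau}$, and no squeezing is needed.

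Your asymptotically autonomous (Thieme-type) route for \textbf{Case II}, where $u_1\to0$ exponentially regardless of $s$, is plausible. It still needs compactness of orbits and exclusion of the $\omega$-limit set $\{0\}$. The paper treats \textbf{Case II} with the same normalization.
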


\begin{proof}
	Set
	\[
	\tau = \int_0^t \beta(s(z))\,\mathrm{d}z, \qquad U_i(x,\tau)=u_i(x,t),\quad i=1,2,
	\]
	and 
	\[
	S(\tau)=S_1(\tau)+S_2(\tau),\qquad S_i(\tau)=\int_\Omega \zeta(x)U_i(x,\tau)\,\mathrm{d}x.
	\]
Then 
\begin{equation}\label{eq4.13}
\begin{cases}
	\displaystyle \frac{\partial U_1}{\partial\tau}=2(1+\epsilon m_{11})\int_\Omega p_1(x,y) e^{-\mu_1(y)}U_1(y, \tau)\,\mathrm{d}y
	 -U_1-\frac{\kappa}{\beta(S)}U_1,  \\[1em]
	 \begin{aligned}
	 	\frac{\partial U_2}{\partial\tau}= & 2\int_\Omega p_2(x,y) e^{-\mu_2(y)}U_2(y, \tau)\,\mathrm{d}y-U_2-\frac{\kappa}{\beta(S)}U_2\\
	 	&+2\epsilon m_{21}\int_\Omega p_1(x,y) e^{-\mu_1(y)}U_1(y, \tau)\,\mathrm{d}y. 
	 \end{aligned}
\end{cases}
\end{equation}

(i) It is easy to see that $U :=(U_{1},U_{2})$ is a subsolution of \eqref{eq4.13} with $\beta(S)$ replaced  by $\beta(0)$. Since $s(\mathcal{T}_{\epsilon,0})=\max\limits_{i=1,2} s(\hat{\mathcal{T}}_{\epsilon,0}^{i}) < 0$, there exist $M>0$ and $\omega>0$ such that 
\[
\left\| U(\tau,\cdot)\right\|\le M e^{-\omega \tau}\left\| U(0,\cdot)\right\| \quad \text{ for all } \tau >0.
\]
Consequently,  $\lim\limits_{\tau \to +\infty}\left\| U(\tau,\cdot)\right\|=0$.  As $t\to +\infty$, we have $\tau\to +\infty$. Therefore,
\[
\lim\limits_{t\to +\infty} (u_{1}(x,t),u_{2}(x,t))=(0,0) \quad \text{ uniformly in } \overline{\Omega}.
\]

(ii) Notice that 
\[
\frac{\partial U_2}{\partial\tau}\ge  2\int_\Omega p_2(x,y) e^{-\mu_2(y)}U_2(y, \tau)\,\mathrm{d}y-U_2-\frac{\kappa}{\beta(+\infty)}U_2. 
\]
Since $s(\hat{\mathcal{T}}_{\epsilon,+\infty}^{2}) > 0$, we have $\lim\limits_{\tau \to +\infty} U_{2}(\tau ,x)= +\infty $ uniformly in $\overline{\Omega}$.  Then there exist  $T$ and $M$ such that  $S(\tau)\ge M$ for  all $\tau \ge T$  and  $s(\hat{\mathcal{T}}_{\epsilon,M}^{1}) <0$. Note that 
\[
\frac{\partial U_1}{\partial\tau}\le 2(1+\epsilon m_{11})\int_\Omega p_1(x,y) e^{-\mu_1(y)}U_1(y, \tau)\,\mathrm{d}y
-U_1-\frac{\kappa}{\beta(M)}U_1.
\]
It follows that $\lim\limits_{\tau \to +\infty} U_{1}(\tau ,x)= 0 $ uniformly in $\overline{\Omega}$. 
As $t\to +\infty$, we have $\tau\to +\infty$. Therefore,
\[
\lim\limits_{t\to +\infty} (u_{1}(x,t),u_{2}(x,t))=(0,+\infty) \quad \text{ uniformly in } \overline{\Omega}.
\]

(iii) Since $s(\hat{\mathcal{T}}_{\epsilon,+\infty}^{1}) > 0$, and 
\[
\frac{\partial U_1}{\partial\tau}\ge 2(1+\epsilon m_{11})\int_\Omega p_1(x,y) e^{-\mu_1(y)}U_1(y, \tau)\,\mathrm{d}y
-U_1-\frac{\kappa}{\beta(+\infty)}U_1,
\]
we have $\lim\limits_{\tau \to +\infty} U_{1}(\tau ,x)= +\infty $ uniformly in $\overline{\Omega}$. 
Due to 
\[
\frac{\partial U_2}{\partial\tau}\ge  2\int_\Omega p_2(x,y) e^{-\mu_2(y)}U_2(y, \tau)\,\mathrm{d}y-U_2-\frac{\kappa}{\beta(+\infty)}U_2 +  2\epsilon m_{21}\int_\Omega p_1(x,y) e^{-\mu_1(y)}U_1(y, \tau)\,\mathrm{d}y,
\]
we have 
\[
U_{2}(x,\tau)\ge e^{-(1+\frac{\kappa}{\beta(+\infty)})\tau}U_{2,0}(x)+2\epsilon m_{21}\int_{0}^{\tau}   e^{-(1+\frac{\kappa}{\beta(+\infty)})(\tau -r)}\int_\Omega p_1(x,y) e^{-\mu_1(y)}U_1(y, r)\,\mathrm{d}y\mathrm{d}r.
\]
Since $\lim\limits_{\tau \to +\infty}\int_{0}^{\tau}   e^{-(1+\frac{\kappa}{\beta(+\infty)})(\tau -r)}\int_\Omega p_1(x,y) e^{-\mu_1(y)}U_1(y, r)\,\mathrm{d}y\mathrm{d}r = +\infty$ uniformly in $\overline{\Omega}$, it follows that $\lim\limits_{\tau \to +\infty} U_{2}(\tau ,x)= +\infty $ uniformly in $\overline{\Omega}$.
As $t\to +\infty$, we have $\tau\to +\infty$. Therefore,
\[
\lim\limits_{t\to +\infty} (u_{1}(x,t),u_{2}(x,t))=(+\infty,+\infty) \quad \text{ uniformly in } \overline{\Omega};
\]

(iv)
Set  $V_i(x,\tau)=\frac{U_i(x,\tau) }{S (\tau)}$  and 
\[
  c(\tau)=2\int_{\Omega} \int_{\Omega} \zeta(x)p_1(x,y) e^{-\mu_1(y)}V_1(y, \tau)\,\mathrm{d}y \mathrm{d}x  + 2 \int_{\Omega} \int_\Omega \zeta(x)p_2(x,y) e^{-\mu_2(y)}V_2(y, \tau)\,\mathrm{d}y \mathrm{d}x 
\]
 By calculation, we have 
\begin{equation}\label{eq4.14}
	\begin{aligned}
		\frac{\mathrm{d}S}{\mathrm{d}\tau} =  \left(  c(\tau)  
		- 1 - \frac{\kappa}{\beta(S)} \right) S,
	\end{aligned}
\end{equation}
and 
\[
\begin{cases}
\displaystyle	\frac{\partial V_1}{\partial \tau} = 2(1+\epsilon m_{11}) \int_\Omega p_1(x,y) e^{-\mu_1(y)}V_1(y, \tau)\,\mathrm{d}y -  V_1c(\tau),
	\\[1em]
\displaystyle	\frac{\partial V_2}{\partial \tau} =  2 \int_\Omega p_2(x,y) e^{-\mu_2(y)}V_2(y, \tau)\,\mathrm{d}y 
	+ 2\epsilon m_{21} \int_\Omega p_1(x,y) e^{-\mu_1(y)}V_1(y, \tau)\,\mathrm{d}y-  V_2c(\tau).
\end{cases}
\]
Set 
\[
\lambda= \int_{\Omega}\zeta(x)\phi_{2}(x)\, \mathrm{d}x, \quad W_{i}(x,\tau)= e^{\int_{0}^{\tau }c(\sigma)\, \mathrm{d}\sigma-(1+\frac{\kappa}{\beta(\lambda)})\tau} V_{i}(x,\tau), \ i=1,2.
\]
Then, 
\begin{equation}
	\dfrac{\partial W}{\partial \tau}= \mathcal{T}_{\epsilon,\lambda}W,
\end{equation}
where $W=(W_{1},W_{2})$. 
Denote by $\Phi(\tau)$ the analytic semigroup on $X$ generated by $\mathcal{T}_{\epsilon,\lambda}$. 
Recall that $s(\mathcal{T}_{\epsilon,\lambda})=\max\limits_{i=1,2} s(\hat{\mathcal{T}}_{\epsilon,\lambda}^{i})$. 
Clearly, $s(\hat{\mathcal{T}}_{\epsilon,\lambda}^{1})<0$ and $s(\hat{\mathcal{T}}_{\epsilon,\lambda}^{2})=0$. 
Since $-(1+\frac{\kappa}{\beta(\lambda)})<0$, it follows that $s(\mathcal{T}_{\epsilon,\lambda})=0$ is an isolated  principal eigenvalue for $\mathcal{T}_{\epsilon,\lambda}$. 
Note that for any $\phi=(\phi_{1},\phi_{2})$ satisfying  $ \mathcal{T}_{\epsilon,\lambda}\phi=0$, we have $\phi_{1}\equiv0$ and $\hat{\mathcal{T}}_{\epsilon,\lambda}^{2}\phi_{2}=0$. According to Theorem \ref{th2.1}, we obtain that  $s(\mathcal{T}_{\epsilon,\lambda})=0$ is an isolated algebraically simple principal eigenvalue for $\mathcal{T}_{\epsilon,\lambda}$ with a principal eigenfunction $\phi=(0, \phi_{2})$.
Let $ P $ denote the corresponding spectral projection, $ X_1 := PX $ and $ X_2 := (I - P)X $. Clearly, we have
\[
X_1 = \ker  {\mathcal{T}}_{\epsilon,\lambda }  = \{c\phi \mid c \in \mathbb{C}\}, \quad 
\Phi(\tau)\psi= \psi \text{ for  all } \psi \in X_{1}, \tau \ge 0.
\]
It follows from \cite[Theorem 1.5.2]{MR610244} that $ X = X_1 \oplus X_2 $ and for   $ i = 1, 2 $, $ X_i $ is invariant under $  {\mathcal{T}}_{\epsilon,\lambda }  $.
Moreover, there exist $C > 0$ and $\eta > 0$ such that
\[
\|\Phi(\tau)|_{X_{2}}\|_X \le Ce^{-\eta\tau}
\]
for all $\tau > 0$, see \cite[ Theorem 1.5.3]{MR610244}. Rewrite 
\[
W_{0}=c_{0}\phi + \hat W_{0},
\]
where $\hat W_{0}\in X_{2}$. Then $\lim\limits_{\tau \to +\infty}\left\| W(\cdot, \tau )-  c_{0}\phi\right\|=0 $. 
Now we show that $c_{0}>0$. 
Obviously, $P\psi = \lim\limits_{\tau \to +\infty}\Phi(\tau)\psi$ for any $\psi \in X$. Then $P$ is positive. Define a function $f$ on $X$ by 
\[
f(\psi)=c, \text{ where } c \text{ satisfies } P\psi=c\psi.
\]
We claim that $f(\psi)>0$ for all $\psi\in$ Int$(X^{+})$. If not, there exists a $\psi\in$ Int$(X^{+})$ such that $f(\psi)=0$. Hence, there exists $\delta>0$ such that $\psi\pm\delta\eta\in X^{+}$ for all $\eta\in X$ with $\left\| \eta\right\|=1 $. Then $\pm f(\eta)\ge 0$, which means $f\equiv0$, a contradiction.

If $c_{0}=0$, then $W(\cdot, \tau )\to 0$ as $\tau \to +\infty$. Since $u_{1,0}, u_{2,0}\ge , \not\equiv 0$, then $u(\cdot,t)\in$ Int$(X^{+})$  for any $t>0$, and similarly $W(\cdot,\tau)\in$ Int$(X^{+})$. Then for a $\tau_{0}>0$, we can write $W(\cdot, \tau_{0})= c'\phi+ \hat W(\cdot, \tau_{0})$, where $\hat W(\cdot, \tau_{0})\in X_{2}$ and $c'>0$. It follows that $W(\cdot, \tau )\to c'\phi$ as $\tau \to +\infty$, a contradiction. Therefore, $c_{0}>0$. 

Due to 
\[
V_{i}(x,\tau)= \dfrac{W_{i}(x,\tau)}{\int_{\Omega}\zeta(x)\left(W_{1}(x,\tau) +W_{2}(x,\tau)\right)\, \mathrm{d}x }, 
\]
we have 
\[
\lim\limits_{\tau \to +\infty} V(\cdot, \tau )= \frac{\phi}{\int_{\Omega}\zeta(x)\phi_{2}(x ) \, \mathrm{d}x }, 
\]
and 
\[
\lim\limits_{\tau \to+\infty} c(\tau )= \frac{2 \int_{\Omega} \int_\Omega \zeta(x)p_2(x,y) e^{-\mu_2(y)}\phi_2(y, \tau)\,\mathrm{d}y \mathrm{d}x }{\int_{\Omega}\zeta(x)\phi_{2}(x ) \, \mathrm{d}x }. 
\]
Consider the limiting system for \eqref{eq4.14} as follows:
\begin{equation}\label{eq4.15}
 \frac{\mathrm{d}S}{\mathrm{d}\tau} =  \left(  \frac{2 \int_{\Omega} \int_\Omega \zeta(x)p_2(x,y) e^{-\mu_2(y)}\phi_2(y, \tau)\,\mathrm{d}y \mathrm{d}x }{\int_{\Omega}\zeta(x)\phi_{2}(x ) \, \mathrm{d}x }
 - 1 - \frac{\kappa}{\beta(S)} \right) S.  
\end{equation}
It is easy to verify  that system  \eqref{eq4.15} admits a unique positive steady state 
\[
S_{0}=\int_{\Omega}\zeta(x)\phi_{2}(x ) \, \mathrm{d}x,
\]
and it is asymptotically stable on $S>0$. By the theory of asymptotically autonomous semiflows (see \cite{MR1175102}) or the theory of chain
transitive sets (see \cite[Section 1.2.1]{MR3643081}), or by a similar discussion as in \cite[Theorem 3.5]{MR4612706}
we have 
\[
\lim\limits_{\tau\to +\infty}S(\tau)= \int_{\Omega}\zeta(x)\phi_{2}(x ) \, \mathrm{d}x.
\]
Therefore, we obtain that
\[
	\lim\limits_{t\to +\infty} (u_{1}(x,t),u_{2}(x,t))=(0,\phi_{2}(x)) \quad \text{ uniformly in } \overline{\Omega}.
\]

(v) By a similar argument of (iv) where $\lambda= \int_{\Omega}\zeta(x)\left( \phi_{1}(x)+\phi_{2}(x)\right) \, \mathrm{d}x$ and $\phi=(\phi_{1}, \phi_{2})$, we can obtain the desired conclusion.
\end{proof}

From the above conclusions, we know that $s(\mathcal{T}_{\lambda}^{i})$ (resp. $s(\hat{\mathcal{T}}_{\epsilon,\lambda}^{i}) $) plays an important role in the dynamical behavior of the system \eqref{eq4.3}. In \cite{MR4612706}, Su et al. provide some criteria for directly comparing $s(\mathcal{T}_{0}^{i})$ and $s(\mathcal{T}_{+\infty}^{i})$ with $0$ for the case $p(x,y) = p_1(x)p_2(y)$. In view of the definition of $\mathcal{T}_{\lambda}^{i}$, we can first compute the unique  $\lambda^{\ast}_{i}$ such that $s(\tilde{\mathcal{T}}_{\lambda^{\ast}_{i}}^{i}) = 0$, then compare $\lambda^{\ast}_{i}$ with the range of $\alpha$, and thereby obtain the dynamical behavior of the system.

Next, we focus on how to numerically compute the unique  $\lambda^{\ast}_{i}$ such that $s(\tilde{\mathcal{T}}_{\lambda^{\ast}_{i}}^{i}) = 0$ in the general case. According to Proposition \ref{pr4.3}, there exists a unique pair $(\lambda^{\ast}_{i}, \phi_{i})$ such that 
\[
\left[ \mathcal{P}_i \phi_{i}\right] (x) -\phi_{i}(x)- \lambda_{i}^{\ast} \kappa_{i}(x)\phi_{i}(x)=0, \quad x\in \overline{\Omega}. 
\]
Define $\Gamma_{\lambda}:  Y \to Y$ as 
\[
\left[ \Gamma_{\lambda}\phi\right] (x) = \dfrac{\left[\mathcal{P}_i \phi_{i}\right] (x)}{1+\lambda\kappa_{i}(x)}.
\]
It is easy to verify that  $\Gamma_{\lambda}$ is a compact operator on $Y$ for each $\lambda>0$, and that $\Gamma_{\lambda}$ is strictly decreasing in $\lambda$ for $\lambda > 0$. Moreover,  $\lambda^{\ast}_{i}$ is the unique positive constant satisfying  $r(\Gamma_{\lambda^{\ast}_{i}})=1$.

Choose $\phi_{0}\in$ Int$(Y^{+})$ and define 
\[
a_{n}=\left\| \Gamma_{\lambda}\phi_{n-1}\right\|, \quad \phi_{n}= \frac{\Gamma_{\lambda}\phi_{n-1}}{a_{n}}.
\]
According to  \cite[Lemma 2.5]{MR3992071}, if $\lim\limits_{n \to +\infty} a_{n}$ exists, then $r(\Gamma_{\lambda})=\lim\limits_{n \to +\infty} a_{n}$. Next, we show that $\lim\limits_{n \to +\infty} a_{n}$ exists and is independent of $\phi_{0}$.  For convenience, let $\rho = r(\Gamma_{\lambda})$ and, by the Krein--Rutman theorem, let $u$ be a strictly positive function satisfying $\Gamma_{\lambda} u = \rho u$.

\begin{proposition}
	Assume that {\rm (\textbf{A1})} holds. For any $\phi_{0}\in \mathrm{Int}(Y^{+})$,   $\lim\limits_{n\to\infty} a_n = \rho$.
\end{proposition}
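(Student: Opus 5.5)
We have to show that the power iteration $a_n=\|\Gamma_\lambda\phi_{n-1}\|$, $\phi_n=\Gamma_\lambda\phi_{n-1}/a_n$ satisfies $a_n\to\rho=r(\Gamma_\lambda)$ for every $\phi_0\in\mathrm{Int}(Y^{+})$. The plan has two parts: a sandwich argument that gives exponential control of the iterates, and a spectral-gap argument that upgrades this to convergence of the ratios $a_n$.

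\emph{Sandwich and exponential growth.} First note that $\phi_n=\Gamma_\lambda^n\phi_0/\|\Gamma_\lambda^n\phi_0\|$, so that
\[
a_n=\frac{\|\Gamma_\lambda^n\phi_0\|}{\|\Gamma_\lambda^{n-1}\phi_0\|}.
\]
Since $\phi_0\in\mathrm{Int}(Y^{+})$ and $u$ is strictly positive and continuous on the compact set $\overline\Omega$, there are constants $0<c_1\le c_2$ with $c_1u\le\phi_0\le c_2u$. Positivity of $\Gamma_\lambda$ then gives
\[
c_1\rho^n u\le\Gamma_\lambda^n\phi_0\le c_2\rho^n u .
\]
Hence $\|\Gamma_\lambda^n\phi_0\|$ is comparable to $\rho^n$. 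This already yields $\lim\|\Gamma_\lambda^n\phi_0\|^{1/n}=\rho$ and $\liminf a_n\le\rho\le\limsup a_n$, but not convergence of the ratios themselves.

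\emph{Spectral gap and decomposition.} To get convergence of $a_n$, I would use the spectral gap from Lemma \ref{le2.1} applied to the scalar operator $\Gamma_\lambda$ on $Y$.
\begin{itemize}
\item $\Gamma_\lambda$ is compact, so $r_e(\Gamma_\lambda)=0$.
\item $\rho>0$: since $p_i(x,x)>0$ and $p_i$ is continuous, $\Gamma_\lambda$ is eventually strongly positive in the sense used in the proof of Proposition \ref{pr3.1} (the scalar case $n=1$). Positivity of $\rho$ then follows, for example, from the sandwich above, since $\Gamma_\lambda u>0$ forces $\rho u\neq0$.
\item Lemma \ref{le2.1} therefore applies: $\rho$ is algebraically simple with eigenfunction $u\in\mathrm{Int}(Y^{+})$, and every other $\mu\in\sigma(\Gamma_\lambda)$ satisfies $|\mu|<\rho$.
\end{itemize}
Let $P$ be the Riesz spectral projection onto $\mathrm{span}\{u\}$ and write $P\psi=\ell(\psi)u$. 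On the complementary invariant subspace $(I-P)Y$, the restricted operator has spectral radius $q<\rho$. Gelfand's formula then gives, for some $q'\in(q,\rho)$ and a constant $C$,
\[
\|\Gamma_\lambda^n(I-P)\phi_0\|\le C q'^n .
\]
Consequently
\[
\Gamma_\lambda^n\phi_0=\ell(\phi_0)\rho^n u+O(q'^n).
\]

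\emph{Main obstacle: $\ell(\phi_0)>0$.} The step I expect to be hardest is showing $\ell(\phi_0)>0$; otherwise the leading term could vanish. Once it holds, $\|\Gamma_\lambda^n\phi_0\|=\ell(\phi_0)\rho^n\|u\|\,(1+O((q'/\rho)^n))$, and therefore $a_n\to\rho$. To prove $\ell(\phi_0)>0$ I would use the lower sandwich bound.
\begin{itemize}
\item Dividing the decomposition by $\rho^n$ gives $\rho^{-n}\Gamma_\lambda^n\phi_0\to\ell(\phi_0)u$.
\item The lower bound $\rho^{-n}\Gamma_\lambda^n\phi_0\ge c_1u$ passes to the limit, giving $\ell(\phi_0)u\ge c_1u$.
\item Hence $\ell(\phi_0)\ge c_1>0$.
\end{itemize}
This avoids the separate positivity-of-projection argument used in the proof of Theorem \ref{th5.1}(iv). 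The limit $\rho$ is clearly independent of $\phi_0$, and combined with \cite[Lemma 2.5]{MR3992071} this justifies the numerical scheme.
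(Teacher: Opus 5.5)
Your proof is correct and follows essentially the paper's route. Both arguments use the spectral projection $P$ onto $\mathrm{span}\{u\}$, the splitting $\Gamma_\lambda^n\phi_0=\rho^nP\phi_0+\Gamma_\lambda^n(I-P)\phi_0$ with a Gelfand bound on the remainder, positivity of the coefficient of $u$, and then the ratio limit. Two small differences are worth noting. You justify the algebraic simplicity and strict dominance of $\rho$ explicitly via Lemma \ref{le2.1}, which the paper uses without comment when it writes $\Gamma_\lambda P=\rho P$ and $r(N)<\rho$. You also obtain $\ell(\phi_0)\ge c_1>0$ directly by passing to the limit in $\rho^{-n}\Gamma_\lambda^n\phi_0\ge c_1u$, rather than reusing the interior-point functional argument of Theorem \ref{th5.1}(iv); this is cleaner and gives an explicit lower bound.
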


\begin{proof}
	
	Let $P$ denote the projections associated with $\left\lbrace \rho\right\rbrace $, and set $Y_{1}:= PY$ and $Y_{2}:=(I-P)Y$. Then, we have 
	\[
	P^2=P, \quad \Gamma_\lambda P = P\Gamma_\lambda = \rho P.
	\]
	By \cite[Theorem 1.5.2]{MR610244},  $ Y = Y_1 \oplus Y_2 $ and each $ Y_i $ is invariant under $  \Gamma_\lambda  $;  moreover, $\sigma(\Gamma_{\lambda}\mid_{Y_{2}})= \sigma(\Gamma_{\lambda})\setminus\left\lbrace \rho \right\rbrace $.

	Define $N = \Gamma_\lambda - \rho P$. It follows that  $PN = NP = 0$ and $\sigma(N)=\left( \sigma(\Gamma_{\lambda})\setminus\left\lbrace \rho \right\rbrace \right) \cup\left\lbrace0 \right\rbrace $,   so in particular  $r(N) < \rho$. 
	From $PN=NP=0$, we obtain, for every $n\in\mathbb{N}$:
	\[
	\Gamma_\lambda^n = (\rho P + N)^n = \rho^n P + N^n.
	\]

	Now fix any $\eta$ such that $r(N) < \eta < \rho$. By Gelfand's formula, $r(N)=\lim\limits_{n\to\infty}\left\| N^n\right\|^{\frac{1}{n}}$. Hence, 
    there exists $n_0\in\mathbb{N}$ such that $\left\| N^n\right\|^{\frac{1}{n}}\le \eta$ for all $n\ge n_0$, i.e. $\|N^n\|\le \eta^n$ for $n\ge n_0$. 
	For $1\le n<n_0$, set $C_1 = \max\limits_{1\le n<n_0}\frac{\left\| N^n\right\| }{\eta^n}$ and let $C = \max\{C_1,1\}$; then $\left\| N^n\right\| \le C\eta^n$ holds for every $n\ge 1$.
	
	By the same argument as in   Theorem \ref{th5.1}(iv),  for any $\phi_{0}\in \mathrm{Int}(Y^{+})$, there exists $c>0$ such that  $P\phi_0 = c u$. 
	Applying $\Gamma_\lambda^n$ to $\phi_0$ gives
	\[
	\Gamma_\lambda^n \phi_0 = \rho^n P\phi_0 + N^n\phi_0 = c\rho^n u + N^n\phi_0.
	\]
	From the definition of the iteration we have $\Gamma_\lambda^n\phi_0 = \left( \prod_{k=1}^n a_k\right) \phi_n$ and $\left\| \phi_n\right\| =1$. Thus $\prod_{k=1}^n a_k = \left\| \Gamma_\lambda^n\phi_0\right\| $. 
   Consequently,
	\[
	a_n = \frac{\left\| \Gamma_\lambda^n\phi_0\right\| }{\left\| \Gamma_\lambda^{n-1}\phi_0\right\| }=\frac{\left\| c\rho^n u + N^n\phi_0 \right\| }{\left\| c\rho^{n-1} u + N^{n-1}\phi_0\right\| }
	= \rho \frac{\left\| c u + \frac{1}{\rho^{n}}N^n\phi_0 \right\|}{\left\| c  u +  \frac{1}{\rho^{n-1}}N^{n-1}\phi_0\right\|}.
	\]
	Since $\eta<\rho$, we have $\frac{1}{\rho^{n}}N^n\phi_0 \to 0$ as $n\to\infty$. 
	Therefore,
	\[
	\lim_{n\to\infty} a_n = \rho \frac{c \left\| u\right\| }{c\left\| u\right\| } = \rho.
	\]
	This completes the proof. 
\end{proof}

At the end of this section, we select suitable parameters to numerically simulate the dynamics of system \eqref{eq4.12} in \textbf{Cases II} and \textbf{VII}, as follows:
\begin{gather*}
\Omega=(0,1), \quad m_{11}=-1, \quad m_{21}=1, \quad \epsilon=0.01, \quad \kappa=1, \quad \beta(s)=1+9e^{-s}, \\[1mm]
\mu_1(x)=\bar{\mu}_1 + 0.05\sin(2\pi x), \quad 
\mu_2(x)=\bar{\mu}_2 + 0.05\cos(2\pi x), \quad 
 \zeta(x)=1+ \sin (\pi x), \\[1mm]
\quad p_{1}(x,y)= 10 \max \Bigl( 0, 1- 10\min (\left| x-y \right|, 1- \left| x-y \right|)\Bigr), \\[1mm]
p_{2}(x,y)=\frac{20}{3}\max \left( 0, 1- \frac{20}{3}\min (\left| x-y \right|, 1- \left| x-y \right|)\right). 
\end{gather*}
In the following, we consider two parameter sets: case (1) $\bar{\mu}_1=1.0,\ \bar{\mu}_2=0.2$; case (2) $\bar{\mu}_1=0.4,\ \bar{\mu}_2=0.8$.

\renewcommand{\figurename}{Fig.}
\begin{figure}[htbp]
	\centering
	\subcaptionbox{}{\includegraphics[width=0.45\textwidth]{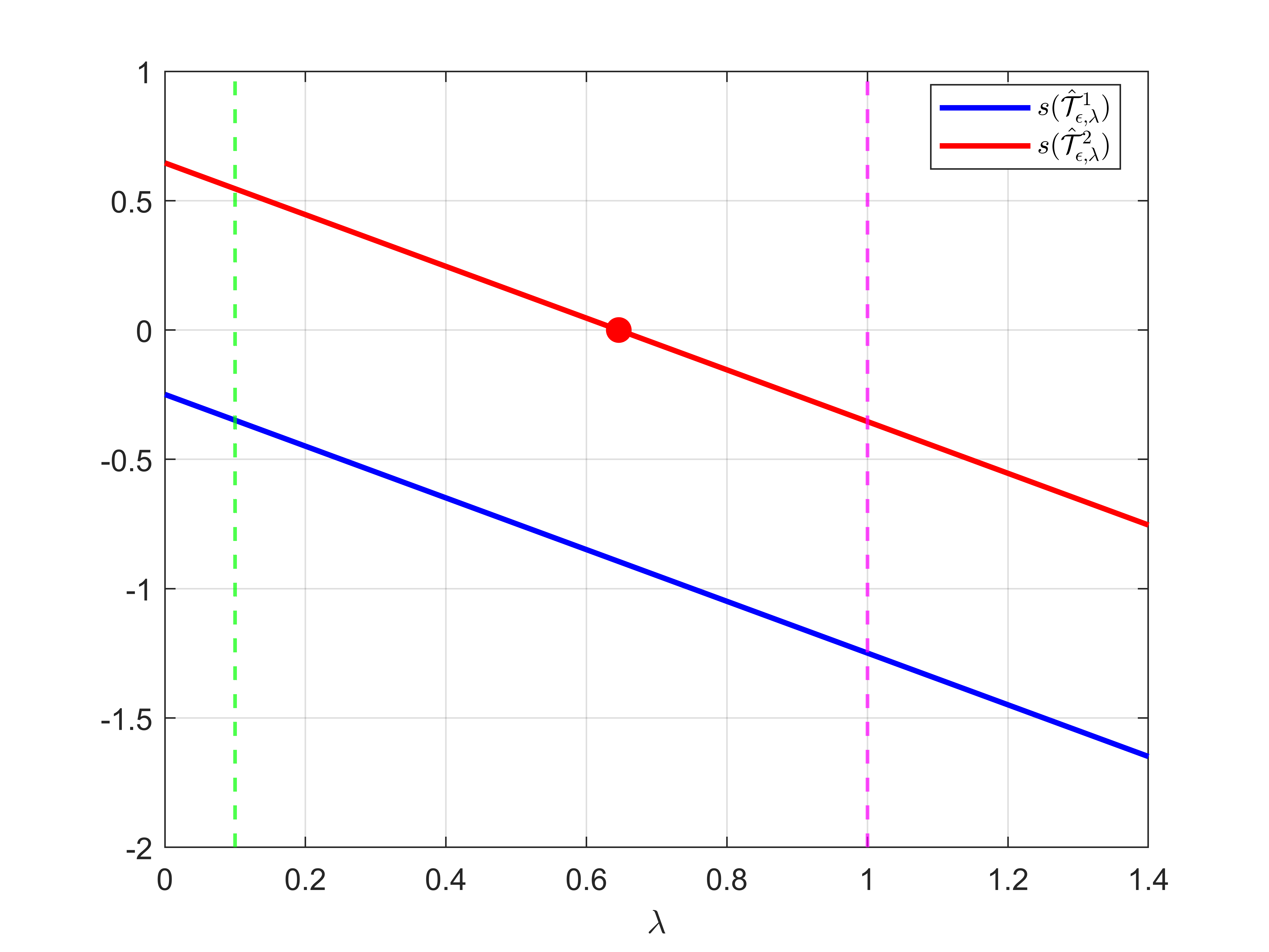}}%
	\hfill
	\subcaptionbox{}{\includegraphics[width=0.45\textwidth]{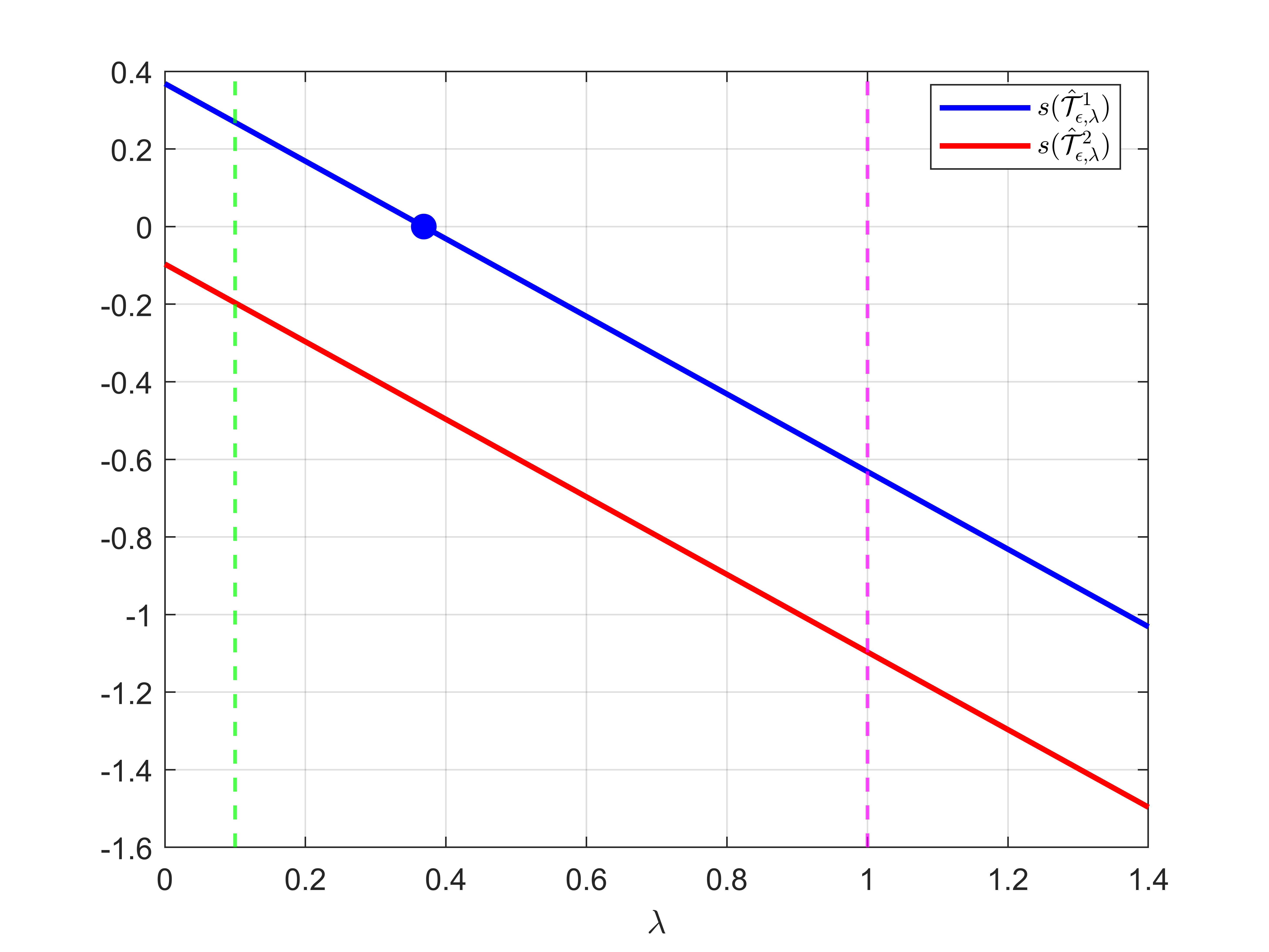}}
	\caption{Plots of \( s(\tilde{\mathcal{T}}_{\epsilon,\lambda}^{i}) \): (a) case (1); (b) case (2)}
	\label{fig1}
\end{figure}

\begin{figure}[htbp]
	\centering
	\subcaptionbox{}{\includegraphics[width=0.45\textwidth]{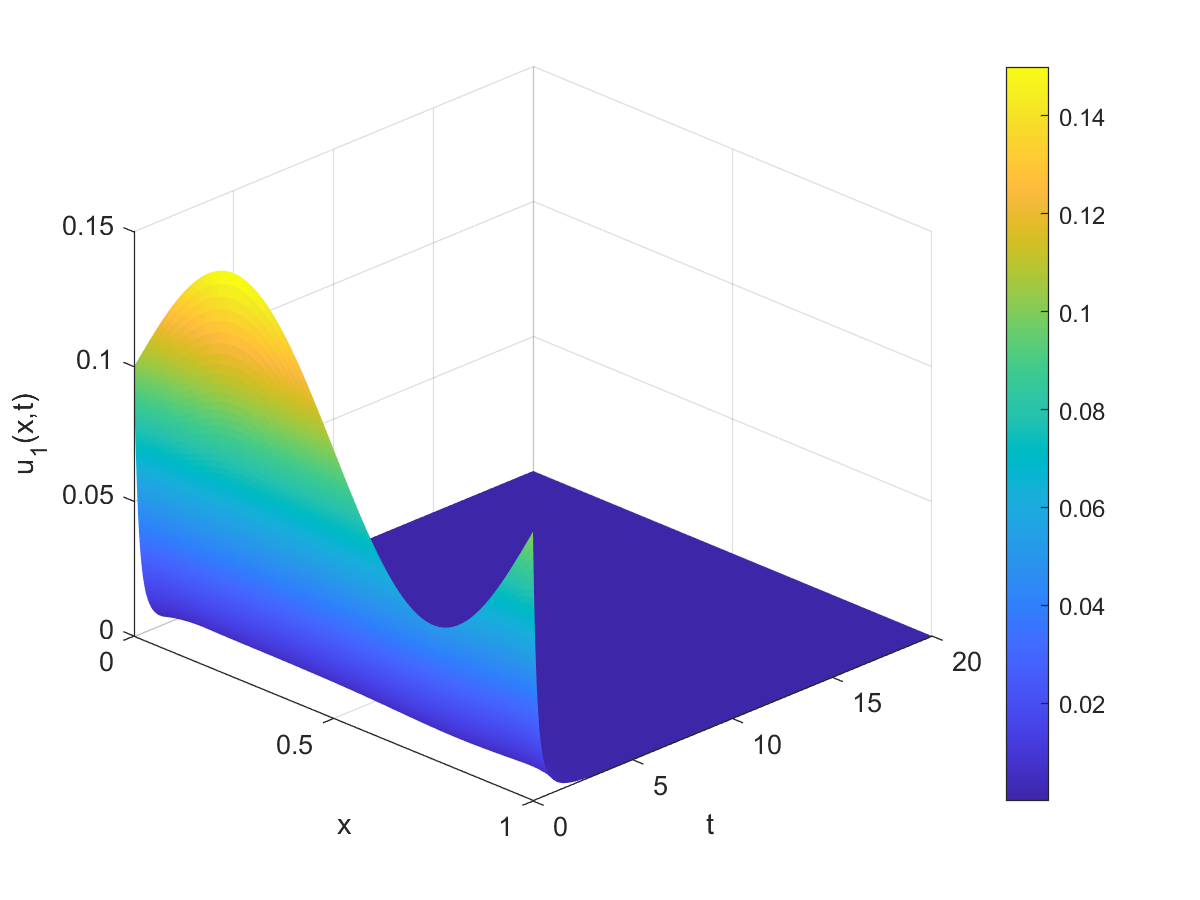}}%
	\hfill
	\subcaptionbox{}{\includegraphics[width=0.45\textwidth]{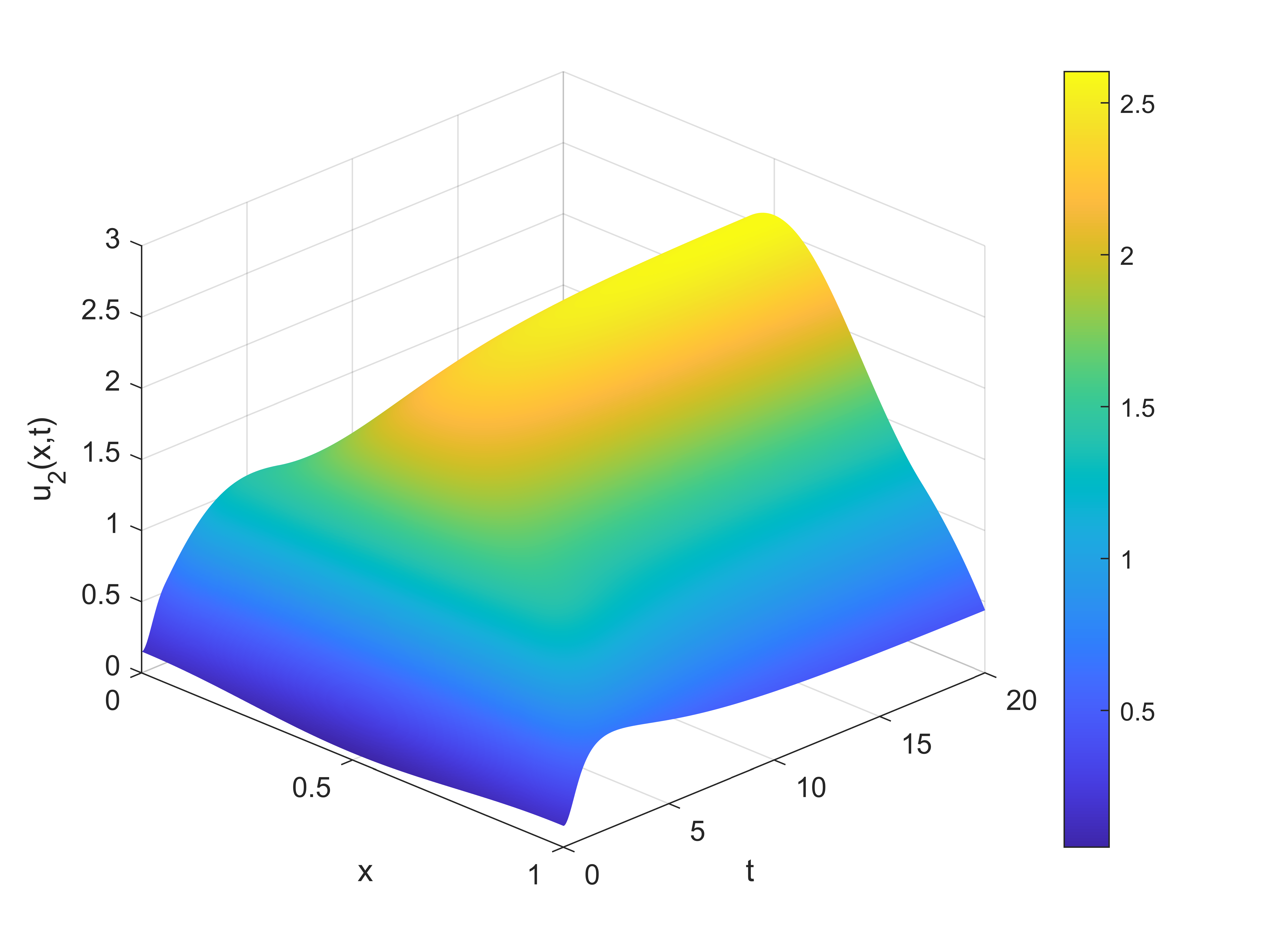}}%
	\\[0.5cm] 
	\subcaptionbox{}{\includegraphics[width=0.45\textwidth]{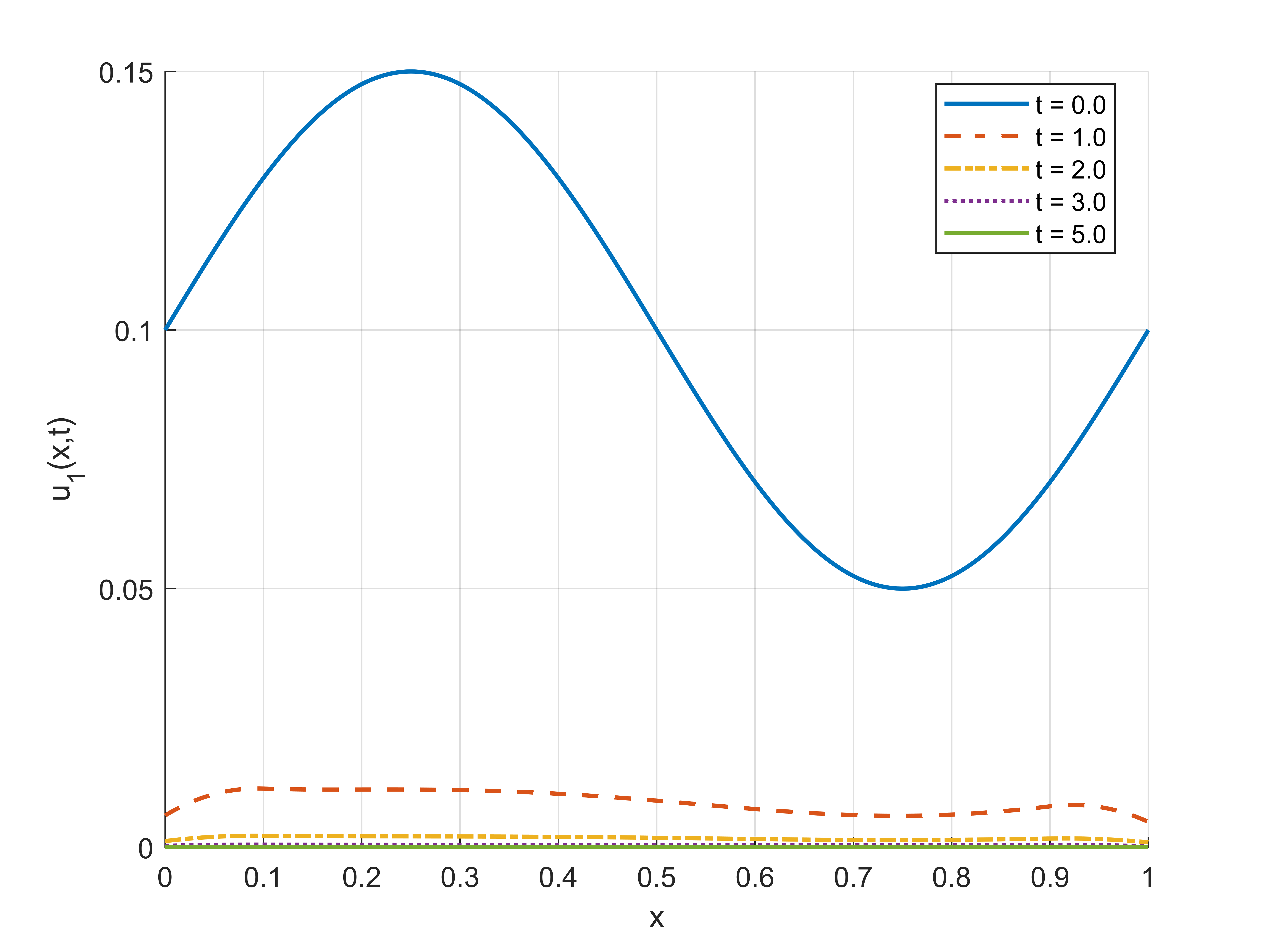}}%
	\hfill
	\subcaptionbox{}{\includegraphics[width=0.45\textwidth]{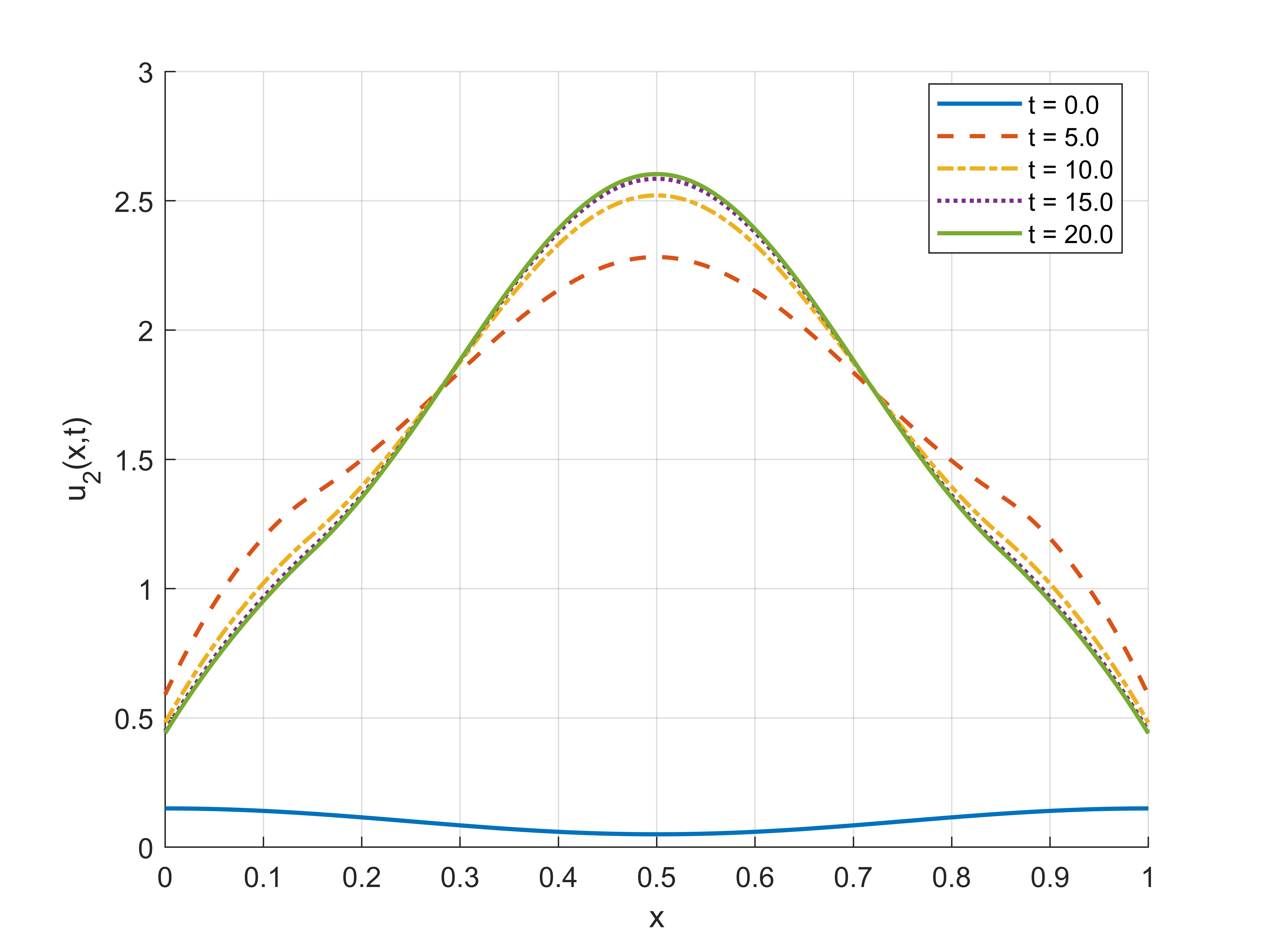}}%
	\caption{Dynamics of system \eqref{eq4.12} in case (1)(\textbf{Case II})}
	\label{fig2}
\end{figure}

From the numerical results shown in Fig. \ref{fig1}, since $\frac{1}{\beta(0)}=0.1$ and $\frac{1}{\beta(+\infty)}=1$, we obtain that in case (1): 
$s(\hat{T}_{\epsilon,0}^{1}) < 0$ and 
$s(\hat{T}_{\epsilon,+\infty}^{2}) < 0 < s(\hat{T}_{\epsilon,0}^{2})$, 
which corresponds to \textbf{Case II}; and in case (2): 
$s(\hat{T}_{\epsilon,+\infty}^{1}) < 0 < s(\hat{T}_{\epsilon,0}^{1})$ and $s(\hat{T}_{\epsilon,0}^{2}) < 0$, which corresponds to \textbf{Case VII}.

We choose the initial values as
\[
u_1(x,0)=0.1+0.05\sin(2\pi x),\quad
u_2(x,0)=0.1+0.05\cos(2\pi x),\quad x\in[0,1].
\]
From Figs. \ref{fig2} and \ref{fig3}, we observe that the numerical simulation results are fully consistent with our theoretical proofs. Specifically, in case (1) (\textbf{Case II}), $u_1$ rapidly approaches zero, while $u_2$ converges to a positive steady state. In case (2) (\textbf{Case VII}), both $u_1$ and $u_2$ converge to positive steady states.

\begin{figure}[htbp]
	\centering
	\subcaptionbox{}{\includegraphics[width=0.45\textwidth]{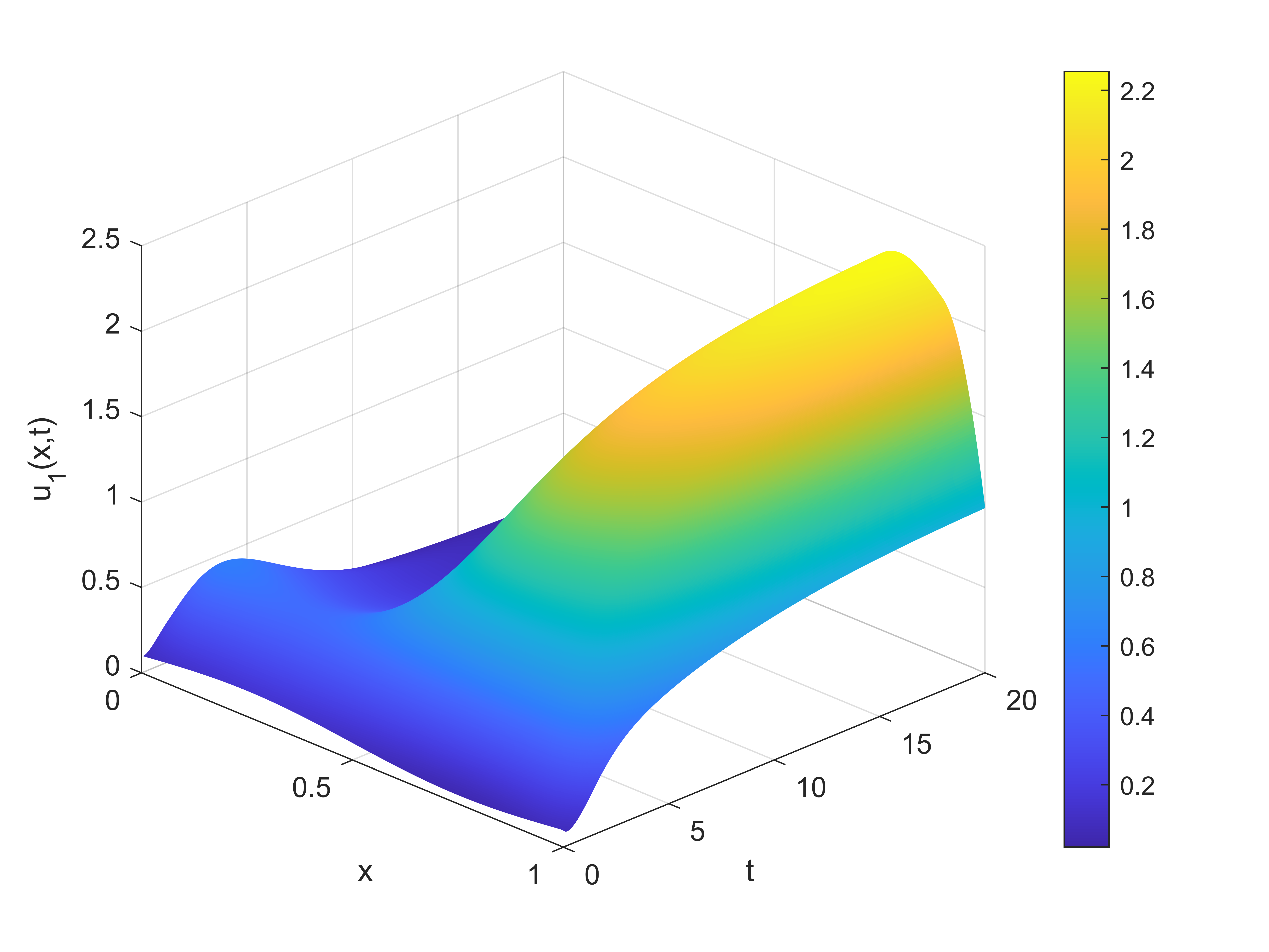}}%
	\hfill
	\subcaptionbox{}{\includegraphics[width=0.45\textwidth]{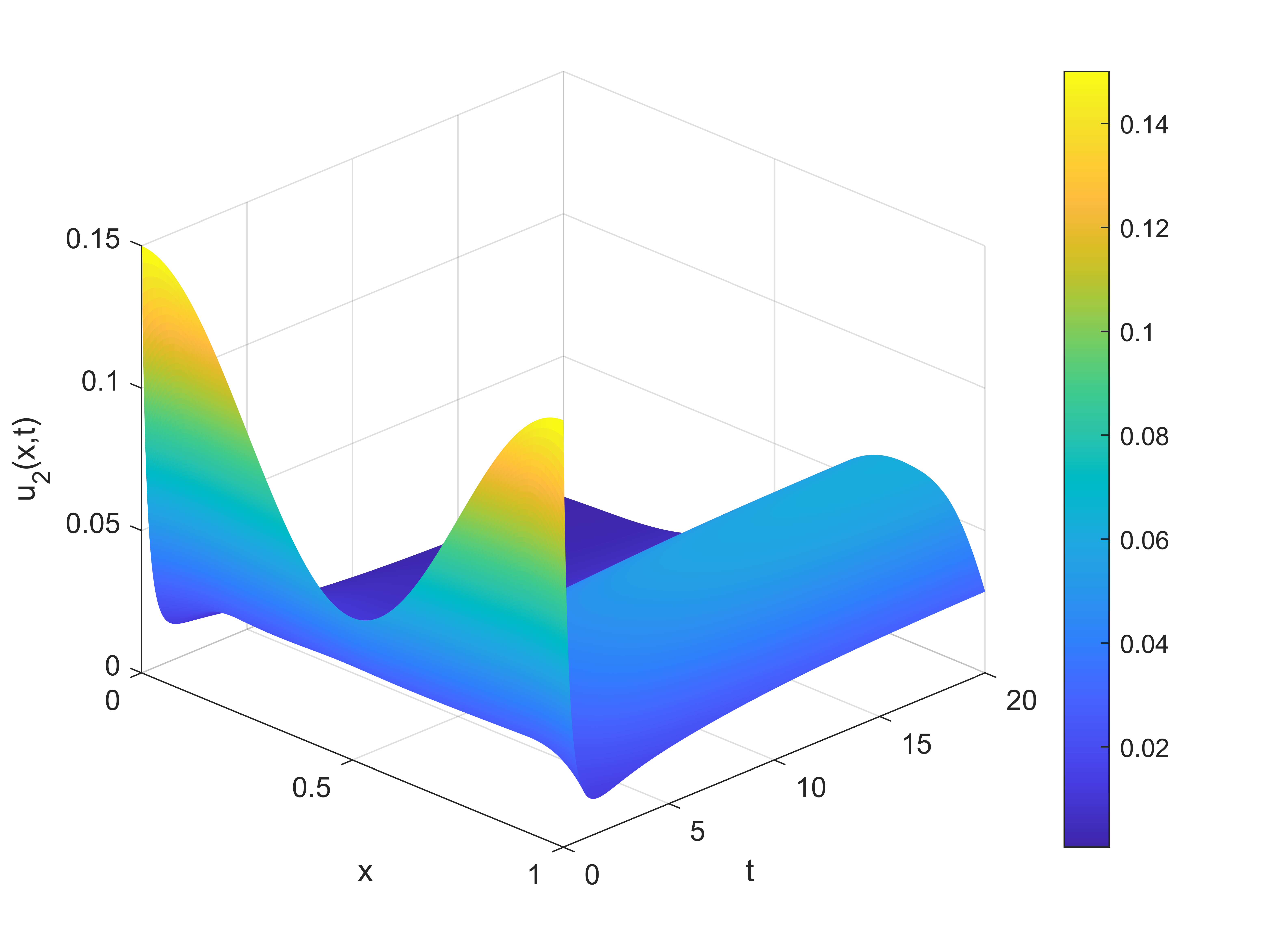}}%
	\\[0.5cm] 
	\subcaptionbox{}{\includegraphics[width=0.45\textwidth]{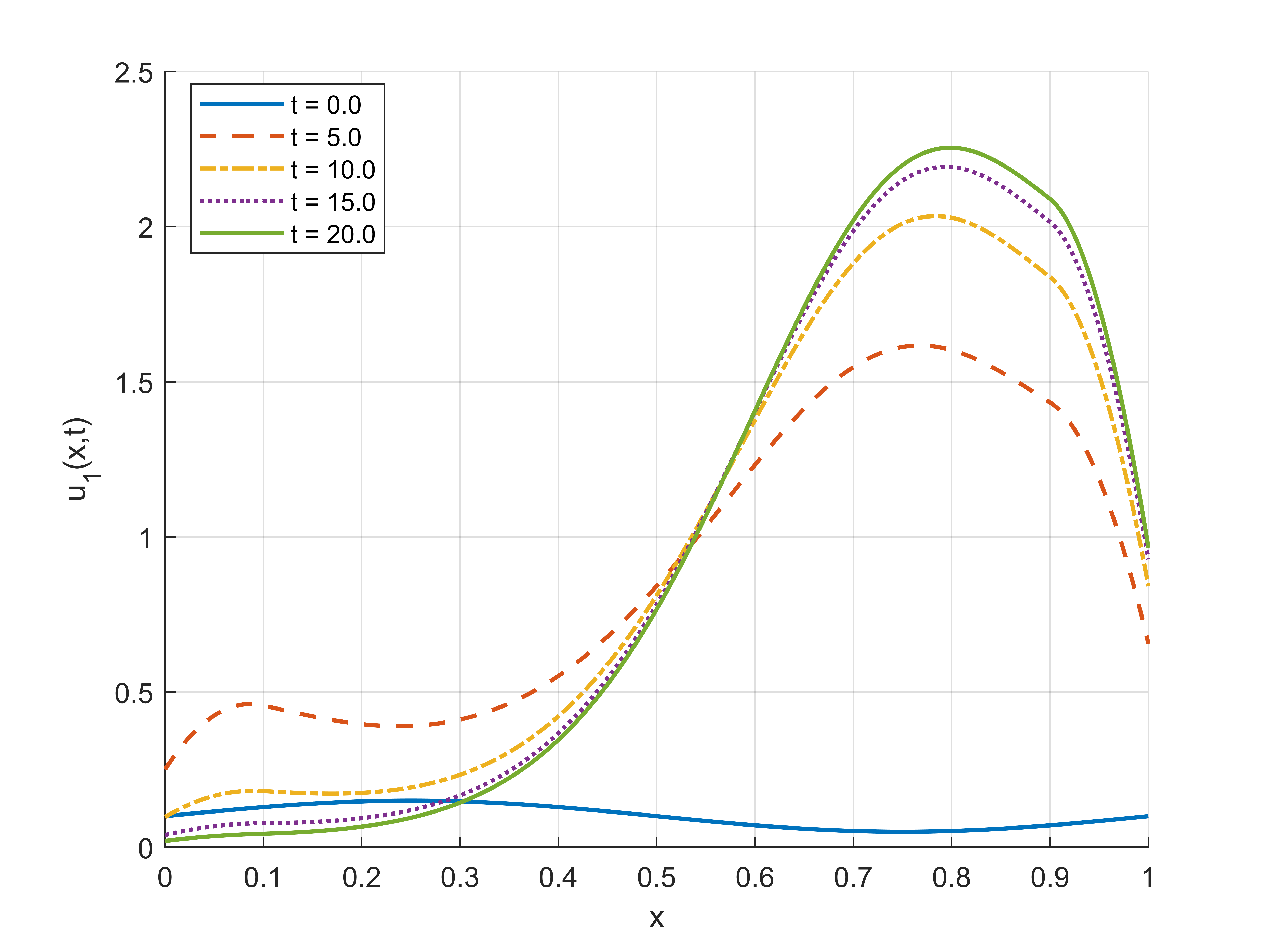}}%
	\hfill
	\subcaptionbox{}{\includegraphics[width=0.45\textwidth]{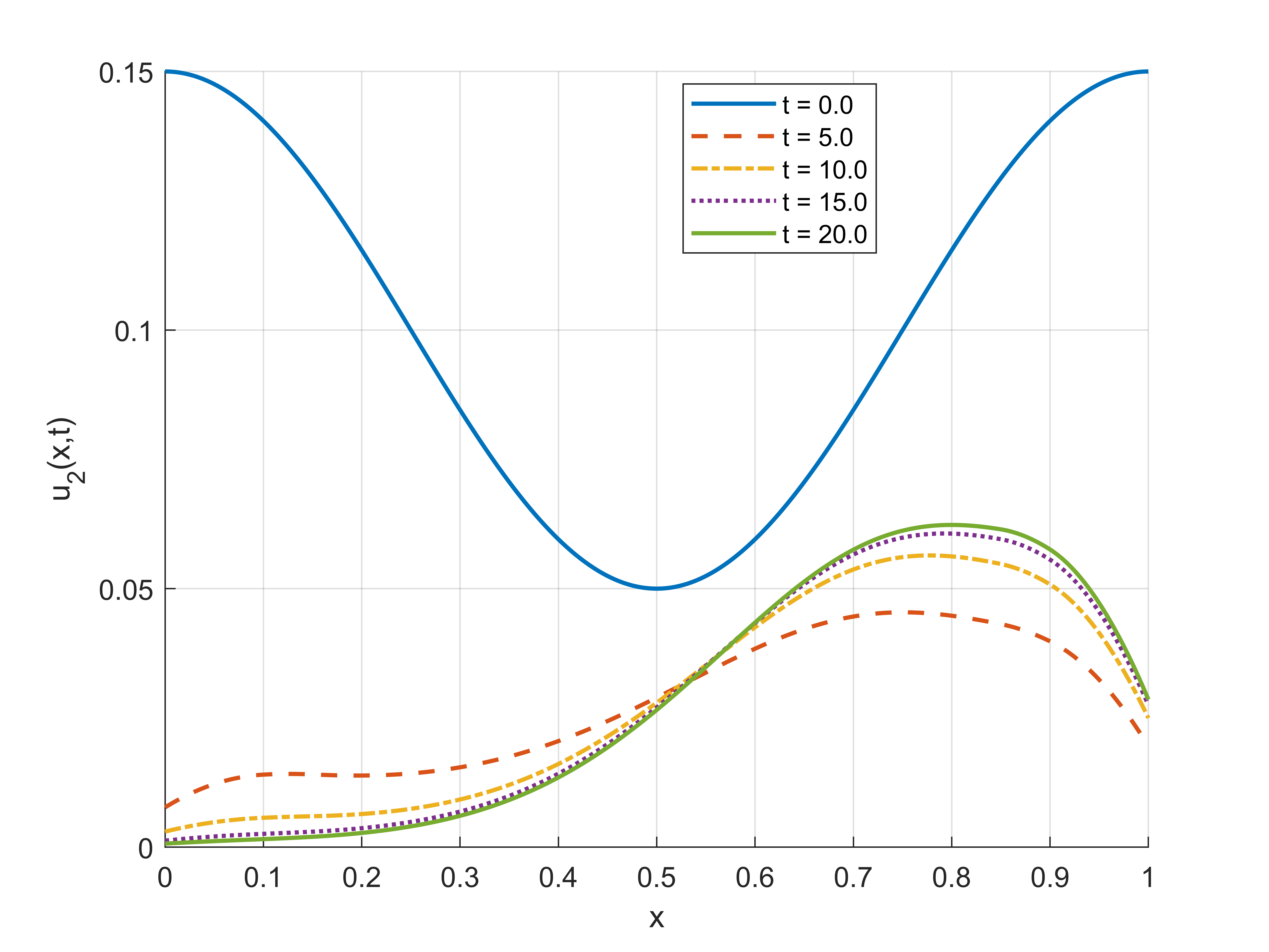}}%
	\caption{Dynamics of system \eqref{eq4.12} in case (2)(\textbf{Case VII})}
	\label{fig3}
\end{figure}

\section*{Availability of data and material}
Not applicable.

\section*{Competing interests}
The authors declare that they have no competing interests.

\section*{Acknowledgments}
This work  was supported  by the National Natural Science Foundation of China (Nos. 12471176 and 12071491) and Guangdong Basic and Applied Basic Research Foundation (No. 2025A1515012221).  X. Lin was additionally supported  by the China Postdoctoral Science Foundation (No. 2025M783159).


\end{document}